%--------------------------------------------------
% AMS-LaTeX Paper ************************************************************
% **** -----------------------------------------------

%%%%%%%%%%%%%%%%%%%%%%%%%%%%%%%%%%%%%%%%%%%%%%%%%%%%%%%%%%%%%%%%%%%%%%%%%%%%%%
\documentclass[10pt]{amsart}
\usepackage{a4} 
\usepackage{amssymb}
\usepackage{array}
\usepackage{booktabs}
\usepackage{multirow}
\usepackage{hhline}

\pagestyle{plain}
%\usepackage[active]{srcltx}
%\oddsidemargin 0pt \evensidemargin 0pt
%double space command
% Please comment for usual line space
%\renewcommand{\baselinestretch}{2}

\vfuzz2pt % Don't report over-full v-boxes if over-edge is small
\hfuzz2pt % Don't report over-full h-boxes if over-edge is small

% THEOREMS ---------------------------------------------------------------

  % not numbered

\newtheorem{thm}{Theorem}

\newtheorem{lem}[thm]{Lemma}
\newtheorem{prop}[thm]{Proposition}
\newtheorem{rem}[thm]{Remark}

\numberwithin{thm}{section}
\numberwithin{equation}{section}
% MATH -------------------------------------------------------------------
\newcommand{\Real}{\mathbb R}

\newcommand{\norm}[1]{\left\Vert#1\right\Vert}
\newcommand{\abs}[1]{\left\vert#1\right\vert}

\newcommand{\eps}{\varepsilon}

\newcommand{\la}{\langle}
\newcommand{\ra}{\rangle}

\newcommand{\A}{\mathcal{A}}

\newcommand{\F}{\mathcal{F}}

\newcommand{\Hi}{\mathcal{H}}
\newcommand{\I}{\mathcal{I}}

\newcommand{\M}{\mathcal{M}}
\newcommand{\n}{\mathbb{N}}
\newcommand{\N}{\mathcal{N}}

\newcommand{\U}{\mathcal{U}}

\newcommand{\Com}{\mathbb{C}}

\newcommand{\Om}{\Omega}

%%% ----------------------------------------------------------------------
\begin{document}
\title[$q$-Hypercontractivity]
{Hypercontractivity on the $q$-Araki-Woods algebras}
\author{Hun Hee Lee}
\author{\'Eric Ricard*}

\address{Hun Hee Lee :
Department of Mathematics, Chungbuk National University,
410 Sungbong-Ro, Heungduk-Gu, Cheongju 361-763, Korea}
\email{hhlee@chungbuk.ac.kr}
\address{ \'Eric Ricard:
Laboratoire de Math\'ematique, Universit\'e de Franche-Comt\'e,
16 Route de Gray, 25030 Besan{\c c}on, France}
\email{eric.ricard@univ-fcomte.fr}

\keywords{Hypercontractivity, quantum probability, $q$-Gaussian, Araki-woods factor, CAR, CCR}
\thanks{2000 \it{Mathematics Subject Classification}.
\rm{Primary 47L25, Secondary 46B07}}
\thanks{*Research partially supported by ANR grant 06-BLAN-0015}

\begin{abstract}
Extending a work of Carlen and Lieb, Biane has obtained the optimal
hypercontractivity of the $q$-Ornstein-Uhlenbeck semigroup on the
$q$-deformation of the free group algebra. In this note, we look for
an extension of this result to the type III situation, that is for the
$q$-Araki-Woods algebras. We show that hypercontractivity from $L^p$
to $L^2$ can occur if and only if the generator of the deformation is
bounded.
\end{abstract}

\maketitle

\section{Introduction}

In \cite{Ne} Nelson proved the following famous hypercontractivity result for the classical Ornstein-Uhlenbeck semigroup $P^1_t$
acting on $L^p(\Real^n, d\gamma)$, where $d\gamma$ is the $n$-dimensional gaussian measure on $\Real^n$.
	
	\begin{thm} {\bf (Nelson, 1973)}\\
		For $1< p < r <\infty$ we have
			$$\norm{P^1_t}_{L^p \rightarrow L^r} \le 1\;\; \text{if and only if}\;\; e^{-2t} \le \frac{p-1}{r-1}.$$
	\end{thm}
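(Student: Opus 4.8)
The plan is to prove the two implications separately; the ``only if'' part is elementary and the ``if'' part carries all the weight.

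\emph{Necessity.} I would test the contraction on the exponentials $f_a(x)=e^{a\cdot x}$, $a\in\Real^n$. The Mehler formula $P^1_tf_a(x)=e^{e^{-t}a\cdot x}\,e^{|a|^2(1-e^{-2t})/2}$ together with $\int e^{b\cdot x}\,d\gamma=e^{|b|^2/2}$ gives
$$\norm{P^1_tf_a}_{L^r}=e^{\frac{|a|^2}{2}\bigl(1+(r-1)e^{-2t}\bigr)},\qquad \norm{f_a}_{L^p}=e^{\frac{p}{2}|a|^2},$$
so $\norm{P^1_tf_a}_{L^r}/\norm{f_a}_{L^p}=\exp\bigl(\tfrac{|a|^2}{2}(1+(r-1)e^{-2t}-p)\bigr)$. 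Letting $|a|\to\infty$, boundedness of this ratio forces $1+(r-1)e^{-2t}-p\le 0$, i.e. $e^{-2t}\le(p-1)/(r-1)$.

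\emph{Sufficiency.} First I would reduce to $n=1$: the $n$-dimensional Ornstein--Uhlenbeck semigroup is the $n$-fold tensor power of the one-dimensional one, and for $1\le p\le r$ the $L^p\to L^r$ norm of a tensor product of operators is at most the product of the norms, a consequence of Minkowski's integral inequality applied with the exponent $r/p\ge1$. So it suffices to treat the scalar case. Fix $p>1$ and $t\ge 0$, and set $r(s):=1+e^{2s}(p-1)$, an increasing function with $r(0)=p$ and the key relation $r'(s)=2\bigl(r(s)-1\bigr)$. Since $L^r(d\gamma)$-norms are increasing in $r$ on a probability space, it is enough to show that $\Phi(s):=\norm{P^1_sf}_{L^{r(s)}}$ is nonincreasing for $f$ in a dense class of bounded functions bounded away from $0$. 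Putting $g=g(s)=P^1_sf$, differentiating $\log\Phi$, using $\partial_s g=-Lg$ (with $L$ the OU generator and the integration by parts $\int(Lg)h\,d\gamma=\int g'h'\,d\gamma$, derivatives in the space variable), and substituting $h=g^{r(s)/2}$, a routine computation turns $\Phi'(s)\le0$ into Gross's logarithmic Sobolev inequality with the sharp constant,
$$\int h^2\log h^2\,d\gamma\;\le\;2\int (h')^2\,d\gamma+\norm{h}_{L^2}^2\log\norm{h}_{L^2}^2;$$
the choice $r'=2(r-1)$ is precisely what makes this constant equal to $2$, which is what produces the optimal threshold $(p-1)/(r-1)$.

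What remains — and this is the real content, everything above being bookkeeping — is to establish this one-dimensional logarithmic Sobolev inequality. I would prove it either along Gross's original lines (verify the two-point analogue $\mathrm{Ent}(h^2)\le c\,\mathcal E(h,h)$ on $\{-1,1\}$ by an elementary one-variable estimate, tensorize to $\{-1,1\}^N$ with a dimension-free constant, and pass to the Gaussian through the central limit theorem), or directly via the semigroup: writing $u(s)=\mathrm{Ent}_\gamma(P^1_s h^2)$ one has $u(\infty)=0$ and $u'(s)=-I(P^1_s h^2)$, where $I$ is the Fisher information, so $\mathrm{Ent}_\gamma(h^2)=\int_0^\infty I(P^1_s h^2)\,ds$; the commutation identity $(P^1_s f)'=e^{-s}P^1_s(f')$ and the Cauchy--Schwarz inequality for the Markov kernel give $I(P^1_s h^2)\le e^{-2s}I(h^2)$, whence $\mathrm{Ent}_\gamma(h^2)\le\tfrac12 I(h^2)=2\int (h')^2\,d\gamma$. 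Either way, the main obstacle, and the only genuinely nontrivial step, is obtaining the constant in the logarithmic Sobolev inequality exactly: any loss there degrades the threshold from $(p-1)/(r-1)$ to something strictly worse.
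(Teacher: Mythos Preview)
Your proposal is a correct and standard proof of Nelson's theorem, via Gross's equivalence with the logarithmic Sobolev inequality. However, there is nothing to compare it to: the paper does not prove this statement. Theorem~1.1 is quoted in the introduction purely as historical background, attributed to Nelson \cite{Ne}, and no proof is given or attempted; the paper's own work begins with the $q$-deformed and non-tracial analogues (Theorems~1.3, 2.3, 3.2). So your write-up is fine as an account of the classical result, but it is not a reconstruction of anything in the paper --- the authors simply cite it.

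One minor remark on presentation: in the necessity argument you say ``boundedness of this ratio'' forces the exponent to be nonpositive, but what you are actually using is that the ratio is at most $1$ for every $a$ (contractivity, not mere boundedness). The conclusion is the same, since either hypothesis on a family $\exp(c|a|^2)$ forces $c\le 0$, but the phrasing could be tightened.
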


	Since then, there have been several analogous results in the context
of non-commutative probability.  A fermionic counterpart of the
Nelson's result has been clarified by Carlen/Lieb in \cite{CL93}, and
including the fermionic case Biane proved in \cite{Bi97} the following
hypercontractivity result for the $q$-Ornstein-Uhlenbeck semigroup
$P^q_t$ acting on $L^p(\Gamma_q, \tau_q)$, where $\Gamma_q$ is the von
Neumann algebra generated by $q$-gaussians by Bo\.zejko and Speicher
(\cite{BS91, BS94}) and $\tau_q$ is the vacuum state.
	
	\begin{thm} {\bf (Biane, 1997)}\\
		Let $-1\le q < 1$. For $1< p < r <\infty$ we have
			$$\norm{P^q_t}_{L^p \rightarrow L^r} \le 1\;\; \text{if and only if}\;\; e^{-2t} \le \frac{p-1}{r-1}.$$
	\end{thm}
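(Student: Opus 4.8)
The plan follows the Carlen--Lieb/Biane strategy: an elementary perturbative computation for necessity, a reduction of the ``if'' part to a single $L^p\to L^2$ estimate, and a combinatorial analysis on the $q$-Fock space for the latter, the Fermi endpoint $q=-1$ being the earlier theorem of Carlen--Lieb. For necessity, choose a unit vector $h$ in the underlying real Hilbert space and let $\omega_q(h)=a_q(h)+a_q(h)^{*}$ be the associated self-adjoint $q$-Gaussian, so that $\tau_q(\omega_q(h))=0$, $\tau_q(\omega_q(h)^{2})=1$ and $P^q_t\omega_q(h)=e^{-t}\omega_q(h)$. Testing the inequality on $f=1+\eps\,\omega_q(h)$, which is positive for small $\eps$, a second-order expansion gives $\norm{f}_p^{2}=1+(p-1)\eps^{2}+o(\eps^{2})$ and $\norm{P^q_tf}_r^{2}=1+(r-1)e^{-2t}\eps^{2}+o(\eps^{2})$, so that $\norm{P^q_tf}_r\le\norm{f}_p$ forces $e^{-2t}\le\frac{p-1}{r-1}$.

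For the sufficiency, each $P^q_s$ is unital, $\tau_q$-preserving and completely positive, hence an $L^p$-contraction for every $p$, so one may assume $e^{-2t}=\frac{p-1}{r-1}$. Using the semigroup law $P^q_{s+u}=P^q_sP^q_u$, the self-adjointness of $P^q_s$ on $L^{2}(\tau_q)$ — which gives $\norm{P^q_s}_{L^p\to L^2}=\norm{P^q_s}_{L^2\to L^{p'}}$ — and a standard further argument for the ranges of $p,r$ not straddling $2$, everything reduces to
\[
\norm{P^q_t}_{L^p(\Gamma_q,\tau_q)\to L^2(\Gamma_q,\tau_q)}\le 1\qquad\text{for }1<p\le 2,\ e^{-2t}=p-1 .
\]
By weak-$*$ density of $\bigcup_{\dim K<\infty}\Gamma_q(K)$ and the compatibility of $P^q_t$ and of the conditional expectations with this net, it suffices to work in $\Gamma_q(K)$ with $\dim K<\infty$ and, by a further density step, to bound $\norm{P^q_t\xi}_2$ for $\xi$ a Wick polynomial in the generators.

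The key device is the $q$-Mehler formula. With $\cos\theta=e^{-t}$ and the real isometry $j_\theta\colon K\to K\oplus K$, $j_\theta h=(\cos\theta\,h,\sin\theta\,h)$, the second quantization $\Gamma_q(j_\theta)\colon\Gamma_q(K)\to\Gamma_q(K\oplus K)$ is a $\tau_q$-preserving $*$-embedding and $P^q_t=E\circ\Gamma_q(j_\theta)$, where $E$ denotes the trace-preserving conditional expectation onto $\Gamma_q(K\oplus 0)$; in probabilistic terms $P^q_t$ performs the rotation $x_i\mapsto\cos\theta\,x_i+\sin\theta\,x_i'$ followed by integration against a $q$-independent copy of the generators. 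Since $\Gamma_q(j_\theta)$ is $L^p$-isometric, the estimate is equivalent to $\norm{E(\widetilde\xi)}_2\le\norm{\widetilde\xi}_p$ with $\widetilde\xi=\Gamma_q(j_\theta)\xi$, which is strictly stronger than $L^p$-contractivity of $E$ — the surplus must be extracted from the rotation. Expanding both sides through the $q$-Wick formula (pair partitions weighted by $q^{\#\mathrm{crossings}}$), one is left to show that the quadratic form $\langle\Gamma_q(e^{-2t})\xi,\xi\rangle-\norm{\xi}_p^{2}$ stays nonpositive on Wick polynomials. This is the genuine obstacle: for $q=-1$ Carlen--Lieb could use $\Gamma_{-1}(\Real^{2n})\cong M_{2^{n}}$ and the factorization of $P^{-1}_t$ into a graded tensor power of a two-mode semigroup, reducing matters to an explicit matrix inequality that tensorizes, whereas for $-1<q<1$ neither the matrix structure nor the tensor factorization survives, all chaos levels are coupled, and one needs a genuinely infinite-dimensional argument — in practice a direct manipulation of the $q$-creation operators combined with Haagerup-type norm estimates on the $q$-Fock space, eventually reducing the $q$-deformed inequality to the free case $q=0$.
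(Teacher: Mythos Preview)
The necessity computation and the preliminary reductions (semigroup law, self-adjointness on $L^2$ giving $\norm{P^q_s}_{L^p\to L^2}=\norm{P^q_s}_{L^2\to L^{p'}}$, passage to finite-dimensional $K$) are correct and standard. The gap is the entire last paragraph. After writing $P^q_t=E\circ\Gamma_q(j_\theta)$ you explicitly concede that ``this is the genuine obstacle'', and what follows is not an argument but a wish list---``direct manipulation of the $q$-creation operators'', ``Haagerup-type norm estimates'', ``eventually reducing the $q$-deformed inequality to the free case $q=0$''. None of this is carried out, and the Mehler representation by itself gives you nothing beyond $L^p$-contractivity of $E$, which is strictly weaker than what you need. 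As it stands, the sufficiency direction is unproved.

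The sketched endgame is also not what Biane actually does, nor what the present paper does in the non-tracial extension (Sections~\ref{sec-babyFock}--\ref{sec-CentralLimit}). The argument runs through the \emph{baby Fock} model, not through any reduction to $q=0$. One fixes a sign function $\eps(i,j)\in\{\pm1\}$ and considers generators $x_i$ with mixed relations $x_ix_j-\eps(i,j)x_jx_i=2\delta_{ij}$; the resulting algebra is a matrix algebra, and the associated $\eps$-Ornstein--Uhlenbeck semigroup is shown to satisfy the optimal $L^p\to L^2$ bound by induction on the number of generators, each step using the Ball--Carlen--Lieb two-point uniform-convexity inequality exactly as in the Carlen--Lieb fermionic proof, but now for an arbitrary sign pattern. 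One then makes the signs random, i.i.d.\ with mean $q$, and invokes Speicher's central limit theorem: normalized sums of the baby variables converge in $*$-distribution to $q$-Gaussians, and the $L^p$-norms pass to the limit. The free case $q=0$ is merely one value of the expectation, not a target to which the general case is reduced.
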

The above result has been further extended to the case of gaussians satisfying more general commutation relations by Krolak (\cite{Kr05}),
and the holomorphic version has been proved by Kemp (\cite{Kem05}).
	
Biane's generalized result concerns only about von Neumann algebras
with a normal tracial state.  Thus, it is natural to be interested in
their non-tracial relatives, namely, $q$-Araki-Woods algebras
$\Gamma_q(H_\Real,(U_t))$ (\cite{Hi} or see section
\ref{sec-q-AW-alg}), which is a generalization of Araki-Woods factors
(\cite{AW69}) depending on the deformation group of orthogonal transformations $(U_t)$ on
some real Hilbert space $H_\Real$. As usual, we denote by $A$ the
(unbounded) generator of $(U_t)$ on the complexification of $H_\Real$.
Fortunately, we were able to prove the following hypercontractivity
result for the $q$-Ornstein-Uhlenbeck semigroup $P^{q,(U_t)}_t$
(simply $P^q_t$ again) acting on $L^p(\Gamma_q(H_\Real,(U_t)),
\tau_q)$, where $\tau_q$ is the vacuum state.

	\begin{thm}\label{thm-general}
		Let $-1\le q < 1$. For $1< p <2$ we have
			$$\norm{P^q_t}_{L^p \rightarrow L^2} \le 1\;\; \text{if}\;\; e^{-2t} \le C\norm{A}^{1-\frac{2}{p}}(p-1),$$
		where $C$ is a universal constant.
		
		If $A$ is unbounded, then ${P^q_t}$ is not bounded from $L^p$ to $L^2$.
	\end{thm}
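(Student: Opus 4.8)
The claim has two halves: the hypercontractive estimate, which presupposes $\norm A<\infty$, and a negative assertion. I would prove the first by transporting Biane's tracial theorem to the non‑tracial setting at a cost measured by powers of $\norm A$, and the second by testing $P^q_t$ on the first Wick chaos, where it acts as the scalar $e^{-t}$.

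\emph{The hypercontractive bound.} A first reduction brings us to $\dim H_\Real<\infty$: for a finite‑dimensional $(U_t)$‑invariant $K_\Real\subseteq H_\Real$ the subalgebra $\Gamma_q(K_\Real,(U_t|_{K_\Real}))$ is invariant under the modular group of $\tau_q$, hence is the range of a $\tau_q$‑preserving conditional expectation $E_K$ which is a contraction on every $L^p(\tau_q)$ and commutes with $P^q_t$ (both preserve the Wick grading); since such $K_\Real$ are directed with dense union, it suffices to bound $\norm{P^q_t x}_{L^2}$ there, with $\norm A$ unchanged. Next decompose $x=\sum_{n\ge 0}x_n$ into Wick chaos, so $\norm{P^q_t x}_{L^2(\tau_q)}^2=\sum_n e^{-2nt}\norm{x_n}_{L^2(\tau_q)}^2$, and compare with a tracial $q$‑Gaussian algebra $\Gamma_q(\widetilde H_\Real)$ (for $\widetilde H_\Real$ large), in the spirit of Shlyakhtenko's description of free Araki--Woods factors: Biane's theorem there, with $r=2$ and $e^{-2t_0}=p-1$, yields $\sum_n (p-1)^n\norm{y_n}_{L^2(\mathrm{tr})}^2\le \norm y_{L^p(\mathrm{tr})}^2$. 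The dictionary between the two pictures is furnished by the modular operator of $\tau_q$, which on the $n$‑th chaos is the second quantisation $\Gamma_q(A)$, of norm at most $\norm A^{n}$ together with its inverse because $A$ is bounded --- this is exactly where boundedness of $A$ is indispensable --- so that the $\tau_q$‑$L^p$ and $\tau_q$‑$L^2$ norms of a degree‑$n$ element are comparable to their tracial counterparts with distortion a power of $\norm A^{n}$. Inserting the interpolation weights (of exponent $1/p-1/2$, contributing $\norm A^{\,n(1/p-1/2)}$ on the $n$‑th chaos since $1<p<2$) on the $L^2$ and the $L^p$ sides should convert Biane's inequality into $\sum_n \bigl(C\norm A^{1-2/p}(p-1)\bigr)^{n}\norm{x_n}_{L^2(\tau_q)}^2\le \norm x_{L^p(\tau_q)}^2$ for a universal $C$, which is the assertion whenever $e^{-2t}\le C\norm A^{1-2/p}(p-1)$, the exponent $1-2/p=-2(1/p-1/2)$ reflecting the two weight insertions. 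The main obstacle is precisely this change‑of‑reference‑state comparison: when $p\neq 2$ the $L^p$‑norm does not split over the chaos, so it cannot be carried out degree by degree, and one is forced to establish a bounded "weighted chaos multiplier" from $L^p(\tau_q)$ to the tracial $L^p(\Gamma_q(\widetilde H_\Real))$ --- a statement of the same flavour as hypercontractivity itself --- by complex interpolation, exploiting that the modular operator, though unbounded overall, is bounded by $\norm A^{n}$ on each chaos.

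\emph{Unboundedness of $A$.} The first Wick chaos --- the span of the generalised Gaussians $s(h)$, $h\in H_\Real$ --- is $P^q_t$‑invariant with $P^q_t s(h)=e^{-t}s(h)$, so for every $t$
$$\norm{P^q_t}_{L^p\to L^2}\ \ge\ e^{-t}\sup_{h\in H_\Real\setminus\set 0}\frac{\norm{s(h)}_{L^2(\tau_q)}}{\norm{s(h)}_{L^p(\tau_q)}},$$
and it remains to see that this supremum is infinite once $A$ is unbounded. Since $(U_t)$ is orthogonal, an eigenvalue $\lambda$ of $A$ is paired with $\lambda^{-1}$, and the corresponding eigenvectors span a two‑real‑dimensional $(U_t)$‑invariant subspace $K_\Real\subseteq H_\Real$; on the two‑dimensional space $\set{s(h):h\in K_\Real}$ the modular operator of $\tau_q$ is $\mathrm{diag}(\lambda,\lambda^{-1})$, so the GNS density of $\tau_q$ there is, after normalisation, that $2\times 2$ matrix, and a short computation with it exhibits a unit vector $h$ for which $\norm{s(h)}_{L^2(\tau_q)}/\norm{s(h)}_{L^p(\tau_q)}$ is of order $\bigl((\lambda+\lambda^{-1})/2\bigr)^{1/p}$. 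Letting $\lambda\to\infty$ along the spectrum of the unbounded $A$, this ratio diverges, so $P^q_t$ is unbounded from $L^p$ to $L^2$ for every $t\ge 0$, which completes the argument.
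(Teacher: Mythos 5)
Your treatment of the second assertion is in the right spirit --- the paper also tests $P^q_t$ on the first chaos and derives the obstruction from a one-dimensional $L^p$-norm estimate --- but your ``short computation'' is only asserted, and the exponent you announce is not the correct one: for the generalized circular variable attached to the eigenvalue $\lambda=\mu^4$ one has $\norm{g\,D^{1/2}}_2\sim\mu^{-1}$ while $\norm{g\,D^{1/p'}}_{p'}\sim\mu^{1-4/p'}$, so the relevant ratio is of order $\lambda^{1/p-1/2}$, not $\lambda^{1/p}$. Moreover, working directly with $p<2$ as you propose requires an \emph{upper} bound on an $L^p$-norm below $2$, which is the delicate direction; the paper sidesteps this by dualizing to $L^2\to L^{p'}$ with $p'>2$, where $g^*g$ lies in the centralizer and H\"older between $\norm{g^*g}_1$, $\norm{g^*g}_2$ and $\norm{g^*g}_\infty$ gives the estimate. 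Since the ratio still diverges with either exponent, this half is repairable.

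The first half, however, has a genuine gap, and you have in effect named it yourself: the comparison between $L^p(\tau_q)$ and a tracial $L^p$-space via chaos-wise bounds on the modular operator cannot be carried out for $p\neq 2$, because the $L^p$-norm does not decompose over the Wick chaoses, and the boundedness of the ``weighted chaos multiplier'' you would need is, as you concede, a statement of the same strength as the theorem itself; invoking complex interpolation at that point is not an argument, since the endpoint bounds you would interpolate between are exactly what is missing. It is also unclear what tracial algebra $\Gamma_q(\widetilde H_\Real)$ you intend to compare with: when $(U_t)$ is nontrivial, $\Gamma_q(H_\Real,(U_t))$ is type III, so there is no state-preserving embedding into a tracial $q$-Gaussian algebra and hence no canonical isometric relation between the $L^p$-spaces; already on the first chaos the two norms differ by the factor $\mu^{2-4/p}$ computed above, and these chaos-wise distortions do not assemble into a bounded map. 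The paper's proof takes an entirely different route: it establishes a new \emph{asymmetric} version of the Ball--Carlen--Lieb optimal convexity inequality (Lemma \ref{lem-modified-CL}), with weights $\lambda=\frac{1}{1+\mu^4}$ and $1-\lambda$ dictated by the vacuum state, runs the Carlen--Lieb/Biane induction inside Nou's twisted baby Fock model, where Proposition \ref{form} exhibits the state as a weighted trace on $\mathbb M_2\otimes\Gamma_{\la 1,\cdots,n-1\ra}$, and only then passes to $\Gamma_q(H_\Real,(U_t))$ by Speicher's central limit theorem and ultraproducts of $L^p$-spaces. None of these ingredients appears in your proposal, and without a substitute for the weighted convexity inequality the hypercontractive bound is not proved.
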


Following the idea of Carlen/Lieb and Biane we will use the baby Fock
model and Speicher's central limit procedure as key ingredients for
the proof.  Although we were unable to determine ``the optimal time"
for the contractivity, the constant in the above shows the
same optimal order $p-1$ as in the tracial case.

This paper is organized as follows.  In section \ref{sec-q-AW-alg} we
will review about $q$-generalized gaussians, $q$-Araki-Woods algebras,
and $q$-Ornstein-Uhlenbeck semigroup.  We will also briefly review of
an extension procedure of a map on non-tracial von Neumann algebras
to the associated non-commutative $L^p$ spaces.
In section \ref{sec-babyFock} we will introduce the {\it
twisted baby Fock} model by Nou (\cite{N06}), and we will prove the
first main result, hypercontractivity of $\eps$-Ornstein-Uhlenbeck
semigroup, which is an analog of $q$-Ornstein-Uhlenbeck semigroup in
the baby Fock model.  In section \ref{sec-CentralLimit} we will use
Speicher's central limit procedure and hypercontractivity of
$\eps$-Ornstein-Uhlenbeck semigroup to prove hypercontractivity of the
$q$-Ornstein-Uhlenbeck semigroup.  Finally in section \ref{sec-1dim}
we will use 1-dimensional estimate to get the converse direction of
the main result from the previous section.
	
\section{$q$-Araki-Woods algebras and $q$-Ornstein-Uhlenbeck semigroup}\label{sec-q-AW-alg}

We begin with the most general definition of $q$-Araki-Woods algebra
(see \cite{AW69, Hi, Sh97}).  The construction starts with a (separable) real Hilbert space
$H_\Real$ and $(U_t)_{t\in \Real}$ a group of orthogonal transformations
on $H_\Real$. Let $H_\Com=H_\Real+i H_\Real$ be the complexification of
$H_\Real$. The group $(U_t)$ extends to a group of unitaries on
$H_\Com$. By Banach-Stone's theorem, there is a self-adjoint
(unbounded) operator $A$ so that $U_t=A^{it}$. One then defines a new
scalar product on $H_\Com$ by
$$\la x,y\ra_U=\la \frac {2A}{1+A} x,y\ra_{H_\Com}.$$ Its completion
is then denoted by $\Hi$.

We consider the
operator of symmetrization $P_n$ on $\Hi^{\otimes n}$ defined by $$P_0
\Om = \Om,$$
    $$P_n (f_1\otimes \cdots \otimes f_n) = \sum_{\pi \in S_n}q^{i(\pi)}f_{\pi(1)}\otimes \cdots \otimes f_{\pi(n)},$$
where $S_n$ denotes the symmetric group of permutations of $n$ elements and
    $$i(\pi) = \# \{(i,j) | 1\leq i,j \leq n, \pi(i) > \pi(j)\}$$ is
    the number of inversions of $\pi \in S_n$.

Now we define the {\it $q$-inner product} $\left\langle \cdot, \cdot
\right\rangle_q$ on $\Hi^{\otimes n}$ by
    $$\left\langle \xi, \eta \right\rangle_q =\delta_{n,m}\left\langle
    \xi, P_n \eta \right\rangle \;\, \text{for}\;\, \xi\in
    \Hi^{\otimes n}, \eta\in\Hi^{\otimes m},$$ where $\left\langle
    \cdot, \cdot \right\rangle$ is the inner product in $\Hi^{\otimes n}$.
We denote by $\Hi^{\otimes_q n}$ the resulting Hilbert space.  Since
    $P_n$'s are strictly positive for $-1<q<1$ (\cite{BS91}),
    $\left\langle \cdot, \cdot \right\rangle_q$ is actually an inner
    product.  
Then one can associate a $q$-Fock space
$\F_q(\Hi)$.
    $$\F_q(\Hi) = \mathbb{C}\Om \oplus \bigoplus_{n\geq 1}\Hi^{\otimes_q n},$$ where $\Om$ is a unit vector called vacuum.

When $q=0$ we recover the classical  full Fock space over $\Hi$.
In the extreme cases $q = \pm 1$, $\F_1(\Hi)$ and $\F_{-1}(\Hi)$ refer to Bosonic and Fermionic Fock spaces, respectively.
In the whole paper we are interested in $-1\le q < 1$.

For $h\in H_\Real$, we can define a generalized $q$-semi-circular random
variable on $\F_q(\Hi)$ by
$$ s(h)=\ell(h)+\ell^*(h),$$ where $\ell_q(h)$ is the left creation
operator by $h\in \Hi$ and $\ell^*_q(h)$ is the adjoint of
$\ell_q(h)$.

By definition $\Gamma_q(H_\Real,(U_t)) = \{s(h) : h\in H_\Real\}''$.
The vacuum state $\tau_q$ defined by $\tau_q(\cdot) =
\left\langle \,\cdot\, \Om, \Om \right\rangle_q$ is a normal faithful
state on $\Gamma_q(H_\Real,(U_t))$, and $(\Gamma_q(H_\Real,(U_t)),\tau_q,\Om)$ is in GNS position.
$\Gamma_q(H_\Real,(U_t))$ is known to be a type $II_1$ algebra if and
only if $(U_t)$ is trivial, in general, it is a type III von Neumann algebra,
whose modular theory relative to $\tau_q$ is well understood (\cite{Sh97, Hi}). 

Now we consider the $q$-Ornstein-Uhlenbeck semigroup $P^{q,(U_t)}_t$ (simply $P^q_t$).
Since $\Om$ is a separating vector for $\Gamma_q(H_\Real,(U_t))$ we can use the second quantization (\cite[Proposition 1.1]{Hi}, \cite[Theorem 2.11]{BKS})
to get the semigroup $P^q_t : \Gamma_q \rightarrow \Gamma_q$ given by
	$$P^q_t(X)\Om = \F(e^{-t}id_\Hi)X\Om,$$
where $\F(e^{-t}id_\Hi) : \F_q(\Hi) \rightarrow \F_q(\Hi)$ defined by
	$$\F(e^{-t}id_\Hi)|_{\Hi^{\otimes n}} = e^{-nt}id_{\Hi^{\otimes n}},\;\; n\ge 0.$$
Note that second quantizations commute with the modular group of $\tau_q$.
For the later use we record the properties of the semigroup as follows.
	\begin{prop}
	$P^q_t$, $t\ge 0$ is a completely positive, normal, $\tau_q$-preserving contraction that commutes with the modular group of $\tau_q$.
	\end{prop}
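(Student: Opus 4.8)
The plan is to obtain all four properties from the standard functorial behaviour of the second quantization map $\F$ on the $q$-Fock space, as developed in \cite{BKS, Hi}, supplemented by a couple of short verifications. First I would recall the relevant facts about $\F$: for a contraction $T$ on $\Hi$, the operator $\F(T)$ (given by $\F(T)\Om=\Om$ and $\F(T)|_{\Hi^{\otimes_q n}}=T^{\otimes n}$) is a contraction on $\F_q(\Hi)$; the assignment $T\mapsto\F(T)$ is multiplicative, $\F(ST)=\F(S)\F(T)$, and $*$-preserving, $\F(T)^*=\F(T^*)$; and when $T=T^*\ge 0$ is a contraction, the prescription $X\Om\mapsto\F(T)X\Om$ extends from the dense subspace $\Gamma_q\Om$ to a normal, unital, completely positive map on $\Gamma_q$ — this last point being exactly \cite[Proposition 1.1]{Hi} (see also \cite[Theorem 2.11]{BKS}). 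Applying this with $T=e^{-t}id_\Hi$, which is a positive self-adjoint contraction, immediately yields that $P^q_t$ is well defined, normal and completely positive, and unital since $\F(e^{-t}id_\Hi)\Om=\Om$, i.e.\ $P^q_t(1)=1$; a unital completely positive map between $C^*$-algebras is a complete contraction, which settles the contractivity assertion. The semigroup law, although not part of the statement, also drops out of $\F(e^{-t}id_\Hi)\F(e^{-s}id_\Hi)=\F(e^{-(t+s)}id_\Hi)$.

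Next I would check $\tau_q$-invariance by a one-line computation: since $\F(e^{-t}id_\Hi)$ is self-adjoint on $\F_q(\Hi)$ and fixes $\Om$,
$$\tau_q(P^q_t(X))=\la\F(e^{-t}id_\Hi)X\Om,\Om\ra_q=\la X\Om,\F(e^{-t}id_\Hi)\Om\ra_q=\la X\Om,\Om\ra_q=\tau_q(X)$$
for all $X\in\Gamma_q$.

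For the commutation with the modular group I would invoke the description of the modular theory of $\tau_q$ from \cite{Sh97, Hi}: the modular group $\sigma^{\tau_q}_s$ is implemented on $\F_q(\Hi)$ by a second quantization, $\Delta^{is}=\F(T_s)$ for a one-parameter unitary group $(T_s)$ on $\Hi$ coming from $A$; in particular second quantizations commute with it. Since $e^{-t}id_\Hi$ is a scalar operator it commutes with every $T_s$, so multiplicativity of $\F$ gives $\F(e^{-t}id_\Hi)\Delta^{\pm is}=\Delta^{\pm is}\F(e^{-t}id_\Hi)$; then, for $X\in\Gamma_q$ and using $\Delta^{-is}\Om=\Om$,
$$P^q_t(\sigma^{\tau_q}_s(X))\Om=\F(e^{-t}id_\Hi)\Delta^{is}X\Om=\Delta^{is}\F(e^{-t}id_\Hi)X\Om=\sigma^{\tau_q}_s(P^q_t(X))\Om,$$
and since $\Om$ is separating for $\Gamma_q$ this forces $P^q_t\circ\sigma^{\tau_q}_s=\sigma^{\tau_q}_s\circ P^q_t$.

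The only non-routine ingredient is the passage from the Fock-space contraction $\F(e^{-t}id_\Hi)$ to a \emph{normal completely positive} map on the von Neumann algebra $\Gamma_q$, together with the parallel fact that the modular group is itself implemented by a second quantization; both are imported from the structure results of \cite{Hi, BKS, Sh97}, and the remaining work — verifying that $e^{-t}id_\Hi$ meets their hypotheses (self-adjoint, positive, norm $\le 1$) and carrying out the two displayed computations — is elementary. So I do not anticipate a genuine obstacle here beyond quoting the second-quantization machinery correctly.
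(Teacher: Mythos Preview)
Your proposal is correct and matches the paper's approach: the paper does not give a proof of this proposition but simply records it as a consequence of the second-quantization results \cite[Proposition~1.1]{Hi} and \cite[Theorem~2.11]{BKS}, together with the remark that second quantizations commute with the modular group of $\tau_q$. You have accurately unpacked exactly these ingredients.
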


Recall that  $A$ is said to be almost periodic when it has an orthonormal basis of
eigenvectors. In this situation, there is a more tractable model for
$\Gamma_q(H_\Real,(U_t))$ that we will use (see Section 2.2 in \cite{N06}). 

Assume that we are given a sequence $\mu = (\mu_i)_{i\ge 1} \subseteq [1,\infty)$.
The construction starts with a separable complex Hilbert space $\Hi$
equipped with an orthonormal basis $(e_{\pm k})_{k\geq 1}$. We denote
as above by $\F_q(\Hi)$ the associated $q$-Fock space.
We define {\it $q$-(generalized) gaussian variables} (or {\it $q$-generalized
circular variables)} by
    $$g_{q,i} = \mu^{-1}_i\ell_q(e_i) + \mu_i \ell^*_q(e_{-i}).$$
Let $\Gamma_q^{\mu}$ ($-1\leq q < 1$), the von Neumann algebra generated by $\{g_{q,k}\}_{k \geq 1}$
and $\tau_q$ still denotes the vacuum state.

If $A$ is almost periodic and $(\mu_i^4)_{i\ge 1} \subseteq [1,\infty)$
is the sequence of eigenvalues of $A$ that are bigger than 1, then there is a
spatial isomorphism
	$$(\Gamma_q(H_\Real,(U_t)),\tau_q,\Om) \cong (\Gamma_q^\mu,\tau_q,\Om).$$
Of course, one has $\norm{A}=\sup \mu_i^4$.

In this model, the $q$-Ornstein-Uhlenbeck semigroup $P^{q,\mu}_t$ (simply $P^q_t$) is also obtained by the second quantization procedure with the same formula.

All the properties we will be looking at are stable under taking ultraproducts,
so that the discretization procedure in Section 6.1 of \cite{N06} allows
us to work only in the almost periodic situation.

\medskip

In this paper we will often need an extension of a map defined on the algebra level to the $L^p$ setting.
Let $\M$, $\N$ be von Neumann algebras with distinguished normal faithful states $\varphi$ and $\psi$,
respectively. Then, one can define the Haagerup's $L^p$ space $L^p(\M, \varphi)$, $1\le p < \infty$.
For the details about $L^p(\M, \varphi)$ (simply $L^p(\M)$) we refer \cite{JX03, PX03}.
Let $T: \M \rightarrow \N$ be a completely positive contraction.
We say $T$ is {\it state-preserving} if $\psi\circ T=\varphi$ and
it intertwines the modular groups: $\sigma^\psi_t\circ T=T\circ \sigma^\varphi_t$ for all $t\in \Real$.
Let tr$_\M$ be the trace on $L^1(\M)$ and $D_\varphi$ be the density operator associated to
$\varphi$. Similarly, we consider tr$_\N$ and $D_\psi$.  Then $\M
D^{\frac{1}{p}}_\varphi$ is norm-dense in $L^p(\M)$, and for $x\in \M$
the elements $xD^{\frac{1}{p}}_\varphi \in L^p(\M)\; (1\le p <
\infty)$ are identified in the sense of complex interpolation.  Now we
consider an extension of $T$ to $L^p$ setting given by $T^p : \M
D^{\frac{1}{p}}_\varphi \rightarrow \N D^{\frac{1}{p}}_\psi, \;\;
xD^{\frac{1}{p}}_\varphi \mapsto (Tx)D^{\frac{1}{p}}_\psi.$ It is
well-known that $T^p$ can be extended to a contraction on $L^p(\M)$ (\cite[Lemma 2.2]{JX03}).
For a later use we record the extension procedure as follows.
	\begin{prop}\label{prop-Lp-ext}
	Let $\M$, $\N$ be von Neumann algebras with distinguished
	normal faithful states $\varphi$ and $\psi$, respectively, and
	$T: \M \rightarrow \N$ be a state-preserving completely positive
	contraction. Then
		\begin{equation}\label{eq-Lp-ext}
		T^p : \M D^{\frac{1}{p}}_\varphi \rightarrow \N D^{\frac{1}{p}}_\psi,
		\;\; xD^{\frac{1}{p}}_\varphi \mapsto (Tx)D^{\frac{1}{p}}_\psi.
		\end{equation}
	extends to a contraction on $L^p(\M)$.
	
	In particular, if $T$ is an onto isometry, then so is $T^p$ between $L^p(\M)$ and $L^p(\N)$.
	\end{prop}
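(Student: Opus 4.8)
The plan is to deduce the whole statement from its first assertion, which is precisely \cite[Lemma 2.2]{JX03} (quoted just above), so rather than reprove it I would only recall the mechanism: one has the contraction $T^\infty := T\colon\M\to\N$ by hypothesis, and the state-preservation together with the intertwining of the modular groups produces a companion contraction $T^1\colon L^1(\M)\to L^1(\N)$ which acts by $xD_\varphi\mapsto (Tx)D_\psi$ on the dense subspace $\M D_\varphi$ --- this $L^1$-bound is where the Kadison--Schwarz inequality and the compatibility of $T$ with the modular structure enter (in the spirit of Accardi--Cecchini and Haagerup). Since the embeddings $\M\hookrightarrow L^1(\M)$, $x\mapsto xD_\varphi$, and $\N\hookrightarrow L^1(\N)$ realize $L^p(\M,\varphi)$, $L^p(\N,\psi)$ as the complex interpolation spaces $[\M,L^1(\M)]_{1/p}$, $[\N,L^1(\N)]_{1/p}$ (Kosaki), interpolating $T^\infty$ against $T^1$ yields the contraction $T^p$ on $L^p(\M)$ implementing \eqref{eq-Lp-ext}.

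For the ``in particular'' clause I would argue by playing two such extensions against each other. Suppose $T\colon\M\to\N$ is a state-preserving completely positive onto isometry. First note $T$ is unital: $0\le T(1)\le 1$ and $\psi\bigl(1-T(1)\bigr)=\varphi(1)-\varphi(1)=0$ with $\psi$ faithful force $T(1)=1$; hence $T$ is a unital completely positive surjective isometry, so a $*$-isomorphism, and its inverse $T^{-1}\colon\N\to\M$ is again a normal completely positive contraction. Moreover $T^{-1}$ is state-preserving, since $\varphi\bigl(T^{-1}(y)\bigr)=\psi\bigl(T(T^{-1}y)\bigr)=\psi(y)$, and it intertwines the modular groups because $\sigma^\psi_t=T\sigma^\varphi_tT^{-1}$ holds automatically for a state-preserving $*$-isomorphism. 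Thus the first assertion applies to both $T$ and $T^{-1}$, giving contractions $T^p\colon L^p(\M)\to L^p(\N)$ and $(T^{-1})^p\colon L^p(\N)\to L^p(\M)$ implementing $xD_\varphi^{1/p}\mapsto(Tx)D_\psi^{1/p}$ and $yD_\psi^{1/p}\mapsto(T^{-1}y)D_\varphi^{1/p}$ respectively.

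It remains to see these are mutually inverse. On the dense subspace $\M D_\varphi^{1/p}$ one computes $(T^{-1})^p\bigl(T^p(xD_\varphi^{1/p})\bigr)=(T^{-1})^p\bigl((Tx)D_\psi^{1/p}\bigr)=(T^{-1}Tx)D_\varphi^{1/p}=xD_\varphi^{1/p}$, and symmetrically $T^p\circ(T^{-1})^p$ is the identity on the dense subspace $\N D_\psi^{1/p}$; since both compositions are bounded and agree with the identity on a dense subspace, they equal the identity on $L^p(\M)$, resp.\ $L^p(\N)$. Hence $T^p$ has a contractive two-sided inverse, so for every $\xi\in L^p(\M)$ we get $\norm{\xi}_p=\norm{(T^{-1})^pT^p\xi}_p\le\norm{T^p\xi}_p\le\norm{\xi}_p$, forcing $\norm{T^p\xi}_p=\norm{\xi}_p$; and $T^p$ is onto because it admits $(T^{-1})^p$ as a right inverse. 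So $T^p$ is a surjective isometry between $L^p(\M)$ and $L^p(\N)$. The only genuinely delicate step anywhere here is the $L^1$-endpoint bound inside the first assertion; but this is exactly \cite[Lemma 2.2]{JX03}, which is already quoted in the excerpt and so may be taken for granted.
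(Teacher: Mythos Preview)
Your proposal is correct, and it matches the paper's approach exactly for the main assertion: the paper gives no proof at all beyond the citation of \cite[Lemma~2.2]{JX03}, which is precisely what you invoke (your sketch of the interpolation mechanism is accurate and more than the paper supplies). For the ``in particular'' clause the paper says nothing further, while you spell out the natural argument via $(T^{-1})^p$; the only mildly nontrivial step is your passage from ``unital completely positive surjective isometry'' to ``$*$-isomorphism'' (Kadison's theorem plus the fact that a completely positive Jordan $*$-isomorphism has no anti-multiplicative part), but in the paper's actual applications $T$ is always handed to you as a $*$-isomorphism anyway, so this subtlety never bites.
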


Inspired by the above we can consider the following further extension of $T$.
Let $1\le p, r < \infty$.
	\begin{equation}\label{eq-Lp-Lr-ext}
		T^{p,r} : \M D^{\frac{1}{p}}_\varphi \rightarrow \N D^{\frac{1}{r}}_\psi \subseteq L_r(\N),
		\;\; xD^{\frac{1}{p}}_\varphi \mapsto (Tx)D^{\frac{1}{r}}_\psi.
	\end{equation}
In general, there is no guarantee that $T^{p,r}$ can be extended to a bounded map from $L^p(\M)$ into $L_r(\N)$ when $r>p$.

If we apply the above extension \eqref{eq-Lp-ext} to the $q$-Ornstein-Uhlenbeck semigroup $P^q_t$,
we obtain a contractive semigroup $P^{q,p}_t$ on $L^p(\Gamma_q)$.
	$$P^{q,p}_t : L^p(\Gamma_q) \rightarrow L^p(\Gamma_q),\; xD^{\frac{1}{p}}_q \mapsto P^q_t(x)D^{\frac{1}{p}}_q,$$
where $D_q$ is the density operator associated to the vacuum state $\tau_q$.

Now the question is for which $1 < p < r < \infty$ and for which
$t>0$, can we extend $P^q_t$ to a contraction from $L^p(\Gamma_q)$
into $L^r(\Gamma_q)$? More precisely, when does the map
	\begin{equation}\label{eq-OU-Lp-Lr}
		P^{q,p,r}_t : \Gamma_q D^{\frac{1}{p}}_q \rightarrow \Gamma_q D^{\frac{1}{r}}_q, \;\;
		xD^{\frac{1}{p}}_q \mapsto (P^q_t x)D^{\frac{1}{r}}_q
	\end{equation}
can be extended to a contraction from $L^p(\Gamma_q)$ into $L^r(\Gamma_q)$?
Here comes a partial answer to this question, which is one of our main results.
Note that we will simply denote $P^{q,p,r}_t$ again by $P^q_t$.

	\begin{thm}\label{thm-main-if}
	Let $1<p<2$ and $P^q_t$ is the map in \eqref{eq-OU-Lp-Lr}. Then, we have 
		$$\norm{P^q_t}_{L^p \rightarrow L^2} \le 1\;\; \text{if}\;\;
		e^{-2t} \le C\alpha_\mu^{4-\frac{8}{p}}(p-1)$$
	for some universal constant $C>0$, where $\alpha_\mu = \sup_{n \ge 1} \mu_n$.
	\end{thm}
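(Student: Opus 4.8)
The plan is to reduce the type III statement to a finite-dimensional, tracial computation by passing through the \emph{twisted baby Fock} model and Speicher's central limit procedure, exactly as announced in the introduction. First I would construct, for a fixed sequence $\mu=(\mu_i)$ truncated to finitely many terms, the $\eps$-deformed baby Fock algebra of Nou: this is a finite-dimensional von Neumann algebra generated by self-adjoint $\eps$-commuting ``spin'' variables, equipped with a non-tracial vacuum state whose modular data mimics that of $\Gamma_q^\mu$. On this finite-dimensional model the $\eps$-Ornstein--Uhlenbeck semigroup $Q_t$ is again a second quantization, and the target estimate is the corresponding $\|Q_t\|_{L^p\to L^2}\le 1$ under the same quantitative hypothesis $e^{-2t}\le C\alpha_\mu^{4-8/p}(p-1)$. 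This is the content of the ``first main result'' promised for Section~\ref{sec-babyFock}, and I would prove it there by the usual two-point / tensor-power bootstrap: establish the inequality for a single $\eps$-spin (a $2\times 2$ matrix computation with a weighted trace, where the factor $\alpha_\mu^{4-8/p}$ enters through the ratio of the modular eigenvalues), then tensorize. Because we are comparing $L^p$ to $L^2$ and not general $L^p\to L^r$, the single-variable estimate can be handled by expanding the $L^2$ norm of $Q_t x$ against a power-series in $e^{-2t}$ and dominating it by the $L^p$ norm using the non-tracial Haagerup $L^p$ structure; the state-preserving completely positive contraction framework of Proposition~\ref{prop-Lp-ext} and the interpolation identification of $xD_\varphi^{1/p}$ are what make the tensorization legitimate.

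Next I would run Speicher's central limit theorem: taking $N$ independent copies of the $\eps$-baby Fock algebra and averaging $N^{-1/2}\sum_{j=1}^N$ of the spin variables, the joint moments with respect to the product vacuum state converge to the joint moments of the $q$-generalized gaussians $g_{q,i}$ in $\Gamma_q^\mu$. Crucially the semigroups are compatible: the $\eps$-Ornstein--Uhlenbeck semigroup on the $N$-fold product intertwines with $Q_t$ and converges, on the algebra generated by the averaged variables, to $P_t^{q,\mu}$. Since $\|P_t^q\|_{L^p\to L^2}\le 1$ is a statement about finitely many non-commutative polynomials in the generators (the norms in $L^p(\Gamma_q^\mu)$ and $L^2(\Gamma_q^\mu)$ of $x D_q^{1/p}$ and $(P_t^q x)D_q^{1/2}$ are continuous functions of the moments, because these are Haagerup $L^p$ norms computed from the finitely supported modular data), the inequality passes to the limit. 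The constant $C$ and the power $\alpha_\mu^{4-8/p}$ are untouched by the limit because $\alpha_\mu=\sup\mu_i$ is the same for every copy. Finally, a density argument — the $*$-algebra of polynomials in the $g_{q,i}$ is $\sigma$-weakly dense in $\Gamma_q^\mu$, hence $\Gamma_q^\mu D_q^{1/p}$ is dense in $L^p(\Gamma_q^\mu)$ — upgrades the polynomial estimate to the asserted bound on all of $L^p$, and the remark that the almost periodic case suffices (by the ultraproduct/discretization argument quoted from \cite{N06}) removes the ``almost periodic'' restriction.

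The main obstacle I anticipate is controlling the \emph{non-tracial} $L^p$ norms uniformly through the central limit procedure. In the tracial setting of Carlen--Lieb and Biane, the relevant $L^p$ norms are literally moments $\tau((x^*x)^{p/2})$, which are polynomial in the generators and converge trivially; here, with Haagerup $L^p$ spaces, $\|xD_\varphi^{1/p}\|_p$ involves the density operator $D_\varphi$ in a way that is not a priori a continuous function of finitely many moments unless one fixes the modular structure. The resolution is that in the almost periodic model the modular operator acts diagonally with eigenvalues built from the fixed finite sequence $(\mu_i)$, so on any finite-dimensional subalgebra one can write the $L^p$ norm explicitly via the weighted/amalgamated $L^p$ structure, making it genuinely a continuous function of the moments of the spins; one must check that the $\eps$-baby Fock model has been set up so that its modular eigenvalues match those of $\Gamma_q^\mu$ on the nose (this is Nou's construction). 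A secondary, more technical point is verifying that the single-spin $L^p\to L^2$ estimate produces exactly the exponent $4-8/p$ in $\alpha_\mu$ rather than something weaker: this comes from the off-diagonal weighting $\mu_i^{-1}$ versus $\mu_i$ in $g_{q,i}=\mu_i^{-1}\ell(e_i)+\mu_i\ell^*(e_{-i})$, which forces a factor $\mu_i^2$ on each side when converting between the symmetric and the asymmetric normalizations, and squaring again when passing from the $2\times2$ matrix norm inequality to its optimal time — I would isolate this in a clean lemma about a single weighted qubit before tensorizing.
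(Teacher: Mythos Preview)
Your overall architecture --- baby Fock model, then Speicher's central limit, then ultraproduct/density --- matches the paper. The central-limit half of your plan is essentially right, and your worry about passing non-tracial $L^p$ norms through the limit is addressed in the paper exactly by Nou's ultraproduct embedding and a truncation lemma (the paper's Lemma~\ref{lem-Lp-conv}), so that part is fine.

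The genuine gap is in how you propose to prove the baby Fock estimate (Theorem~\ref{thm-eps}). You want to ``establish the inequality for a single $\eps$-spin \dots\ then tensorize.'' This does not work in the non-commutative setting: $L^p\to L^2$ hypercontractivity is \emph{not} multiplicative under tensor products of non-commutative probability spaces --- this failure is precisely what forced Carlen--Lieb to introduce the optimal-uniform-convexity argument in the fermionic case, and it is no less of an obstruction here. Even granting the structural isomorphism $\Gamma_{\la 1,\dots,n\ra}\cong \mathbb{M}_2\otimes \Gamma_{\la 1,\dots,n-1\ra}$ with tensor state (the paper's Proposition~\ref{form}), the semigroup $P_t^\eps$ does \emph{not} factor as $T\otimes P_t^\eps|_{n-1}$ under that isomorphism, because the conjugating unitary $u_n\in\Gamma_{\la 1,\dots,n-1\ra}$ need not commute with the lower-level number operators. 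So neither the algebraic nor the analytic side of a straight tensorization is available.

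What the paper actually does for Theorem~\ref{thm-eps} is an induction on $n$ in the Carlen--Lieb/Biane style, but with a new ingredient: an \emph{asymmetric} optimal convexity inequality (Lemma~\ref{lem-modified-CL}),
\[
\Big(\lambda \norm{A+\mu^2 B}^p_p+(1-\lambda)\norm{A-\mu^{-2}B}^p_p\Big)^{2/p}
\ge \norm{A}^2_p + C(\mu)(p-1)\norm{B}^2_p,
\]
with $\lambda=\frac{1}{1+\mu^4}$. This is the source of the exponent $\alpha_\mu^{4-8/p}$; it does \emph{not} come from a single-qubit hypercontractivity bound. The induction step writes $X=a+\gamma_n b+\gamma_n^* c+y_n d$ with $a,b,c,d\in\Gamma_{\la 1,\dots,n-1\ra}$, uses a sign-flip isometry plus ordinary Ball--Carlen--Lieb convexity to split off the $\gamma_n b+\gamma_n^* c$ part, and then applies the asymmetric lemma (via Proposition~\ref{form}) to the diagonal part $a+y_n d$. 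You should replace the ``tensorize'' step by this inductive convexity argument.
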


We close this section with some precise results on the modular theory for $\Gamma_q$ and $\tau_q$ that we need.
The modular group $\sigma^{\tau_q}_t$ with respect to $\tau_q$ satisfies the following.
    $$\sigma^{\tau_q}_t(g_{q,k}) = \mu_k^{4it}g_{q,k}, \;\; k\ge 1.$$
Thus, $g_{q,k}$ is an analytic element satisfying
    \begin{equation}\label{gq-in-Lp}
    D^{\frac{1}{2p}}_q g_{q,k}  = \mu_k^{\frac{2}{p}} g_{q,k} D^{\frac{1}{2p}}_q, \;\; k\ge 1.
    \end{equation}

\section{Twisted baby Fock and Hypercontractivity of $\eps$-Ornstein-Uhlenbeck semigroup}\label{sec-babyFock}

\subsection{The baby Fock model}

We will briefly describe the twisted baby Fock introduced by A. Nou
(\cite{N06}).  Let $I = \{\pm 1, \pm 2, \cdots, \pm n\}$ be a fixed
index set and $\eps : I\times I\rightarrow \{\pm 1\}$ be a ``choice of
sign" function satisfying
	$$\eps(i,j) = \eps(j,i),\; \eps(i,i)=-1,\; \eps(i,j) = \eps(\abs{i}, \abs{j}),\; \forall i,j\in I.$$
Now, we consider the unital algebra $\A(I,\eps)$ with generators $(x_i)_{i\in I}$ satisfying
	$$x_ix_j - \eps(i,j)x_jx_i = 2\delta_{i,j},\;\; i,j\in I.$$
In particular, we have $x^2_i = 1$, $i\in I$, where $1$ refers to the unit of the algebra.
$\A(I,\eps)$ can be endowed with the involution given by $x^*_i=x_i$.
We will use the following notations for the elements in $\A(I,\eps)$.
	$$x_{\emptyset} := 1 \;\;\text{and}\;\; x_A := x_{i_1}\cdots x_{i_k},\;
	A = \{i_1< \cdots < i_k\} \subseteq I.$$
Then, $\{x_A : A\subseteq I\}$ is a basis for $A(\I,\eps)$.
Let $\phi^\eps : \A(I,\eps) \rightarrow \mathbb{C}$ be the tracial state given by
	$$\phi^\eps(x_A) = \delta_{A,\emptyset}.$$
$\phi^\eps$ give rise to a natural inner product on $\A(I,\eps)$ as follows.
	$$\la x, y \ra := \phi^\eps(y^*x),\;\, x,y\in \A(I,\eps).$$
Let
	$$H = L^2(\A(I,\eps),\phi^\eps)$$
be the corresponding $L^2$-space, then clearly $\{x_A : A\subseteq I\}$ is an orthonormal basis for $H$.

Now we consider left creations $\beta^*_i$ and left annihilations $\beta_i$ in $B(H)$, $i\in I$ in this context.
	$$\beta^*_i(x_A) = 
	\left\{\begin{array}{ll}x_ix_A & \text{if}\; i\notin A\\0 & \text{if}\; i\in A \end{array}\right.,
	\;\;\beta_i(x_A) = 
	\left\{\begin{array}{ll}x_ix_A & \text{if}\; i\in A\\0 & \text{if}\; i\notin A \end{array}\right.,\;\;
	i\in I,\; A\subseteq I.$$
Then using the same parameter $\mu = (\mu_i)_{i\ge 1}$ we define our generalized gaussians
	$$\gamma_i := \mu^{-1}_i\beta^*_i + \mu_i\beta_{-i},\;\; 1\le i\le n.$$
The following relations are known to be satisfied by $\gamma_i$'s (\cite[Lemma 5.2]{N06}).
\begin{equation}\label{rela}\left\{\begin{array}{ll}
	\gamma_i\gamma_j - \eps(i,j)\gamma_j\gamma_i = 0 & i\neq j \in I\\
	\gamma^*_i\gamma_j - \eps(i,j)\gamma_j\gamma^*_i = 0 & i\neq j \in I\\
	\gamma^2_i = (\gamma^*_i)^2 = 0 & i\in I\\
	\gamma^*_i\gamma_i + \gamma_i\gamma^*_i = (\mu^2_i + \mu^{-2}_i)id & i\in I
	\end{array}\right.\end{equation}
Let $\Gamma_{\la 1,\cdots,n \ra}$ be the von Neumann algebra generated
by $\{\gamma_i\}^n_{i=1}$ in $B(H)$ while $\Gamma_{\la n \ra}$ refers
to the von Neumann algebra generated by $\gamma_n$.  It is also known
that (\cite[Lemma 5.2]{N06}) $1$ is a cyclic and separating vector for
$\Gamma_{\la 1,\cdots,n \ra}$, and the above is the faithful GNS
representation of $(\Gamma_{\la 1,\cdots,n \ra}, \tau_n^\eps)$, where
$\tau_n^\eps$ is the vacuum state on $\Gamma_{\la 1,\cdots,n \ra}$
given by $\tau_n^\eps(\cdot) = \la\, \cdot\, 1, 1\ra$. With this
definition, $\Gamma_{\la 1,\cdots,k \ra}$ is not a subalgebra of
$\Gamma_{\la 1,\cdots,n \ra}$, but by the above facts we can 
indeed identify it with the subalgebra generated by $\gamma_1,...,\gamma_k$ in 
 $\Gamma_{\la 1,\cdots,n \ra}$ and $\tau^\eps_k$ is then the restriction of 
$\tau^\eps_n$. We may sometimes  simply write $\tau^\eps$ without any reference to $n$.  Finally, we remark that with the
classical identification $x\mapsto x1$ between $\Gamma_{\la 1,\cdots,n
\ra}$ and $H$, $\tau^\eps$ corresponds to $\phi^\eps$.

We collect some results about $\gamma_i$'s which we need in the sequel.

	\begin{prop}\label{prop-gamma}We have:
		\begin{enumerate}
		\item For all $n\geq 1$, and $i\leq n$, $\sigma^{\tau^\eps}_t(\gamma_i)=\mu^{4it} \gamma_i$ and $\tau^\eps(\gamma_i^*\gamma_i)=\mu_i^{-2}$.	
		\item Let $D_{\la 1,\cdots,n \ra}$ and $D_{\la 1,\cdots,n-1 \ra}$ be the densities of $\tau^\eps$
			restricted to $\Gamma_{\la 1,\cdots,n \ra}$ and $\Gamma_{\la 1,\cdots,n-1 \ra}$, respectively.
			Then we have a natural isometric embedding
				$$L^p(\Gamma_{\la 1,\cdots,n-1 \ra}) \hookrightarrow L^p(\Gamma_{\la 1,\cdots,n \ra}),
				\;\; x D_{\la 1,\cdots,n-1 \ra}^{\frac{1}{p}} \mapsto x D_{\la 1,\cdots,n \ra}^{\frac{1}{p}}.$$
			A similar statement holds for $L^p(\Gamma_{\la n \ra})$.
			\item For any $a \in \Gamma_{\la n \ra}$ and $b,\,c\in \Gamma_{\la 1,\cdots,n-1 \ra}$ we have
				$$\tau^\eps (cab) = \tau^\eps(a) \tau^\eps(cb).$$
			\item $\gamma^*_n\gamma_n$ commutes with $\Gamma_{\la 1,\cdots,n-1 \ra}$ and $D_{\la 1,\cdots,n \ra}$.
		\end{enumerate}
	\end{prop}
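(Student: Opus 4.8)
The plan is to verify the four items more or less directly from the defining relations \eqref{rela} and the description of the vacuum state $\tau^\eps(\cdot) = \la\,\cdot\,1,1\ra$ on the baby Fock space $H$. For item (1), I would first compute the action of the modular group. Since $\gamma_i = \mu_i^{-1}\beta_i^* + \mu_i\beta_{-i}$ and the last relation in \eqref{rela} shows $\gamma_i^*\gamma_i + \gamma_i\gamma_i^* = (\mu_i^2+\mu_i^{-2})\mathrm{id}$, one recognises the same abstract structure as for the $q$-gaussians $g_{q,k}$ recalled at the end of Section \ref{sec-q-AW-alg}; indeed the whole point of Nou's construction is that $\Gamma_{\la 1,\dots,n\ra}$ with state $\tau^\eps$ is a finite-dimensional model sharing the modular data of $\Gamma_q^\mu$. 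So $\sigma^{\tau^\eps}_t(\gamma_i) = \mu_i^{4it}\gamma_i$ follows from the explicit description of the GNS/modular operator on $H$ (the densities $D_{\la 1,\dots,n\ra}$ are diagonal in the basis $\{x_A\}$, with weights recording how many ``$\mu_i$-creation'' letters appear in $x_A$). Then $\tau^\eps(\gamma_i^*\gamma_i) = \la \gamma_i^*\gamma_i 1,1\ra = \la \gamma_i 1,\gamma_i 1\ra = \|\mu_i^{-1}\beta_i^* 1\|^2 = \mu_i^{-2}\|x_i\|^2 = \mu_i^{-2}$, using $\beta_{-i}1 = 0$ and $\beta_i^* 1 = x_i$.

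For item (2), the isometric embedding $L^p(\Gamma_{\la 1,\dots,n-1\ra}) \hookrightarrow L^p(\Gamma_{\la 1,\dots,n\ra})$ is exactly an instance of Proposition \ref{prop-Lp-ext}: the inclusion $\iota$ of $\Gamma_{\la 1,\dots,n-1\ra}$ into $\Gamma_{\la 1,\dots,n\ra}$ (as the subalgebra generated by $\gamma_1,\dots,\gamma_{n-1}$) is a normal, unital, $*$-homomorphism, hence a completely positive contraction, and it is state-preserving since $\tau^\eps_n$ restricts to $\tau^\eps_{n-1}$; being a $*$-homomorphism it automatically intertwines the modular groups. By the ``in particular'' clause of Proposition \ref{prop-Lp-ext} applied to $\iota$ viewed as an onto isometry onto its (modular-invariant) image, $\iota^p$ is isometric, and it sends $x D_{\la 1,\dots,n-1\ra}^{1/p}$ to $x D_{\la 1,\dots,n\ra}^{1/p}$. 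The same argument handles $L^p(\Gamma_{\la n\ra})$. The only mild care needed is to note that the image of $\iota$ is globally invariant under $\sigma^{\tau^\eps_n}_t$, which follows from item (1) since $\sigma_t^{\tau^\eps}$ scales each generator.

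Items (3) and (4) are purely combinatorial, and I would prove them by expanding in the basis $\{x_A\}$. For (3), write $b = \sum_{B} b_B x_B$ and $c = \sum_C c_C x_C$ with $B, C \subseteq \{\pm 1,\dots,\pm(n-1)\}$, and $a = \sum_D a_D$ with $D$ ranging over words in $\gamma_n, \gamma_n^*$ (equivalently in $x_{\pm n}$); by linearity it suffices to take $a, b, c$ basis monomials. Then $cab$ is, up to a sign coming from the commutation relations \eqref{rela}, a scalar multiple of $x_{C'}x_{\pm n\text{-part}}x_{B'}$, and $\tau^\eps$ of it is nonzero only when all letters cancel; since the ``$\pm n$'' letters cannot be cancelled by letters from $B$ or $C$, the vacuum expectation factorises into the $\{\pm n\}$-part and the rest, giving $\tau^\eps(cab) = \tau^\eps(a)\,\tau^\eps(cb)$ after checking that the sign bookkeeping is consistent on both sides (here one uses $\eps(i,j) = \eps(|i|,|j|)$ and that moving an even monomial past anything, or the relevant parity of $a$, produces the same sign on each side — for odd $a$ both sides vanish). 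For (4), $\gamma_n^*\gamma_n$ is an even element in $\Gamma_{\la n\ra}$, and the first two relations in \eqref{rela} show $\gamma_n^{(*)}$ commutes with each $\gamma_j$, $j < n$, up to the sign $\eps(n,j)$; an even monomial in $\gamma_n,\gamma_n^*$ therefore commutes (sign $+1$) with every $\gamma_j$, $j<n$, hence with all of $\Gamma_{\la 1,\dots,n-1\ra}$; commutation with $D_{\la 1,\dots,n\ra}$ follows because $D_{\la 1,\dots,n\ra}$ is a function of the modular generators and $\sigma^{\tau^\eps}_t(\gamma_n^*\gamma_n) = \gamma_n^*\gamma_n$ by item (1), so $\gamma_n^*\gamma_n$ is fixed by the modular group and thus commutes with the density. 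The main obstacle, such as it is, is the careful sign accounting in item (3); everything else is a routine unwinding of the definitions, and in fact most of it is already implicit in \cite{N06}.
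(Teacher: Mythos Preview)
Your proposal is correct and, for items (1), (2), and (4), essentially coincides with the paper's proof: the paper cites \cite[Proposition~5.3]{N06} for (1), invokes Proposition~\ref{prop-Lp-ext} for (2) (after noting the inclusion is state-preserving, just as you do), and gives precisely your two-line commutation-plus-centralizer argument for (4).

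The one substantive difference is item (3). You propose a direct combinatorial expansion in the $\{x_A\}$ basis and a parity/sign argument to separate the $\{\pm n\}$ letters from the rest. The paper instead uses the modular theory to short-circuit the combinatorics: by the KMS property and item (1), the modular group preserves $\Gamma_{\la 1,\dots,n-1\ra}$, so $\tau^\eps(cab)=\tau^\eps\big(ab\,\sigma^{\tau^\eps}_{-i}(c)\big)$ with $\sigma^{\tau^\eps}_{-i}(c)\in\Gamma_{\la 1,\dots,n-1\ra}$; this reduces the statement to the case $c=1$, which is exactly \cite[Lemma~5.4]{N06}. Your route is perfectly valid and more self-contained, but the sign bookkeeping you yourself flag as ``the main obstacle'' (tracking $\eps(n,j)$ when commuting $x_{\pm n}$ past the letters of $B$ and $C$, and checking the parity argument for odd versus even $a$) is genuinely fiddly to write out cleanly, whereas the KMS reduction dispatches the extra variable $c$ in one line and leaves only the already-established independence lemma.
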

\begin{proof}
The computations for the first point can be found in \cite[Proposition 5.3]{N06}. Thus, the natural inclusion of $\Gamma_{\la 1,\cdots,n-1
  \ra}$ to $\Gamma_{\la 1,\cdots,n \ra}$ is state preserving and one
can use Proposition \ref{prop-Lp-ext}. For the third point, it suffices to do it with $c=1$ using the modular theory, but then it is \cite[Lemma 5.4]{N06}.
Finally for $i<n$
$$\gamma^*_n\gamma_n\gamma_i=\gamma^*_n(\eps(n,i)\gamma_i\gamma_n)=\eps(n,i)^2
\gamma_i\gamma^*_n\gamma_n=\gamma_i\gamma^*_n\gamma_n.$$ And
$\sigma^{\tau^\eps}_t(\gamma^*_n\gamma_n)=\sigma^{\tau^\eps}_t(\gamma_n)^*
\sigma^{\tau^\eps}_t(\gamma_n)=\gamma^*_n\gamma_n$, so
$\gamma^*_n\gamma_n$ is in the centralizer of $\tau^\eps$ which exactly means that it commutes with $D_{\la 1,\cdots,n \ra}$.
\end{proof}

\begin{rem}
{\rm
Note that we are dealing with finite dimensional algebras, so we know
that the density $D_{\la 1,\cdots,n \ra}$ belongs to $\Gamma_{\la
1,\cdots,n \ra}$. Moreover all associated $L^p$-spaces can be identified with 
a $p$-Schatten classes provided that we fix a faithful trace on $\Gamma_{\la
1,\cdots,n \ra}$ (instead of looking at the trace on the dual space $\Gamma_{\la 1,\cdots,n \ra}^*$).
}
\end{rem}

The main estimates relies on the position of $\Gamma_{\la 1, \cdots, n-1\ra}$
inside $\Gamma_{\la 1, \cdots, n\ra}$, which will be clarified in the following Proposition.
Now we set
	\begin{equation}\label{y_n}
		y_i = \gamma^*_i\gamma_i - \mu^{-2}_i id,\; 1\le i \le n
	\end{equation}
In this paper we will consider the basis $\{1, \gamma_i, \gamma^*_i,
y_i\}$ of $\Gamma_{\la i \ra}$ which is more suitable than
$\{\gamma_i\gamma^*_i, \gamma_i, \gamma^*_i, \gamma^*_i\gamma_i\}$,
$1\le i \le n$, as their corresponding $L^2$ vectors are orthogonal
and have length 0, 1, 1 and 2, respectively.  Then, for a fixed $1\le
i \le n$, any element $X \in \Gamma_{\la 1, \cdots, n\ra}$ can be {\it
uniquely} expressed in the form
	\begin{equation}\label{unique-rep}
	X = a + \gamma_i b + \gamma^*_i c + y_i d
	\end{equation}
for some $a,b,c,d \in \Gamma_{\la 1, \cdots, n \ra \backslash i}$, the von Neumann algebra generated by $\{\gamma_k : 1\le k\neq i \le n \}$.

	\begin{prop}\label{form}
	There are a $*$-isomorphism $\Phi : \Gamma_{\la 1,\cdots,n \ra}\to \mathbb M_2\otimes \Gamma_{\la 1,...,n-1\ra}$
	and a unitary $u_n\in \Gamma_{\la 1,...,n-1\ra}$ satisfying the following.
		\begin{enumerate}
			\item For $a \in \Gamma_{\la 1,...,n-1\ra}$ we have
			$\Phi(a)=\left[\begin{array}{cc} a & 0\\ 0& a \end{array}\right].$
			\item $\Phi(\gamma_n)=(\mu_n^2+\mu_n^{-2})^{\frac{1}{2}}\left[\begin{array}{cc} 0 & 0\\ u_n& 0 \end{array}\right]$.
			\item $\tau^\eps_n=(\psi\otimes\tau^{\eps}_{n-1})\Phi$, where 
			$\psi \left(\left[\begin{array}{cc} x_{11} & x_{12}\\ x_{21}& x_{22}
			\end{array}\right]\right)=\lambda x_{11} +(1-\lambda)x_{22}$ with $\lambda=\frac 1{1+\mu_n^4}$.
		\end{enumerate}
	\end{prop}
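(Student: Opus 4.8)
The plan is to realise $\Phi$ through a system of $2\times 2$ matrix units sitting inside $A:=\Gamma_{\la 1,\cdots,n\ra}$ that is manufactured out of $\gamma_n$, and then to correct, by conjugation with an inner unitary of the corner, a sign automorphism that unavoidably shows up; this correcting unitary will be exactly the $u_n$ of the statement. Throughout put $B:=\Gamma_{\la 1,\cdots,n-1\ra}$, identified with the subalgebra generated by $\gamma_1,\dots,\gamma_{n-1}$, and $N:=\mu_n^2+\mu_n^{-2}$. First I would note that $\gamma_n^2=(\gamma_n^*)^2=0$ together with $\gamma_n^*\gamma_n+\gamma_n\gamma_n^*=N\,\mathrm{id}$ from \eqref{rela} gives $(\gamma_n^*\gamma_n)^2=N\gamma_n^*\gamma_n$, so $p:=N^{-1}\gamma_n^*\gamma_n$ is a projection, $1-p=N^{-1}\gamma_n\gamma_n^*$, and the four elements $e_{11}:=p$, $e_{22}:=1-p$, $e_{21}:=N^{-1/2}\gamma_n$, $e_{12}:=N^{-1/2}\gamma_n^*$ form a system of matrix units with $e_{11}+e_{22}=1$, every relation $e_{ij}e_{kl}=\delta_{jk}e_{il}$ being immediate from \eqref{rela}. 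By the standard fact on matrix units, $A$ is then $*$-isomorphic to $\mathbb M_2\otimes(pAp)$ via $X\mapsto(e_{1i}Xe_{j1})_{i,j}$.

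The next step is to identify the corner $pAp$ with $B$. Since $\gamma_n^*\gamma_n$ commutes with $B$ by Proposition \ref{prop-gamma}(4), the map $b\mapsto bp$ is a $*$-homomorphism $B\to pAp$; it is onto because $A$ is the linear span of $B$, $\gamma_n B$, $\gamma_n^* B$, $\gamma_n^*\gamma_n B$ while $p\gamma_n p=p\gamma_n^* p=0$ and $p\gamma_n^*\gamma_n p=Np$, and it is injective because Proposition \ref{prop-gamma}(3) gives $\tau^\eps(b^*bp)=\tau^\eps(p)\,\tau^\eps(b^*b)=\tfrac1{1+\mu_n^4}\,\tau^\eps(b^*b)$, so $bp=0$ forces $b=0$ by faithfulness. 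Feeding this identification into the matrix-unit isomorphism and using that $\gamma_n$ commutes or anticommutes with each $\gamma_i$ ($i<n$) according to $\eps(i,n)$, i.e.\ $\gamma_n b=\theta(b)\gamma_n$ for $b\in B$, where $\theta$ is the $*$-automorphism of $B$ with $\theta(\gamma_i)=\eps(i,n)\gamma_i$, a short computation should yield a $*$-isomorphism $\Phi_0:A\to\mathbb M_2\otimes B$ with $\Phi_0(b)=E_{11}\otimes b+E_{22}\otimes\theta(b)$ for $b\in B$ and $\Phi_0(\gamma_n)=N^{1/2}\,E_{21}\otimes 1$, the $E_{ij}$ being the matrix units of $\mathbb M_2$.

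It then remains to absorb $\theta$, and this is where $u_n$ enters. I would check that $\theta$ is inner in $B$, namely $\theta=\mathrm{Ad}(u_n)$ with
$$u_n:=\prod_{\substack{1\le j\le n-1\\ \eps(j,n)=-1}}\big(2N_j^{-1}\gamma_j^*\gamma_j-1\big),\qquad N_j:=\mu_j^2+\mu_j^{-2}.$$
Each factor is a self-adjoint unitary of $B$, the factors mutually commute because $\gamma_j^*\gamma_j$ commutes with $\gamma_k$ and $\gamma_k^*$ for $k\ne j$ (direct from \eqref{rela}), and one checks from \eqref{rela} that $\big(2N_j^{-1}\gamma_j^*\gamma_j-1\big)\gamma_i\big(2N_j^{-1}\gamma_j^*\gamma_j-1\big)$ equals $-\gamma_i$ when $i=j$ and $\gamma_i$ otherwise; hence $u_n$ is a self-adjoint unitary of $B$ with $u_n\gamma_i u_n=\eps(i,n)\gamma_i$, i.e.\ $u_n b u_n=\theta(b)$ for all $b\in B$. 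Conjugating by $W:=E_{11}\otimes 1+E_{22}\otimes u_n\in\mathbb M_2\otimes B$ and setting $\Phi:=\mathrm{Ad}(W)\circ\Phi_0$ turns $\Phi_0(b)=E_{11}\otimes b+E_{22}\otimes\theta(b)$ into $\Phi(b)=E_{11}\otimes b+E_{22}\otimes b$, which is (1), and $\Phi_0(\gamma_n)=N^{1/2}E_{21}\otimes1$ into $\Phi(\gamma_n)=N^{1/2}E_{21}\otimes u_n$, which is (2).

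For (3) I would use the unique decomposition \eqref{unique-rep} at $i=n$: write $X=a+\gamma_n b+\gamma_n^* c+y_n d$ with $a,b,c,d\in B$. On the algebra side, Proposition \ref{prop-gamma}(1),(3) together with $\tau^\eps(\gamma_n)=0$ give $\tau^\eps_n(\gamma_n b)=\tau^\eps_n(\gamma_n^* c)=0$ and $\tau^\eps_n(y_n d)=(\tau^\eps(\gamma_n^*\gamma_n)-\mu_n^{-2})\tau^\eps(d)=0$, so $\tau^\eps_n(X)=\tau^\eps_{n-1}(a)$. On the matrix side $\Phi(a)=E_{11}\otimes a+E_{22}\otimes a$, the images of $\gamma_n b$ and $\gamma_n^* c$ are off-diagonal, and $\Phi(y_n)=\mu_n^2\,E_{11}\otimes 1-\mu_n^{-2}\,E_{22}\otimes 1$ because $y_n=\gamma_n^*\gamma_n-\mu_n^{-2}=Np-\mu_n^{-2}$ and $\Phi(p)=E_{11}\otimes 1$; hence $(\psi\otimes\tau^\eps_{n-1})\Phi(X)=\tau^\eps_{n-1}(a)+(\lambda\mu_n^2-(1-\lambda)\mu_n^{-2})\tau^\eps_{n-1}(d)=\tau^\eps_{n-1}(a)$, the coefficient of $\tau^\eps_{n-1}(d)$ vanishing precisely because $\lambda=\tfrac1{1+\mu_n^4}$. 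The one step I expect to be delicate rather than routine is the middle one: observing that the bare matrix-unit splitting produces the \emph{twisted} diagonal $E_{11}\otimes b+E_{22}\otimes\theta(b)$ instead of $E_{11}\otimes b+E_{22}\otimes b$, and recognising $\theta$ as $\mathrm{Ad}(u_n)$ for the explicit self-adjoint unitary $u_n\in B$ above, which is exactly the unitary appearing in the statement.
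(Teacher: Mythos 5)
Your proof is correct, and it takes a genuinely different route from the paper's. The paper factors $\Gamma_{\la 1,\cdots,n\ra}$ as $\Gamma_{\la n\ra}\otimes C_n$ where $C_n$ is the relative commutant of $\Gamma_{\la n\ra}$; bijectivity of the multiplication map is obtained from an explicit surjectivity computation plus a dimension count (which requires the induction showing $\dim C_n=2^{2n-2}$ and, along the way, $\Gamma_{\la 1,\cdots,k\ra}\cong\mathbb M_{2^k}$), and the unitary $u_n$ is then produced \emph{abstractly}: the two corner embeddings $\rho_1,\rho_2$ of $\Gamma_{\la 1,\cdots,n-1\ra}$ into $C_n$ differ by an automorphism of a full matrix algebra, hence an inner one. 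You instead build the $2\times 2$ matrix units directly from $\gamma_n$ (using $\gamma_n^2=(\gamma_n^*)^2=0$ and $\gamma_n^*\gamma_n+\gamma_n\gamma_n^*=N$ to see that $p=N^{-1}\gamma_n^*\gamma_n$ is a projection), identify the corner $pAp$ with $\Gamma_{\la 1,\cdots,n-1\ra}$ via $b\mapsto bp$ — injectivity coming from Proposition \ref{prop-gamma}(3) and faithfulness rather than from a dimension count — and, most notably, you exhibit the twist $\theta(\gamma_i)=\eps(i,n)\gamma_i$ as $\mathrm{Ad}(u_n)$ for the \emph{explicit} self-adjoint unitary $u_n=\prod_{\eps(j,n)=-1}\bigl(2N_j^{-1}\gamma_j^*\gamma_j-1\bigr)$, whose action on the generators is a two-line check from \eqref{rela}. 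This avoids both the dimension-counting induction and the appeal to innerness of automorphisms of matrix algebras, and it yields a concrete formula for $u_n$; what it does not give you (and the paper's argument does, as a byproduct) is the identification $\Gamma_{\la 1,\cdots,k\ra}\cong\mathbb M_{2^k}$. All the computations you leave as "short" do check out: $e_{12}be_{21}=\theta(b)p$, $\Phi_0(\gamma_n)=N^{1/2}E_{21}\otimes 1$, $\Phi(p)=E_{11}\otimes 1$, and the coefficient $\lambda\mu_n^2-(1-\lambda)\mu_n^{-2}=0$ in part (3).
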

\begin{proof}

The relations \ref{rela} give us a $*$-isomorphism $\sigma : \Gamma_{\la n\ra} \to \mathbb M_2$
with $\sigma(\gamma_n)=(\mu_n^2+\mu_n^{-2})^{1/2}e_{21}$.
Let $C_n$ be the relative commutant of $\Gamma_{\la n\ra}$ in $\Gamma_{\la 1,\cdots,n \ra}$.
Then, the multiplication map $\psi: \Gamma_{\la n\ra}\otimes C_n\to \Gamma_{\la 1,\cdots,n \ra}$ is a $*$-homorphism.
Actually, $\psi$ is a $*$-isomorphism. Indeed, we can easily check that $\psi$ is an onto map.
For example, we have
	$$\psi\big( (\eps(1,n)\gamma^*_n\gamma_n + \gamma_n\gamma^*_n)
	\otimes \gamma_1 (\eps(1,n)\gamma^*_n\gamma_n +
	\gamma_n\gamma^*_n) \big) = (\mu^2_n +
	\mu^{-2}_n)^2\gamma_1.$$ The fact that $\gamma_1
	(\eps(1,n)\gamma^*_n\gamma_n + \gamma_n\gamma^*_n) \in C_n$ is
	a straightforward calculation.  Moreover, $\psi$ must be a 1-1
	map, since dim$C_n = 2^{2n-2}$, which can be checked by a
	simple induction and the following observation.  Let $X = a +
	\gamma_1 b + \gamma^*_1 c + y_1 d$, $a,b,c,d \in \Gamma_{\la
	2, \cdots, n \ra}$.  Then by the uniqueness of the expression
	\eqref{unique-rep} we have
	$$X \in C_n\;\; \Leftrightarrow\;\; a,d\in \Gamma'_{\la n
	\ra}\cap \Gamma_{\la 2,\cdots,n \ra} \;\; \text{and}\;\;
	\left\{\begin{array}{cc}\eps(1,n)\gamma_n b = b\gamma_n, &
	\eps(1,n)\gamma^*_n b = b\gamma^*_n \\ \eps(1,n)\gamma_n c =
	c\gamma_n, & \eps(1,n)\gamma^*_n c =
	c\gamma^*_n\end{array}\right..$$ Now we consider another
	$*$-isomorphism $\pi = (\sigma \otimes Id)\circ\psi^{-1} :
	\Gamma_{\la 1,\cdots,n \ra}\to \mathbb M_2\otimes C_n$. Since
	$\gamma_n^*\gamma_n$ commutes with $\Gamma_{\la 1,\cdots,n-1
	\ra}$ there are $*$-homomorphisms $\pi_i : \Gamma_{\la
	1,\cdots,n-1 \ra} \to e_{ii}\otimes C_n,\; a\mapsto
	(e_{ii}\otimes 1)\pi(a)(e_{ii}\otimes 1)$, $i=1,2$.
	Actually, $\pi_i$'s are $*$-isomorphisms since dim$
	\Gamma_{\la 1,\cdots,n-1 \ra} = 2^{2n-2} = \text{dim}C_n $
	(\cite[Lemma 5.2]{N06}) and they are injective. Indeed, we
	have
	\begin{align*}
		\pi^{-1}\big((e_{ii}\otimes id)\pi(a)(e_{ii}\otimes id)\big)
		& = (\mu_n^2+\mu_n^{-2})^{-2} \gamma^*_n\gamma_n a \gamma^*_n\gamma_n\\
		& = (\mu_n^2+\mu_n^{-2})^{-1} \gamma^*_n\gamma_n a.
	\end{align*}
Then, the injectivity comes from the uniqueness of the expression \eqref{unique-rep}.

Now by identifying $e_{ii}\otimes C_n$ and $C_n$ we get two $*$-isomorphisms
	$$\rho_1, \rho_2 : \Gamma_{\la 1,\cdots,n-1 \ra} \to C_n\;\; \text{such that}\;\;
	\pi(a) = \left[\begin{array}{cc} \rho_1(a) & 0\\ 0& \rho_2(a) \end{array}\right].$$
The above $*$-isomorphisms enable us to conclude that $\Gamma_{\la 1,\cdots,k \ra} \cong \mathbb M_{2^k}$, $k\ge 1$ by a simple induction.
Thus, any automorphism on $\Gamma_{\la 1,\cdots,n-1 \ra}$ is inner, so that there is a unitary $u_n \in \Gamma_{\la 1,\cdots,n-1 \ra}$
such that $\rho^{-1}_1\circ \rho_2(a) = u^*_n a u_n$, $a\in \Gamma_{\la 1,\cdots,n-1 \ra}$.
Finally we define $\Phi : \Gamma_{\la 1,\cdots,n \ra}\to \mathbb M_2\otimes \Gamma_{\la 1,...,n-1\ra}$ by
	$$\Phi(x) = \left[\begin{array}{cc} 1 & 0\\ 0& u^*_n \end{array}\right]
	\big[(I\otimes\rho^{-1}_1)\circ \pi\big](x) \left[\begin{array}{cc} 1 & 0\\ 0& u_n \end{array}\right].$$

Then, the first two assertions easily follow from the construction of $\Phi$ and $u_n$.
The formula for $\tau_n^\eps$ is a consequence of Proposition \ref{prop-gamma} (3) and the definition of $\Phi$.
For example, we have 
$\tau^\eps_n(\gamma_nb) = 0 =
\psi\otimes \tau^\eps_{n-1}\Big( \left[\begin{array}{cc} 0 & 0\\ (\mu_n^2+\mu_n^{-2})^{\frac{1}{2}}u_n b& 0 \end{array}\right] \Big)$
for $b\in \Gamma_{\la 1,...,n-1\ra}$, and
$\tau^\eps_n(y_n d) = 0  =
\psi\otimes \tau^\eps_{n-1}\Big( \left[\begin{array}{cc} \mu^2_n d & 0\\ 0 & -\mu^{-2}_n d \end{array}\right] \Big)$
for $d\in \Gamma_{\la 1,...,n-1\ra}$.
\end{proof}

Now we consider the number operator $N_\eps$ on $H$ given by $N_\eps =
\sum_{i\in I} \beta^*_i\beta_i$. Then, for any $A \subseteq I$ we have
$N_\eps x_A = \abs{A}x_A$. Since $1 \in H$ is separating and cyclic for 
$\Gamma_{\la 1,\cdots,n \ra}$ we define the $\eps$-Ornstein-Uhlenbeck semigroup $P^\eps_t : \Gamma_{\la 1,\cdots,n
\ra} \rightarrow \Gamma_{\la 1,\cdots,n \ra}$ by
	\begin{equation}\label{eps-OU-semigp}
	P^\eps_t(X)1 = e^{-tN_\eps}(X1),\;\; X\in \Gamma_{\la 1,\cdots,n \ra}.
	\end{equation}
To make this definition more explicit, any element in $\Gamma_{\la
1,\cdots,n \ra}$ can be written as a linear combination of products
$w_1...w_k$ where $w_i\in \{id,\gamma_i,\gamma_i^*,y_i\}$, $1\le i \le k$. The number
operator counts $0$ for $id$, $1$ for $\gamma_i$, $\gamma_i^*$ and 2
for $y_i$; for instance
$P^\eps_t(y_4\gamma_2^*\gamma_1)= e^{-4t}y_4\gamma_2^*\gamma_1$. This can be
checked by a straightforward induction.

In comparison to the $q$-Fock space setting, it not easy to see that this
defines a completely positive semigroup. There is no general second
quantization in the baby Fock model. Nevertheless, such a procedure
exists for some diagonal contractions. To do so, define similarly the 
$i$-number operator $N_i$ on $H$ and $T^t_i$ on $\Gamma_{\la 1,\cdots,n \ra}$ by
	$$N_i = \beta^*_i\beta_i+\beta^*_{-i}\beta_{-i},\; 1\le i \le n\;\;\text{and}\;\;
	T^t_i(X)1 = e^{-tN_i}(X1).$$
It counts only the letter $i$  as explained above.

\begin{prop}
For any $t\geq 0$, the operators $T^t_i$ $(1\le i \le n)$ are completely positive and state
preserving on $\Gamma_{\la 1,\cdots,n \ra}$, and so is $P^\eps_t$.
\end{prop}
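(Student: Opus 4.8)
The plan is to reduce everything to an elementary positivity check on $2\times2$ matrices, by transporting the operators through the structural isomorphism of Proposition~\ref{form}. For notational simplicity take $i=n$; the general case is identical, peeling off $\gamma_i$ instead of $\gamma_n$ in the proof of Proposition~\ref{form}. Let $\Phi:\Gamma_{\la 1,\cdots,n\ra}\to\mathbb M_2\otimes\Gamma_{\la 1,\cdots,n-1\ra}$ and $u_n$ be as in that proposition, and write $X\in\Gamma_{\la 1,\cdots,n\ra}$ in the unique form \eqref{unique-rep}, $X=a+\gamma_n b+\gamma_n^* c+y_n d$ with $a,b,c,d\in\Gamma_{\la 1,\cdots,n-1\ra}$. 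Using Proposition~\ref{form} together with $\Phi(\gamma_n^*)=\Phi(\gamma_n)^*$ and $\Phi(y_n)=\Phi(\gamma_n^*\gamma_n)-\mu_n^{-2}=\mathrm{diag}(\mu_n^2,-\mu_n^{-2})$, one gets
$$\Phi(X)=\left[\begin{array}{cc} a+\mu_n^2 d & (\mu_n^2+\mu_n^{-2})^{\frac12}\,u_n^* c\\ (\mu_n^2+\mu_n^{-2})^{\frac12}\,u_n b & a-\mu_n^{-2} d\end{array}\right].$$
Since $T^t_n X=a+e^{-t}\gamma_n b+e^{-t}\gamma_n^* c+e^{-2t}y_n d$, a short computation shows that the off-diagonal entries of $\Phi(T^t_n X)$ are $e^{-t}$ times those of $\Phi(X)$, while its diagonal entries are fixed scalar convex combinations of the diagonal entries $x_{11},x_{22}$ of $\Phi(X)$. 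Hence $\Phi\circ T^t_n\circ\Phi^{-1}=\theta_t\otimes\mathrm{id}_{\Gamma_{\la 1,\cdots,n-1\ra}}$, where $\theta_t:\mathbb M_2\to\mathbb M_2$ is the \emph{scalar} map
$$\theta_t\left[\begin{array}{cc}x_{11}&x_{12}\\x_{21}&x_{22}\end{array}\right]=\left[\begin{array}{cc}\lambda' x_{11}+(1-\lambda') x_{22}& e^{-t}x_{12}\\ e^{-t}x_{21}& \mu' x_{11}+(1-\mu') x_{22}\end{array}\right],\qquad \lambda'=\frac{1+e^{-2t}\mu_n^4}{1+\mu_n^4},\ \ \mu'=\frac{1-e^{-2t}}{1+\mu_n^4}.$$
In particular $T^t_n$ maps $\Gamma_{\la 1,\cdots,n\ra}$ into itself.

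Next I would check that $\theta_t$ is completely positive. Being a scalar map on $\mathbb M_2$, $\theta_t\otimes\mathrm{id}_{\Gamma_{\la 1,\cdots,n-1\ra}}$ is completely positive as soon as $\theta_t$ is, and by Choi's criterion this amounts to positivity of the matrix
$$\left[\begin{array}{cccc}\lambda'&0&0&e^{-t}\\0&\mu'&0&0\\0&0&1-\lambda'&0\\e^{-t}&0&0&1-\mu'\end{array}\right].$$
For $t\ge0$ the four diagonal coefficients lie in $[0,1]$, and the only non-trivial $2\times2$ principal minor equals $\lambda'(1-\mu')-e^{-2t}=\frac{\mu_n^4(1-e^{-2t})^2}{(1+\mu_n^4)^2}\ge0$. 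Thus $\theta_t$, hence $\theta_t\otimes\mathrm{id}=\Phi T^t_n\Phi^{-1}$, hence $T^t_n$, are completely positive. For state preservation, one checks $\psi\circ\theta_t=\psi$ with $\psi(\cdot)=\lambda x_{11}+(1-\lambda)x_{22}$, $\lambda=(1+\mu_n^4)^{-1}$: the coefficient of $x_{11}$ in $\psi\circ\theta_t$ is $\lambda\lambda'+(1-\lambda)\mu'=\big[(1+e^{-2t}\mu_n^4)+\mu_n^4(1-e^{-2t})\big]/(1+\mu_n^4)^2=\lambda$, and that of $x_{22}$ is then automatically $1-\lambda$. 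Combined with Proposition~\ref{form}(3) this gives $\tau^\eps_n\circ T^t_n=(\psi\otimes\tau^\eps_{n-1})(\theta_t\otimes\mathrm{id})\Phi=(\psi\otimes\tau^\eps_{n-1})\Phi=\tau^\eps_n$.

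Finally, for $P^\eps_t$ I would use that the $i$-number operators $N_i=\beta_i^*\beta_i+\beta_{-i}^*\beta_{-i}$ are simultaneously diagonalised by the orthonormal basis $\{x_A\}_{A\subseteq I}$ of $H$, since $N_i x_A=\#(A\cap\{i,-i\})\,x_A$. Consequently they pairwise commute and $N_\eps=\sum_{i=1}^n N_i$, so $e^{-tN_\eps}=e^{-tN_1}\cdots e^{-tN_n}$. Applying this to $X1$ and using that $1$ is separating for $\Gamma_{\la 1,\cdots,n\ra}$ (and that each $T^t_i$ preserves that algebra, by the first part), we obtain $P^\eps_t=T^t_1\circ T^t_2\circ\cdots\circ T^t_n$; as a composition of completely positive, state preserving maps it is completely positive and state preserving. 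The only step demanding real care is the correct identification of the transported map $\theta_t$, and this is exactly where the choice of the orthogonal basis $\{1,\gamma_i,\gamma_i^*,y_i\}$ of $\Gamma_{\la i\ra}$ matters: it is what makes the diagonal part of $\theta_t$ a genuine scalar convex combination, so that complete positivity collapses to the trivial inequality $(1-e^{-2t})^2\ge0$.
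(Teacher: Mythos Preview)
Your proof is correct and follows essentially the same route as the paper's: transport $T^t_n$ through the isomorphism $\Phi$ of Proposition~\ref{form}, identify the resulting map as $\theta_t\otimes\mathrm{id}$ for a scalar $\theta_t$ on $\mathbb M_2$, and verify complete positivity via the Choi matrix (your $\lambda',\mu',1-\lambda',1-\mu'$ are exactly the paper's four diagonal entries, and the key minor reduces to $\lambda(1-\lambda)(1-e^{-2t})^2\ge0$). Your treatment is in fact slightly more explicit than the paper's on two points---the direct check that $\psi\circ\theta_t=\psi$ for state preservation, and the justification of the factorisation $P^\eps_t=T^t_1\cdots T^t_n$ via simultaneous diagonalisation of the $N_i$---but the overall strategy is identical.
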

\begin{proof}
The second assertion follows from the first one as $P_t^\eps=T_1^t \cdots T_n^t$.

For simplicity we only check the case $i=n$.  Let $a,b,c,d \in \Gamma_{\la 1,\cdots,n-1 \ra}$, we have
$T_n^t(a)=a$, $T_n^t(\gamma_n b)=e^{-t}\gamma_n b$, $T_n^t(\gamma^*_n c)=e^{-t}\gamma^*_n c$ and
$T_n^t(y_n d)=e^{-2t}y_n d$.  We use Proposition \ref{form} to transfer $T_n^t$
to $\widetilde T_n^t = \Phi \circ T_n^t \circ \Phi^{-1}$ on $\mathbb M_2\otimes \Gamma_{\la 1,\cdots,n \ra}$.
From the formula for $\Phi(\gamma_n)$ it follows that
$\widetilde T^t_n \left[\begin{array}{cc} 0 & b\\ 0& 0 \end{array}\right]
=e^{-t}\left[\begin{array}{cc} 0 & b\\ 0& 0 \end{array}\right]$
and since
$\Phi(a+y_nd)=\left[\begin{array}{cc} a+\mu_n^2d & 0\\ 0& a-\mu_n^{-2}d \end{array}\right]$
for $a,d \in \Gamma_{\la 1,\cdots,n-1 \ra}$ we have
	$$\widetilde T^t_n \left[\begin{array}{cc} a+\mu_n^2d & 0\\ 0& a-\mu_n^{-2}d \end{array}\right]
	=e^{-t}\left[\begin{array}{cc} a+\mu_n^2 e^{-2t}d & 0\\ 0& a-\mu_n^{-2}e^{-2t}d \end{array}\right].$$

Thus, we obtain that $\widetilde T_n^t = T\otimes Id$,
where $T(e_{12})=e^{-t} e_{12}$, $T(e_{21})=e^{-t} e_{21}$, $T(1)=1$ and
$T(\mu_n^2e_{11}-\mu_n^{-2}e_{22})=e^{-2t}(\mu_n^2e_{11}-\mu_n^{-2}e_{22})$.
We get with $\lambda=\frac 1{1+\mu_n^4}$
\begin{eqnarray*}
T(e_{11})&=& \lambda (1+e^{-2t}\mu_n^4) e_{11}+\lambda
(1-e^{-2t})e_{22} \\ T(e_{22})&=&(1-\lambda)(1-e^{-2t})
e_{11}+(1-\lambda)(1-e^{-2t}\mu_n^{-4})e_{22}\end{eqnarray*}
The Choi's matrice $C=(T(e_{i,j}))_{i,j}$ associated to $T$ is 
$$\left[\begin{array}{cccc}
\lambda (1+e^{-2t}\mu_n^4) & 0 & 0& e^{-t} \\
0 & \lambda (1-e^{-2t}) & 0 & 0 \\
0 & 0 & (1-\lambda)(1-e^{-2t}) & 0 \\
e^{-t} & 0 & 0& (1-\lambda)(1+e^{-2t}\mu_n^{-4})
\end{array}\right]$$
Since $\mu^4_n=\frac {1-\lambda}\lambda$ and 
\begin{eqnarray*}
 \lambda (1-\lambda) (1+e^{-2t}\mu_n^4)(1+e^{-2t}\mu_n^{-4})-e^{-2t}
&=& \lambda (1-\lambda)(1-e^{-2t})^2\geq0,
\end{eqnarray*}
$C$ is positive and $T$ is completely positive.

The state preserving property follows from Proposition \ref{prop-gamma}.
\end{proof}

\subsection{Main estimates}

We start with the main statement.
	\begin{thm}\label{thm-eps}
	Let $1<p\le 2$.
		$$\norm{P^\eps_t}_{L^p \rightarrow L^2} \le 1\;\;
		\text{if}\;\; e^{-2t} \le C\alpha_\mu^{4-\frac{8}{p}}(p-1)$$
	for some universal constant $C>0$, where $\alpha_\mu = \sup_{n \ge 1} \mu_n$.
	\end{thm}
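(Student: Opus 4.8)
The strategy is an induction on $n$, the number of generators, using the tensor decomposition $\Gamma_{\la 1,\dots,n\ra}\cong\mathbb M_2\otimes\Gamma_{\la 1,\dots,n-1\ra}$ from Proposition \ref{form} to peel off one variable at a time, exactly as in the Carlen--Lieb/Biane scheme. Write any $X\in\Gamma_{\la 1,\dots,n\ra}$ uniquely as $X=a+\gamma_n b+\gamma_n^* c+y_n d$ with $a,b,c,d\in\Gamma_{\la 1,\dots,n-1\ra}$ as in \eqref{unique-rep}; then $P^\eps_t=T_1^t\cdots T_n^t$ and $T_n^t X=a+e^{-t}\gamma_n b+e^{-t}\gamma_n^* c+e^{-2t}y_n d$, while $T_1^t\cdots T_{n-1}^t$ acts as $P^\eps_t$ (at $n-1$ variables) on each of $a,b,c,d$. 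Under $\Phi$ the element $X$ becomes a $2\times 2$ matrix over $\Gamma_{\la 1,\dots,n-1\ra}$ whose entries are explicit combinations of $a,b,c,d$ and $u_n$, and the state is $\psi\otimes\tau^\eps_{n-1}$ with $\psi=\mathrm{diag}(\lambda,1-\lambda)$, $\lambda=1/(1+\mu_n^4)$. So the whole question reduces to a two-point (i.e.\ $\mathbb M_2$ with an unbalanced state) hypercontractivity estimate with values in the $L^p$ spaces of $\Gamma_{\la 1,\dots,n-1\ra}$, tensorized with the inductive hypothesis.

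Concretely, I would first establish the base case $n=1$: here $\Gamma_{\la 1\ra}\cong\mathbb M_2$ with the state $\psi_\lambda$, and one must show the three-parameter map $\mathrm{diag}\mapsto$ (scaling $e_{11}-$ weighted $e_{22}$ by $e^{-2t}$, off-diagonal by $e^{-t}$) is a contraction $L^p(\mathbb M_2,\psi_\lambda)\to L^2(\mathbb M_2,\psi_\lambda)$ under the stated bound on $e^{-2t}$; this is a finite-dimensional, essentially $2\times 2$ computation in Haagerup $L^p$, where the density is $\mathrm{diag}(\lambda,1-\lambda)$ and $\mu_1^4=(1-\lambda)/\lambda$, and the worst constant is what forces the $\alpha_\mu^{4-8/p}(p-1)$ shape (the $\mu$-dependence enters through how $D_q^{1/p}$ rescales $\gamma$ versus $\gamma^*$, cf.\ \eqref{gq-in-Lp}). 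For the inductive step, given $X$ with entries in $\Gamma_{\la 1,\dots,n-1\ra}$, I would factor $P^\eps_t = (\mathrm{id}_{\mathbb M_2}\otimes P^\eps_t)\circ(\text{two-point map}\otimes\mathrm{id})$ after transporting through $\Phi$, apply the inductive hypothesis $\|P^\eps_t\|_{L^p(\Gamma_{\la 1,\dots,n-1\ra})\to L^2}\le1$ entrywise, and then close the loop with the $n=1$ estimate now run with operator-valued (i.e.\ $L^p(\Gamma_{\la 1,\dots,n-1\ra})$-valued) coefficients — which is legitimate because the relevant two-point inequality, being a statement about $2\times 2$ Choi-type positivity, tensorizes. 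One subtlety: the isometric embeddings $L^p(\Gamma_{\la 1,\dots,n-1\ra})\hookrightarrow L^p(\Gamma_{\la 1,\dots,n\ra})$ and the compatibility of densities (Proposition \ref{prop-gamma}(2),(4)) must be invoked to make sense of ``entrywise'' norms, and the unitary $u_n$ must be absorbed harmlessly (it commutes past the state since it lies in $\Gamma_{\la 1,\dots,n-1\ra}$).

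The main obstacle is the operator-valued two-point inequality in the inductive step: a clean scalar hypercontractivity bound on $\mathbb M_2$ does not automatically upgrade to one with coefficients in a non-commutative $L^p$ space, because $L^p\to L^2$ norms of such block maps are not obviously controlled by the scalar ones when $p<2$ (there is no Minkowski-type inequality in the favorable direction). The way around it, following Biane, is to phrase the two-point estimate itself as a completely positive / completely contractive statement — i.e.\ prove the stronger assertion that the relevant map is a complete contraction from the (operator space) $L^p$ to $L^2$, or equivalently verify a single matrix positivity condition (a Choi matrix with $L^p(\Gamma_{\la 1,\dots,n-1\ra})$ entries being positive) that is insensitive to the coefficient algebra. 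Getting the constant $C$ to come out \emph{universal} (independent of $p$, $n$ and the $\mu_i$'s except through $\alpha_\mu$) while doing this is the delicate accounting; everything else is bookkeeping with the decomposition of Proposition \ref{form}.
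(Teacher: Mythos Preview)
Your high-level plan --- induction on $n$ via the decomposition $X=a+\gamma_n b+\gamma_n^*c+y_n d$ --- matches the paper, but the mechanism you propose for the inductive step does not work, and it is not the one the paper uses.

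First, the factorization $\Phi\circ P^\eps_t\circ\Phi^{-1}=(\mathrm{id}_{\mathbb M_2}\otimes P^{\eps,n-1}_t)\circ(T\otimes\mathrm{id})$ is false. It is true that $\Phi\circ T_n^t\circ\Phi^{-1}=T\otimes\mathrm{id}$, but $\Phi\circ(T_1^t\cdots T_{n-1}^t)\circ\Phi^{-1}$ is \emph{not} $\mathrm{id}\otimes P^{\eps,n-1}_t$: since $\Phi(\gamma_n)\propto e_{21}\otimes u_n$ and $T_1^t\cdots T_{n-1}^t$ fixes $\gamma_n$, you would need $P^{\eps,n-1}_t(u_n)=u_n$, which has no reason to hold. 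Your remark that $u_n$ ``commutes past the state'' addresses the density but not the semigroup; $u_n$ does not commute with $P^{\eps,n-1}_t$, so it cannot be absorbed.

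Second, even setting $u_n$ aside, the claim that the two-point $L^p\to L^2$ inequality ``tensorizes'' via a Choi-matrix positivity argument is a category error. Choi's criterion characterizes complete positivity, not $L^p\to L^2$ contractivity; there is no matrix-positivity condition that encodes hypercontractivity and is automatically insensitive to the coefficient algebra. The difficulty that hypercontractivity does not tensorize naively is precisely what the Carlen--Lieb/Biane method is designed to circumvent, and it does so by a different route.

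What the paper actually does: it never tries to push the full semigroup through $\Phi$. Instead it computes $\|P^\eps_t(X)D^{1/2}\|_2^2$ as a sum of four terms in $a,b,c,d$ (by orthogonality, Proposition \ref{prop-gamma}(3)), and then bounds $\|XD^{1/p}\|_p^2$ from below by a weighted sum of $\|aD^{1/p}\|_p^2,\dots,\|dD^{1/p}\|_p^2$. The key inputs for this lower bound are (i) the sign-flip $\gamma_n\mapsto-\gamma_n$ combined with the Ball--Carlen--Lieb optimal $2$-uniform convexity inequality for Schatten norms, which separates $(a+y_nd)$ from $(\gamma_nb+\gamma_n^*c)$; (ii) an \emph{asymmetric} convexity inequality (Lemma \ref{lem-modified-CL}) to handle the $(a+y_nd)$ piece --- this is where Proposition \ref{form} is used and where the $\mu$-dependence $\alpha_\mu^{4-8/p}$ enters; and (iii) a disjoint-support argument plus Lemma \ref{lem-gamma-estimate} for the $(\gamma_nb+\gamma_n^*c)$ piece. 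Once both sides are expressed as sums over $a,b,c,d$, the induction hypothesis is applied term by term. The convexity inequality \emph{is} the operator-valued statement you are looking for --- it holds for arbitrary matrices $A,B$, which is exactly what makes the induction close --- and you do not mention it at all.
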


Before proceeding to the proof, we collect some lemmas.
The first and the most crucial one is an asymmetric version of optimal convexity inequality (\cite{BCL, CL93}).
	\begin{lem}\label{lem-modified-CL}
	Let $1<p\le 2$, $\mu\geq1$ and $\lambda=\frac 1{1+\mu^4}$. For any $A, B\in \mathbb M_n$, $n\ge 1$ we have
		$$\Big(\lambda \norm{A+\mu^2 B}^p_p+(1-\lambda)\norm{A-\mu^{-2}B}^p_p\Big)^{\frac{2}{p}}
		\ge \norm{A}^2_p + C(p-1)\norm{B}^2_p$$
	where $C = C(\mu) = \frac{1}{3}\mu^{-4}$ for $1<p \le \frac{3}{4}$
	and $C = \frac{1}{3}\mu^{8-\frac{16}{p}}$ for $\frac{3}{4}<p\le 2$.
	\end{lem}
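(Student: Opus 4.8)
The plan is to reduce the matrix statement to a two-point inequality and then exploit the classical optimal convexity (uniform convexity) inequality of Ball--Carlen--Lieb for Schatten classes. Concretely, consider the two-point probability space carrying mass $\lambda$ and $1-\lambda$, and form the $\ell^p(\{1,2\};S^n_p)$-valued ``vector'' with entries $A+\mu^2B$ and $A-\mu^{-2}B$. Its weighted $\ell^p$-norm is exactly the left-hand side. The idea is to write this pair as an affine image of the pair $(A,B)$ and compare, via the two-dimensional analogue, with $\|A\|_p^2+C(p-1)\|B\|_p^2$. The natural tool is the sharp inequality stating that for $1<p\le 2$ and $f,g\in S^n_p$,
\begin{equation*}
\Bigl(\tfrac12\|f+g\|_p^p+\tfrac12\|f-g\|_p^p\Bigr)^{2/p}\ge \|f\|_p^2+(p-1)\|g\|_p^2,
\end{equation*}
which is the Ball--Carlen--Lieb optimal 2-uniform convexity inequality (\cite{BCL}, already used in \cite{CL93}). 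The task is to massage the asymmetric weights $\lambda,1-\lambda$ and the asymmetric coefficients $\mu^2,-\mu^{-2}$ into this symmetric template.

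First I would perform the change of variables making the barycenter correct: since $\lambda\mu^2-(1-\lambda)\mu^{-2}=0$ precisely because $\lambda=\frac1{1+\mu^4}$, the two points $A+\mu^2B$ and $A-\mu^{-2}B$ have weighted mean $A$, so $A$ plays the role of $f$ and the ``fluctuation'' is a rescaling of $B$. Writing $u=A+\mu^2B$, $v=A-\mu^{-2}B$, one has $A=\lambda u+(1-\lambda)v$ and $B=\frac{\lambda(1-\lambda)(\mu^2+\mu^{-2})}{\,?\,}(u-v)$ up to an explicit scalar; the point is that $B$ is a fixed multiple of $u-v$. Then I would invoke the weighted form of the BCL inequality — equivalently, apply the symmetric inequality after absorbing the weights — to get a bound of the shape $(\lambda\|u\|_p^p+(1-\lambda)\|v\|_p^p)^{2/p}\ge \|A\|_p^2+c(p,\lambda)\|u-v\|_p^2$ with an explicit $c(p,\lambda)$, and finally translate $\|u-v\|_p^2$ back into $\|B\|_p^2$, collecting all $\mu$-dependent constants.

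The delicate point, and the main obstacle, is getting the constant $C(\mu)$ with the stated dependence $\tfrac13\mu^{8-16/p}$ for $\tfrac34<p\le2$ (and $\tfrac13\mu^{-4}$ in the small range), rather than just \emph{some} constant. The weighted BCL inequality does not come with the optimal $(p-1)$ factor ``for free'' once the weights are unbalanced; one typically has to either (i) reduce to the balanced case by a further scaling of the two slots that distorts the $p$-norms by powers of $\lambda$, $1-\lambda$ (hence powers of $\mu^4$), and then track how $(p-1)$ and these powers combine, or (ii) prove the two-point inequality directly by a second-derivative/convexity computation in the spirit of \cite{CL93}, using the integral representation of the $S_p$-norm and Hanner-type estimates. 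I expect the cleanest route is (i): set $A'=\lambda^{1/p}(A+\mu^2B)$-type rescalings so the problem becomes symmetric, apply BCL, and then the factor $\mu^{8-16/p}$ emerges as $(\lambda(1-\lambda))^{1-2/p}$ times a bounded quantity, with the crude constant $\tfrac13$ coming from bounding $p-1$-independent remainders on $1<p\le2$. The case split at $p=\tfrac34$ — which as written should presumably read $p\le\tfrac{3}{2}$ or be governed by where $\mu^{8-16/p}$ crosses $\mu^{-4}$, i.e. $8-16/p=-4\iff p=\tfrac43$ — signals exactly this kind of optimization of two competing bounds, and I would handle it by taking the better of the two elementary estimates in each subinterval.
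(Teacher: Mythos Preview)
Your plan is genuinely different from the paper's, and route~(i) as you describe it has a real obstacle. After the rescaling $\tilde u=(2\lambda)^{1/p}(A+\mu^2B)$, $\tilde v=(2(1-\lambda))^{1/p}(A-\mu^{-2}B)$ that puts the two slots at equal weight, the Ball--Carlen--Lieb midpoint $f=\tfrac12(\tilde u+\tilde v)$ is \emph{not} a scalar multiple of $A$: it picks up a $B$-component because $(2\lambda)^{1/p}\neq(2(1-\lambda))^{1/p}$ whenever $\mu>1$. So BCL gives you $\|f\|_p^2+(p-1)\|g\|_p^2$ on the right, with $f,g$ both genuine mixtures of $A$ and $B$, and there is no triangle-type inequality that lets you bound this \emph{below} by $\|A\|_p^2+C(p-1)\|B\|_p^2$ with the correct constants. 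Your heuristic that ``$\mu^{8-16/p}$ emerges as $(\lambda(1-\lambda))^{1-2/p}$'' is also off: for large $\mu$ one has $\lambda(1-\lambda)\sim\mu^{-4}$, so $(\lambda(1-\lambda))^{1-2/p}\sim\mu^{8/p-4}$, not $\mu^{8-16/p}$.

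The paper bypasses this entirely by \emph{dualizing}. The inequality is recognized as the statement that a certain linear map $L^p\to\ell^2_2(L^p)$ is contractive; taking adjoints converts it into an \emph{upper} bound in $L^q$, $q=p'$, of the form
\[
\Big(\lambda\|X+Y\|_q^q+(1-\lambda)\Big\|X-\tfrac{\lambda}{1-\lambda}Y\Big\|_q^q\Big)^{2/q}\le \|X\|_q^2+\frac{q-1}{\mu^4 C}\|Y\|_q^2.
\]
On this smoothness side one can run a doubling argument: writing $\|Z\|_{2q}^{2q}=\|\,|Z|^2\,\|_q^q$ and using monotonicity of the $L^q$-norm on positives (which absorbs the asymmetric coefficient $\lambda/(1-\lambda)\le1$), one gets the recursion $C_{2q}\le 2C_q+1$ for the best constant. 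Starting from the exact value $C_2=\mu^{-4}$ and interpolating yields $C_q\le 3(q-1)$ in general, and the sharper bound on $2\le q<4$ (equivalently $\tfrac43<p\le2$) that produces the $\mu^{8-16/p}$ dependence. The duality step is what makes the asymmetric weights tractable; your direct approach on the convexity side does not have an analogous mechanism to kill the cross term.
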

\begin{proof}
The above inequality is nothing but the contractivity of
a fixed linear map from an $L^p$ space to a $L^p$-valued $\ell^2$ space.
By a careful examination of the adjoint map we can observe that
the above inequality is equivalent to the following.
Let $\frac{1}{p}+\frac{1}{q} = 1$. Then for any $X,Y\in \mathbb M_n$ we have
	\begin{align}\label{dual-ineq}
	\Big(\lambda \norm{X+Y}^q_q+(1-\lambda)\norm{X-\frac{\lambda}{1-\lambda}Y}^q_q\Big)^{\frac{2}{q}}
	& \le \norm{X}^2_q + \frac{q-1}{\mu^4C}\norm{Y}^2_q.
	\end{align}
Since we care less about the best constant we will use the following standard argument.
Let $C_q$ be the best constant such that \eqref{dual-ineq} is true
if we replace $\frac{q-1}{\mu^4C}$ by $C_q$.
Then we have
	\begin{align*}
	\lefteqn{\lambda \norm{X+Y}^{2q}_{2q}+(1-\lambda)\norm{X-\frac{\lambda}{1-\lambda}Y}^{2q}_{2q}}\\
	& = \lambda \norm{\abs{X}^2+\abs{Y}^2+X^*Y+Y^*X}^q_q\\
	&\;\;\;\; + (1-\lambda) \norm{\abs{X}^2 + \Big(\frac{\lambda}{1-\lambda}\Big)^2\abs{Y}^2
	-\frac{\lambda}{1-\lambda}(X^*Y+Y^*X)}^q_q\\
	& \le \lambda \norm{\abs{X}^2+\abs{Y}^2+X^*Y+Y^*X}^q_q\\
	&\;\;\;\; + (1-\lambda) \norm{\abs{X}^2 + \abs{Y}^2-\frac{\lambda}{1-\lambda}(X^*Y+Y^*X)}^q_q\\
	& \le \Big(\norm{\abs{X}^2+\abs{Y}^2}^2_q + C_q \norm{X^*Y+Y^*X}^2_q \Big)^{\frac{q}{2}}\\
	& \le \Big([\norm{X}^2_{2q} + \norm{Y}^2_{2q}]^2
	+ 4C_q \norm{X}^2_{2q}\norm{Y}^2_{2q} \Big)^{\frac{q}{2}}\\
	& \le \Big(\norm{X}^2_{2q} + (2C_q+1)\norm{Y}^2_{2q} \Big)^{\frac{2q}{2}}.
	\end{align*}
The first inequality is by monotony of the $L^p$ norm on positive
elements as $\frac{\lambda}{1-\lambda}\le 1$.

Thus, we can conclude that $C_{2q}\le 2C_q+1$.
Since we have $C_2 = \mu^{-4}\leq 1$, a standard interpolation argument leads us to
$C_q \le (q-1)^{1-\theta}(2q-1)^\theta$, where $2^n\le q<2^{n+1}$ and
$\frac{1}{q} = \frac{1-\theta}{2^n} + \frac{\theta}{2^{n+1}}$.
Thus, we simply get
	$$C_q \le 3(q-1),$$
which implies that the original inequality is true for
	$$C = \frac{1}{3}\mu^{-4}.$$
When $n=1$, i.e. $2\le q<4$ we can get a sharper estimate.
Since $C_2 = \mu^{-4}$	and $C_4 \le 2\mu^{-4}+1$,
for $\frac{1}{q} = \frac{1-\theta}{2} + \frac{\theta}{4}$ we have
	$$C_q \le (\mu^{-4})^{1-\theta}(2\mu^{-4}+1)^\theta \le 3\mu^{4-\frac{16}{q}},$$
which implies that the original inequality is true for
	$$C = \frac{1}{3}\mu^{8-\frac{16}{p}}.$$
\end{proof}
	
\begin{lem}\label{lem-a+yn}
For any $a,\, b\in \Gamma_{\la 1,\cdots,n-1 \ra}$, with $\lambda=\frac 1{\mu_n^4+1}$, we have 
\begin{align}\label{eq-estimate-I}
	\norm{(a+y_nd)D_{\la1,\cdots,n\ra}^{\frac{1}{p}}}^2_p
	& = \Big(\lambda \norm{(a+\mu^2_n d)D_{\la1,\cdots,n-1\ra}^{\frac{1}{p}}}^p_p \\ \nonumber & \qquad \qquad
	+ (1-\lambda)\norm{(a-\mu^{-2}_n d)D_{\la1,\cdots,n-1\ra}^{\frac{1}{p}}}^p_p\Big)^{\frac{2}{p}}\\
	& \ge \norm{aD_{\la1,\cdots,n-1\ra}^{\frac{1}{p}}}^2_p
	+ C(\mu)(p-1)\norm{dD_{\la1,\cdots,n-1\ra}^{\frac{1}{p}}}^2_p, \nonumber
	\end{align}
	where $C(\mu)$ is the constant in Lemma \ref{lem-modified-CL}.
\end{lem}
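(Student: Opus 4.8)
The plan is to transfer the left-hand side of \eqref{eq-estimate-I} to $\mathbb M_2\otimes\Gamma_{\la 1,\cdots,n-1\ra}$ via the $*$-isomorphism $\Phi$ of Proposition \ref{form}, where the relevant element becomes block diagonal, and then to feed the result into the matrix inequality of Lemma \ref{lem-modified-CL}.

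First I would note that $\Phi$ is state preserving: $\tau^\eps_n=(\psi\otimes\tau^\eps_{n-1})\circ\Phi$ by Proposition \ref{form}(3), and a state-preserving $*$-isomorphism automatically intertwines the modular groups, so by the last assertion of Proposition \ref{prop-Lp-ext} it extends to an isometric isomorphism $\Phi^p:L^p(\Gamma_{\la 1,\cdots,n\ra})\to L^p(\mathbb M_2\otimes\Gamma_{\la 1,\cdots,n-1\ra},\psi\otimes\tau^\eps_{n-1})$ sending $xD_{\la 1,\cdots,n\ra}^{1/p}$ to $\Phi(x)D_{\psi\otimes\tau^\eps_{n-1}}^{1/p}$. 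By Proposition \ref{form}(1)--(2) (using $\Phi(\gamma_n^*\gamma_n)=\Phi(\gamma_n)^*\Phi(\gamma_n)=(\mu_n^2+\mu_n^{-2})e_{11}$, hence $\Phi(y_n)=\mathrm{diag}(\mu_n^2,-\mu_n^{-2})$) one has $\Phi(a+y_nd)=\mathrm{diag}(a+\mu_n^2d,\,a-\mu_n^{-2}d)$, while the density of a tensor state factorizes, $D_{\psi\otimes\tau^\eps_{n-1}}^{1/p}=\mathrm{diag}(\lambda^{1/p},(1-\lambda)^{1/p})\otimes D_{\la 1,\cdots,n-1\ra}^{1/p}$. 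Multiplying, $\Phi^p\big((a+y_nd)D_{\la 1,\cdots,n\ra}^{1/p}\big)=\mathrm{diag}\big(\lambda^{1/p}(a+\mu_n^2d)D_{\la 1,\cdots,n-1\ra}^{1/p},\,(1-\lambda)^{1/p}(a-\mu_n^{-2}d)D_{\la 1,\cdots,n-1\ra}^{1/p}\big)$. Since the $L^p$-norm of a block-diagonal element $\mathrm{diag}(\eta_1,\eta_2)$ of $L^p(\mathbb M_2\otimes\Gamma_{\la 1,\cdots,n-1\ra})$ is $\big(\norm{\eta_1}_p^p+\norm{\eta_2}_p^p\big)^{1/p}$, and $\Phi^p$ is isometric, taking the $p$-th power and then the power $2/p$ gives the first line of \eqref{eq-estimate-I}.

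For the inequality I would invoke the preceding remark: $\Gamma_{\la 1,\cdots,n-1\ra}\cong\mathbb M_{2^{n-1}}$, so once a faithful trace is fixed, $L^p(\Gamma_{\la 1,\cdots,n-1\ra})$ is isometrically a Schatten class $S_p^{2^{n-1}}$. Writing $A:=aD_{\la 1,\cdots,n-1\ra}^{1/p}$ and $B:=dD_{\la 1,\cdots,n-1\ra}^{1/p}$ as matrices, linearity gives $(a+\mu_n^2d)D_{\la 1,\cdots,n-1\ra}^{1/p}=A+\mu_n^2B$ and $(a-\mu_n^{-2}d)D_{\la 1,\cdots,n-1\ra}^{1/p}=A-\mu_n^{-2}B$, and since $\mu_n\ge1$ and $\lambda=1/(1+\mu_n^4)$, Lemma \ref{lem-modified-CL} applied with $\mu=\mu_n$ gives exactly the second line of \eqref{eq-estimate-I}, with $C(\mu)$ its constant. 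The proof is thus mostly bookkeeping; the point to handle with care is the transfer along $\Phi$ --- correctly placing the weights $\lambda,1-\lambda$ coming from $\psi$ and checking that on block-diagonal elements the Haagerup $L^p$-norm is the $\ell^p$-combination of the norms of the corners --- together with the (standard) identification of the non-tracial finite-dimensional $L^p$-space with a Schatten class, which is what allows Lemma \ref{lem-modified-CL}, stated for $\mathbb M_n$, to be applied here.
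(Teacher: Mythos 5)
Your proposal is correct and follows the same route as the paper, which proves the lemma precisely as a direct application of Proposition \ref{form} together with Lemma \ref{lem-modified-CL}, via the observation that $\Phi(a+y_nd)=\mathrm{diag}(a+\mu_n^2d,\,a-\mu_n^{-2}d)$. You have merely filled in the bookkeeping (the isometric $L^p$-transfer along the state-preserving isomorphism, the factorization of the density, and the Schatten-class identification) that the paper leaves implicit.
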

\begin{proof}
It is a direct application of Proposition \ref{form} and Lemma 
\ref{lem-modified-CL}, if one notices that 
for any $a, d \in \Gamma_{\la 1,...,n-1\ra}$,
	$\Phi(a + y_n d) = \left[\begin{array}{cc} a + \mu^2_n d & 0\\ 0& a - \mu^{-2}_n d \end{array}\right].$
\end{proof}

	\begin{lem}\label{lem-gamma-estimate}
	Let $b,c\in \Gamma_{\la 1,\cdots,n-1\ra}$. Then, we have
		$$\norm{\gamma_n b D^{\frac{1}{p}}_{\la 1,\cdots,n \ra}}_p
		\ge \lambda^{\frac{1}{p}} (\mu^2_n + \mu^{-2}_n)^{\frac{1}{2}}
		\norm{b D^{\frac{1}{p}}_{\la 1,\cdots,n-1 \ra}}_p$$
	and
		$$\norm{\gamma^*_n c D^{\frac{1}{p}}_{\la 1,\cdots,n \ra}}_p
		\ge (1-\lambda)^{\frac{1}{p}} (\mu^2_n + \mu^{-2}_n)^{\frac{1}{2}}
		\norm{c D^{\frac{1}{p}}_{\la 1,\cdots,n-1 \ra}}_p.$$
	\end{lem}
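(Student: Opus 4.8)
The plan is to use the $*$-isomorphism $\Phi$ of Proposition \ref{form} to transfer the computation into $\mathbb M_2 \otimes \Gamma_{\la 1,\dots,n-1\ra}$, where the density $D_{\la 1,\dots,n\ra}$ becomes diagonal and everything reduces to a two-by-two block computation combined with the isometric embedding of Proposition \ref{prop-gamma}(2). First I would recall that under $\Phi$ we have $\Phi(\gamma_n b) = (\mu_n^2+\mu_n^{-2})^{1/2}\left[\begin{array}{cc} 0 & 0\\ u_n b & 0\end{array}\right]$ by Proposition \ref{form}(1)--(2), while the density transfers to $\Phi(D_{\la 1,\dots,n\ra}) = \left[\begin{array}{cc} \lambda D_{\la 1,\dots,n-1\ra} & 0 \\ 0 & (1-\lambda) D_{\la 1,\dots,n-1\ra}\end{array}\right]$ — this last identity coming from $\tau_n^\eps = (\psi \otimes \tau_{n-1}^\eps)\circ\Phi$ in Proposition \ref{form}(3) together with the fact that $\psi$ is the state with weights $\lambda, 1-\lambda$. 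Consequently $\Phi(D_{\la 1,\dots,n\ra}^{1/p}) = \left[\begin{array}{cc} \lambda^{1/p} D_{\la 1,\dots,n-1\ra}^{1/p} & 0 \\ 0 & (1-\lambda)^{1/p} D_{\la 1,\dots,n-1\ra}^{1/p}\end{array}\right]$.

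Next I would simply multiply: $\Phi(\gamma_n b\, D_{\la 1,\dots,n\ra}^{1/p})$ equals $(\mu_n^2+\mu_n^{-2})^{1/2}\lambda^{1/p}$ times the matrix with $u_n b\, D_{\la 1,\dots,n-1\ra}^{1/p}$ in the $(2,1)$ entry and zeros elsewhere. Since $\Phi$ induces an isometry on the $L^p$ level (Proposition \ref{prop-Lp-ext}, as $\Phi$ is state-preserving by Proposition \ref{form}(3)) and since the $L^p$ norm of a matrix with a single nonzero entry $z$ is just $\|z\|_p$ computed in the corner algebra, we get
\begin{align*}
\norm{\gamma_n b\, D_{\la 1,\dots,n\ra}^{1/p}}_p
&= (\mu_n^2+\mu_n^{-2})^{1/2}\lambda^{1/p}\,\norm{u_n b\, D_{\la 1,\dots,n-1\ra}^{1/p}}_p \\
&= (\mu_n^2+\mu_n^{-2})^{1/2}\lambda^{1/p}\,\norm{b\, D_{\la 1,\dots,n-1\ra}^{1/p}}_p,
\end{align*}
the last step because $u_n$ is a unitary in $\Gamma_{\la 1,\dots,n-1\ra}$ and left multiplication by a unitary is an $L^p$-isometry. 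This in fact gives equality, hence in particular the claimed inequality; the $\gamma_n^* c$ case is identical except that $\Phi(\gamma_n^*) = (\mu_n^2+\mu_n^{-2})^{1/2}\left[\begin{array}{cc} 0 & u_n^*\\ 0 & 0\end{array}\right]$ puts the nonzero entry in the $(1,2)$ slot, which gets weighted by $(1-\lambda)^{1/p}$ instead of $\lambda^{1/p}$.

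The only genuinely delicate point is the normalization of the density under $\Phi$, i.e. checking $\Phi(D_{\la 1,\dots,n\ra}) = \mathrm{diag}(\lambda D_{\la 1,\dots,n-1\ra}, (1-\lambda) D_{\la 1,\dots,n-1\ra})$; one must be careful that $D_{\la 1,\dots,n-1\ra}$ sits inside $\Gamma_{\la 1,\dots,n\ra}$ via the state-preserving inclusion of Proposition \ref{prop-gamma}(2), not via some other embedding, so that the formula $\tau_n^\eps = (\psi\otimes\tau_{n-1}^\eps)\circ\Phi$ literally pins down the density as the unique positive element implementing the state. Once that is in place the rest is bookkeeping. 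I would also remark that these are genuinely equalities, which is why they will be sharp enough to feed into the proof of Theorem \ref{thm-eps}: the off-diagonal terms $\gamma_n b$ and $\gamma_n^* c$ in the decomposition \eqref{unique-rep} contribute their full $L^p$ mass with no loss, and only the diagonal part $a + y_n d$ requires the genuine convexity input of Lemma \ref{lem-a+yn}.
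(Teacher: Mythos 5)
Your proof is correct, but it takes a genuinely different route from the paper's. The paper never identifies the image of the density under $\Phi$: it instead writes $\norm{\gamma_n b D^{\frac1p}_{\la1,\cdots,n\ra}}_p \ge \norm{\gamma_n^*\gamma_n b D^{\frac1p}_{\la1,\cdots,n\ra}}_p/\norm{\gamma_n^*}_\infty$, observes $\gamma_n^*\gamma_n b=\mu_n^{-2}b+y_nb$, and then feeds this into the already-established diagonal identity of Lemma \ref{lem-a+yn} with $a=\mu_n^{-2}b$, $d=b$ (so the $(1-\lambda)$-term vanishes and the $\lambda$-term contributes $(\mu_n^2+\mu_n^{-2})b$), finishing with $\norm{\gamma_n^*}_\infty=(\mu_n^2+\mu_n^{-2})^{1/2}$. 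Your direct computation of $\Phi(\gamma_n b\,D^{1/p}_{\la1,\cdots,n\ra})$ as a single off-diagonal corner is a valid alternative and even yields the stronger conclusion that both estimates are equalities (which the paper's chain of inequalities in fact also achieves, without saying so); the price is exactly the point you flag, namely justifying $\Phi(D_{\la1,\cdots,n\ra})=\mathrm{diag}\big(\lambda D_{\la1,\cdots,n-1\ra},(1-\lambda)D_{\la1,\cdots,n-1\ra}\big)$, which requires choosing the reference traces compatibly, i.e.\ $\mathrm{tr}_n\circ\Phi^{-1}=\mathrm{Tr}\otimes\mathrm{tr}_{n-1}$ on $\mathbb M_2\otimes\Gamma_{\la1,\cdots,n-1\ra}$, so that a matrix with single entry $z$ has $L^p$-norm $\norm{z}_p$. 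The paper's $\gamma_n^*\gamma_n$ trick sidesteps this bookkeeping entirely by reducing the off-diagonal term to the diagonal case, where the density's behavior is already packaged in Lemma \ref{lem-a+yn}; your version makes the geometric content (the corner weights $\lambda^{1/p}$ and $(1-\lambda)^{1/p}$) more transparent.
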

\begin{proof}
By \eqref{eq-estimate-I} we get
	\begin{align*}
	\norm{\gamma_n b D^{\frac{1}{p}}_{\la 1,\cdots,n \ra}}_p
	& \ge \frac{1}{\norm{\gamma^*_n}_\infty}
	\norm{\gamma^*_n\gamma_n b D^{\frac{1}{p}}_{\la 1,\cdots,n \ra}}_p\\
	& = \frac{1}{\norm{\gamma^*_n}_\infty}
	\norm{ (\mu^{-2}_nb + y_nb )D^{\frac{1}{p}}_{\la 1,\cdots,n \ra}}_p\\
	& \geq \frac{1}{\norm{\gamma^*_n}_\infty}
	\lambda^{\frac{1}{p}} (\mu^2_n + \mu^{-2}_n) \norm{b D^{\frac{1}{p}}_{\la 1,\cdots,n-1 \ra}}_p\\
	& = \lambda^{\frac{1}{p}} (\mu^2_n + \mu^{-2}_n)^{\frac{1}{2}}
	\norm{b D^{\frac{1}{p}}_{\la 1,\cdots,n-1 \ra}}_p.
	\end{align*}
Note that the equality in the last line holds by the fact
	$$\norm{\gamma_n}_\infty = \norm{\gamma^*_n}_\infty = \sqrt{\mu^2_n + \mu^{-2}_n},$$
which is a direct application of Proposition \ref{form}.

The estimate for $\norm{\gamma^*_n c D^{\frac{1}{p}}_{\la 1,\cdots,n
\ra}}_p$ is similar.
\end{proof}

\begin{proof}[Proof of Theorem \ref{thm-eps}]
We follow the idea of Carlen/Lieb and Biane to use the induction on
$n$, where $I = \{\pm 1, \cdots, \pm n\}$.  We assume that we have the
conclusion for $n-1$ and consider the case $n$.  Every element in
$\Gamma_{\la 1,\cdots,n \ra}$ can be uniquely expressed as
	$$X = a+\gamma_nb+\gamma^*_nc+y_nd,$$
where $a,b,c,d \in \Gamma_{\la 1,\cdots,n-1 \ra}$.
Note that
$\{D_{\la n\ra}^{\frac{1}{2}},\, \gamma_n D_{\la n\ra}^{\frac{1}{2}},\, \gamma^*_n D_{\la n\ra}^{\frac{1}{2}},\, y_n D_{\la n\ra}^{\frac{1}{2}}\}$
is an orthogonal set in $L^2(\Gamma_{\la n \ra})$ with
	$$\norm{D_{\la n\ra}^{\frac{1}{2}}}_2=\norm{y_n D_{\la n\ra}^{\frac{1}{2}}}_2=1,\;
	\norm{\gamma_n D_{\la n\ra}^{\frac{1}{2}}}_2 = \mu^{-1}_n\;\; \text{and}\;\;
	\norm{\gamma^*_n D_{\la n\ra}^{\frac{1}{2}}}_2 = \mu_n.$$
For example, $\norm{y_n D_{\la n\ra}^{\frac{1}{2}}}^2_2
= \text{tr}_{\Gamma_{\la n \ra}}(D_{\la n\ra}^{\frac{1}{2}} y^2_n D_{\la n\ra}^{\frac{1}{2}})
= \tau^\eps(y^2_n) = \la y_n 1, y_n 1 \ra = 1$.
Moreover, $y_n1 = x_{-n}x_n$ so that $P^\eps_t(y_n) = e^{-2t}y_n$.
Thus, by applying (3) of Proposition \ref{prop-gamma}, we get that the four terms in $X$ are orthogonal and 
	\begin{align}\label{eq-2-norm}
	\lefteqn{\norm{P^\eps_t(X)D_{\la1,\cdots,n\ra}^{\frac{1}{2}}}^2_2}\\
	&= \norm{P^\eps_t(a)D_{\la1,\cdots,n-1\ra}^{\frac{1}{2}}}^2_2
	+\mu^{-2}_n e^{-2t}\norm{P^\eps_t(b)D_{\la1,\cdots,n-1\ra}^{\frac{1}{2}}}^2_2\nonumber \\
	&\;\;\;\;+\mu^2_n e^{-2t}\norm{P^\eps_t(c)D_{\la1,\cdots,n-1\ra}^{\frac{1}{2}}}^2_2
	+e^{-4t}\norm{P^\eps_t(d)D_{\la1,\cdots,n-1\ra}^{\frac{1}{2}}}^2_2.\nonumber
	\end{align}
Now we estimate $\norm{X}_p$.
Since the map replacing $\gamma_n$ into $-\gamma_n$ is a $\tau^\eps$-preserving $*$-isomorphism of $\Gamma_{\la1,\cdots,n\ra}$, Proposition \ref{prop-Lp-ext} implies that
	$$\norm{(a+\gamma_nb+\gamma^*_nc+y_nd)D_{\la1,\cdots,n\ra}^{\frac{1}{p}}}_p
	= \norm{(a-\gamma_nb-\gamma^*_nb+y_nd)D_{\la1,\cdots,n\ra}^{\frac{1}{p}}}_p.$$
By the optimal convexity inequality (\cite{BCL} or \cite{CL93}) we have
	\begin{align}\label{eq-p-norm}
	\norm{XD_{\la1,\cdots,n\ra}^{\frac{1}{p}}}^2_p
	& \ge \norm{(a+y_nd)D_{\la1,\cdots,n\ra}^{\frac{1}{p}}}^2_p
	+ (p-1)\norm{(\gamma_nb+\gamma^*_nc)D_{\la1,\cdots,n\ra}^{\frac{1}{p}}}^2_p\\
	& = I + (p-1)II. \nonumber
	\end{align}

The estimate for $I$ is Lemma \ref{lem-a+yn}. For $II$, note that $\gamma_n bD_{\la1,\cdots,n\ra}^{\frac{1}{p}}$ and
$\gamma^*_n cD_{\la1,\cdots,n\ra}^{\frac{1}{p}}$ have disjoint support.
Indeed, we have
	$$(\gamma_n bD_{\la1,\cdots,n\ra}^{\frac{2}{p}} b^*\gamma^*_n)
	(\gamma^*_n c D_{\la1,\cdots,n\ra}^{\frac{2}{p}} c^*\gamma_n)=0$$
and
	\begin{align*}
	\lefteqn{(D_{\la1,\cdots,n\ra}^{\frac{1}{p}}b^*\gamma^*_n \gamma_n bD_{\la1,\cdots,n\ra}^{\frac{1}{p}})
	(D_{\la1,\cdots,n\ra}^{\frac{1}{p}}c^*\gamma_n \gamma^*_n cD_{\la1,\cdots,n\ra}^{\frac{1}{p}})}\\
	& = D_{\la1,\cdots,n\ra}^{\frac{1}{p}}b^*bD_{\la1,\cdots,n\ra}^{\frac{1}{p}}
	\gamma^*_n\gamma_n \gamma_n\gamma^*_n D_{\la1,\cdots,n\ra}^{\frac{1}{p}}c^*c D_{\la1,\cdots,n\ra}^{\frac{1}{p}}\\
	& = 0
	\end{align*}
by (3) of Proposition \ref{prop-gamma}.
Thus, by orthogonality and Lemma \ref{lem-gamma-estimate} we have
	\begin{align}\label{eq-estimate-II}
	II
	& = \Big( \norm{\gamma_n b D_{\la1,\cdots,n\ra}^{\frac{1}{p}}}^p_p
	+ \norm{\gamma^*_n c D_{\la1,\cdots,n\ra}^{\frac{1}{p}}}^p_p \Big)^{\frac{2}{p}}\\
	& \ge \norm{\gamma_n b D_{\la1,\cdots,n\ra}^{\frac{1}{p}}}^2_p + \norm{\gamma^*_n c D_{\la1,\cdots,n\ra}^{\frac{1}{p}}}^2_p \nonumber \\
	& \ge \lambda^{\frac{2}{p}} (\mu^2_n + \mu^{-2}_n)
	\norm{b D^{\frac{1}{p}}_{\la 1,\cdots,n-1 \ra}}^2_p
	+ (1-\lambda)^{\frac{2}{p}} (\mu^2_n + \mu^{-2}_n)
	\norm{c D^{\frac{1}{p}}_{\la 1,\cdots,n-1 \ra}}^2_p. \nonumber
	\end{align}
By combining \eqref{eq-2-norm}, \eqref{eq-p-norm}, \eqref{eq-estimate-I} and \eqref{eq-estimate-II} we get
	$$\norm{P^\eps_t(X)D_{\la1,\cdots,n\ra}^{\frac{1}{2}}}^2_2
	\le \norm{XD_{\la1,\cdots,n\ra}^{\frac{1}{p}}}^2_p$$
provided that
	$$e^{-2t} \le \min \{ (\mu^4_n+1)^{1-\frac{2}{p}}(p-1),\; \sqrt{C(\mu_n)}\sqrt{p-1}\},$$
where $C(\mu_n)$ is the constant in Lemma \ref{lem-modified-CL} for $\mu = \mu_n$.
\end{proof}

\section{Approximation by central limit procedure}\label{sec-CentralLimit}

The aim of this section is to use a standard approximation procedure
to go from the baby Fock model to the $q$-Araki-Woods algebras.
Most of the arguments are easy adaptations of \cite{N06}, so we will simply
sketch them. 

In section \ref{sec-babyFock} we constructed generalized
baby gaussians $\gamma_i$ associated with the parameters $\mu_i$ for
$1\le i\le n$ by starting with the index set $I = \{\pm 1, \pm 2,
\cdots, \pm n\}$.  In this section we apply the same construction
using the increased index set
	$$\widetilde{I} = \{ (i,j) : 1\le i\le n,\; 1\le j \le m \} \cup \{ (-i,-j) : 1\le i\le n,\; 1\le j \le m \},$$
so that we can get generalized baby gaussians $\gamma_{i,j}$ associated with the parameter $\mu_i$ for $1\le i \le n$, $1\le j\le m$
and the von Neumann algebra $\Gamma_{n,m}$ generated by $\{ \gamma_{i,j} : 1\le i \le n,\; 1\le j\le m\}$.
Note that the ``choice of sign" function $\eps$ in this case would be
	$$\eps : \widetilde{I} \times \widetilde{I} \rightarrow \{\pm 1\}$$
satisfying
	$$\begin{array}{l}
	\eps((i_1,i_2), (j_1,j_2)) = \eps((j_1,j_2), (i_1,i_2)),\\
	\eps((i_1,i_2), (i_1,i_2))=-1,\\
	\eps((i_1,i_2), (j_1,j_2)) = \eps((\abs{i_1},\abs{i_2}), (\abs{j_1},\abs{j_2})),
	\; \forall (i_1,i_2), (j_1,j_2)\in \widetilde{I}. \end{array}$$
Now we replace $\eps((i_1,i_2), (j_1,j_2))$, $(i_1,i_2) \prec (j_1,j_2)\in \widetilde{I}$ with a family of i.i.d. random variables
with
	$$P(\eps((i_1,i_2), (j_1,j_2))= -1) = \frac{1-q}{2}, \;\; P(\eps((i_1,i_2), (j_1,j_2))= 1) = \frac{1+q}{2},$$
where $(i_1,i_2) \prec (j_1,j_2)$ means $i_1<j_1$ or $i_1=j_1, i_2<j_2$.
We set
	$$s_{i,m} = \frac{1}{\sqrt{m}}\sum^m_{j=1}\gamma_{i,j}.$$
Then, the Speicher's central limit procedure (\cite{Sp92, N06}) tells us the following.
	\begin{prop}\label{prop-n-var}
	For any $*$-polynomial $Q$ in $n$ non-commuting variables we have
		$$\lim_{m\rightarrow \infty}\tau^\eps(Q(s_{1,m}, \cdots, s_{n,m})) = \tau_q(Q(g_{q,1},\cdots, g_{q,n}))$$
	for almost every $\eps$.
	\end{prop}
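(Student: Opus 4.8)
The plan is to follow Speicher's central limit theorem in the twisted setting exactly as in \cite{Sp92, N06}, the only novelty being that the generalized gaussians $\gamma_{i,j}$ carry the extra parameter $\mu_i$. First I would reduce to the case where $Q$ is a single monomial $s_{k_1,m}\cdots s_{k_d,m}$ (by linearity, and noting $s_{i,m}^*=s_{i,m}$), and then expand
\begin{equation}\label{pf-expand}
\tau^\eps\big(s_{k_1,m}\cdots s_{k_d,m}\big)
= m^{-d/2}\sum_{j_1,\dots,j_d=1}^m \tau^\eps\big(\gamma_{k_1,j_1}\cdots\gamma_{k_d,j_d}\big).
\end{equation}
The key combinatorial input, coming from the commutation relations \eqref{rela} together with $\gamma_{i,j}^2=(\gamma_{i,j}^*)^2=0$ and $\tau^\eps(\gamma^*_{i,j}\gamma_{i,j}+\gamma_{i,j}\gamma^*_{i,j})=\mu_i^2+\mu_i^{-2}$, is that a term in \eqref{pf-expand} can be nonzero only when the multi-indices $(k_r,j_r)$ are matched in pairs; in particular $d$ must be even, say $d=2\ell$, and for each pairing the sign picked up is a product of $\eps$-values counting crossings, while the "diagonal" contribution of a matched pair $\{(k_r,j_r),(k_s,j_s)\}$ with $k_r=k_s$ contributes a factor depending on $\mu_{k_r}$ and on whether the pair is "creation-first" or "annihilation-first."

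**Counting the surviving terms.** The crucial point is the order of magnitude in $m$: for a fixed pair partition $\pi$ of $\{1,\dots,2\ell\}$ that respects the constraint $k_r=k_s$ on each block, the number of choices of $(j_1,\dots,j_{2\ell})$ making all matched $j$'s equal within blocks is $m^\ell$ up to lower order (one is free to choose one value of $j$ per block, and distinct blocks must take distinct values to avoid extra vanishing or, more precisely, the diagonal $j_r=j_s$ across different blocks contributes only $O(m^{\ell-1})$ terms). After dividing by $m^{d/2}=m^\ell$, only pair partitions survive in the limit, and each contributes the product over its blocks of the appropriate $\mu$-factor, times the expected sign. Here I would invoke the law of large numbers / second Borel--Cantelli exactly as in \cite{N06}: since the $\eps((i_1,i_2),(j_1,j_2))$ are i.i.d.\ with mean $q$, for almost every $\eps$ the average of the sign over the $m^\ell$ choices of $j$'s within a fixed crossing pattern converges to $q^{\mathrm{cr}(\pi)}$, where $\mathrm{cr}(\pi)$ is the number of crossings of $\pi$. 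This is where one must be a little careful: one needs almost sure convergence simultaneously for all monomials $Q$, which is fine since there are only countably many.

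**Identifying the limit.** Assembling the pieces, $\lim_{m\to\infty}\tau^\eps(s_{k_1,m}\cdots s_{k_d,m})$ equals a sum over pair partitions $\pi$ of $q^{\mathrm{cr}(\pi)}$ times a product of $\mu$-weights over the blocks of $\pi$, and this is precisely the Wick-type formula computing $\tau_q(g_{q,k_1}\cdots g_{q,k_d})$ for the $q$-generalized gaussians $g_{q,i}=\mu_i^{-1}\ell_q(e_i)+\mu_i\ell^*_q(e_{-i})$ on the $q$-Fock space: expanding $\tau_q(g_{q,k_1}\cdots g_{q,k_d})=\langle g_{q,k_1}\cdots g_{q,k_d}\Omega,\Omega\rangle_q$ by the creation/annihilation bookkeeping gives the same sum over pairings with the same $q^{\mathrm{cr}(\pi)}$ and the same $\mu$-factors (a matched pair $\{r,s\}$ with $k_r=k_s$ forces an annihilation at position $\min(r,s)$ paired with a creation at position $\max(r,s)$, yielding $\mu_{k_r}^{-1}\cdot\mu_{k_r}$ or the appropriate power according to the $\ell$ versus $\ell^*$ assignment). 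So the two limits coincide on monomials, hence on all $*$-polynomials $Q$. I expect the main obstacle to be the bookkeeping in matching the $\mu$-weights on each block between the baby Fock side and the $q$-Fock side — it is routine but must be done carefully to see that the asymmetry introduced by $\mu_i\neq1$ is transported faithfully through the limit; the probabilistic part and the order-of-magnitude count are standard once \cite{N06, Sp92} are cited.
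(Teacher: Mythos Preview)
Your approach is exactly the one the paper invokes: the paper does not prove this proposition directly but simply cites Speicher's central limit theorem \cite{Sp92} and Nou's adaptation \cite{N06}, and your sketch is a faithful outline of that argument.

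There is, however, one genuine slip. You write ``noting $s_{i,m}^*=s_{i,m}$'', but this is false: the baby gaussians $\gamma_{i,j}=\mu_i^{-1}\beta_{i,j}^*+\mu_i\beta_{-i,-j}$ are \emph{not} self-adjoint (they are generalized circular, not semicircular, elements), and hence neither are the $s_{i,m}$. A $*$-monomial is therefore a word in both the $s_{i,m}$ and the $s_{i,m}^*$, and in the pairing combinatorics a nonzero contribution requires each block of $\pi$ to match one starred factor with one unstarred factor carrying the same index $k$. This changes the bookkeeping in your last paragraph: a block $\{r<s\}$ contributes $\mu_{k}^{-2}$ or $\mu_k^{2}$ depending on whether the $\gamma^*$ sits to the left or to the right, matching $\tau^\eps(\gamma_k^*\gamma_k)=\mu_k^{-2}$ and $\tau^\eps(\gamma_k\gamma_k^*)=\mu_k^{2}$ from Proposition~\ref{prop-gamma} and relations~\eqref{rela}. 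Once this correction is made the argument goes through unchanged, and the limiting Wick formula is precisely the one for the $q$-generalized circulars $g_{q,i}$ on the $q$-Fock side.
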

Since the set of all non-commuting $*$-polynomials is countable, we can find a choice of sign $\eps$ such that the above is true for any $Q$.
In the sequel we fix such an $\eps$.

Now we would like to transfer this convergence in distribution into $L^p$-norm convergence
using Nou's ultraproduct approach (\cite[Theorem 4.3, Section 5.2]{N06}.
If we set $g_{i,m} = \text{Re}(s_{i,m})$, $g_{-i,m} = \text{Im}(s_{i,m})$ and
$G_i = \text{Re}(g_{q,i})$, $G_{-i} = \text{Im}(g_{q,i})$, $1\le i \le n$,
then by Proposition \ref{prop-n-var} for any polynomial $P$ in $2n$ non-commuting variables we have
	\begin{equation}
	\lim_{m\rightarrow \infty}\tau^\eps(P(g_{-n,m}, \cdots, g_{n,m})) = \tau_q(P(G_{-n},\cdots, G_n)).
	\end{equation}
We need to truncate $g_{j,m}$ to get a uniform control on the operator
norms.  Let $C>0$ be a constant satisfying $\norm{G_j}_{\Gamma_q} < C$
for any $\abs{j} \le n$.  We consider the function $h$ on $\Real$ with
$h(x) = 1_{(-C,C)}(x)x,\; x\in \Real$ and set $\tilde{g}_{i,m} =
h(g_{i,m}),\; 1\le i \le n.$ From \cite[Lemma 5.7]{N06} and the
discussion after it, we have
	\begin{prop}\label{prop-state-preserving-map}
	Let $\U$ be a fixed free ultrafilter on $\n$, $(\A, \tau) = \Pi_{m,\U}(\Gamma_{n,m}, \tau^\eps)$, and $p \in \A$ be the support of $\tau$.
	Then we have the following normal state-preserving $*$-isomorphism.
		$$\Theta : (\Gamma_q, \tau_q) \rightarrow (\A, \tau),\;\;
		P(G_{-n},\cdots, G_n) \mapsto p\cdot( P(\tilde{g}_{-n,m}, \cdots, \tilde{g}_{n,m}) )_{m,\U}\cdot p,$$
	where $P$ is any polynomial in $2n$ non-commuting variables.
	\end{prop}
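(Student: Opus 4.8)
The plan is to follow closely Nou's ultraproduct identification (\cite[Theorem 4.3 and Section 5.2]{N06}); since the model $\Gamma_q^\mu$ there is already non-tracial, no new phenomenon occurs and one only has to assemble two ingredients. The first is the $*$-distributional convergence of Proposition \ref{prop-n-var}, read off for the self-adjoint parts $g_{j,m}$ and $G_j$. The second is a uniform norm and moment control of the truncations: since $\norm{G_j}_{\Gamma_q} < C$ we have $h(G_j) = G_j$; applying Proposition \ref{prop-n-var} to the even moments gives $\sup_m \tau^\eps\big(g_{j,m}^{2k}\big) < \infty$ for every $k \geq 1$ and $\tau^\eps\big(g_{j,m}^{2k}\big) \to \tau_q\big(G_j^{2k}\big) \leq \norm{G_j}_{\Gamma_q}^{2k} < C^{2k}$; by Chebyshev and this strict bound the spectral mass of $g_{j,m}$ outside $(-C, C)$ tends to $0$, whence $\tau^\eps\big((g_{j,m} - \widetilde g_{j,m})^{2k}\big) \to 0$ for every $k$, while $\norm{\widetilde g_{j,m}}_\infty \leq C$ uniformly. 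This is \cite[Lemma 5.7]{N06} and the remark after it. In particular each bounded net $(\widetilde g_{j,m})_m$ represents an element $\widetilde g_j$ of $(\A, \tau) = \Pi_{m,\U}(\Gamma_{n,m}, \tau^\eps)$.

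The core step is then the moment-matching identity: for every noncommutative polynomial $P$ in the $2n$ self-adjoint variables,
$$\lim_\U \tau^\eps\big(P(\widetilde g_{-n,m}, \dots, \widetilde g_{n,m})\big) = \tau_q\big(P(G_{-n}, \dots, G_n)\big).$$
By linearity it suffices to treat a single word; telescoping one letter at a time from $\widetilde g_{j,m}$ to $g_{j,m}$, each replacement contributes a term which, by Cauchy--Schwarz for the GNS inner product of $\tau^\eps$, is bounded by a uniformly bounded $L^2(\tau^\eps)$-factor (a word in the $g_{j,m}$'s, controlled by Proposition \ref{prop-n-var}) times a factor tending to $0$ (since $\tau^\eps\big((\widetilde g_{j,m} - g_{j,m})^2\big) \to 0$ and the remaining letters are $\widetilde g$'s, hence $\norm{\cdot}_\infty$-bounded), so it tends to $0$; and the fully untruncated word converges to the claimed limit by Proposition \ref{prop-n-var}. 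Combined with $\tau(p \, \cdot \, p) = \tau$ and the self-adjointness of $\widetilde g_{j,m}$ (because $h$ is real-valued), this says precisely that the candidate map $\Theta$ of the statement, defined on the $*$-algebra of polynomials in the $G_j$'s, preserves all $*$-moments relative to $\tau_q$ and $\tau$; applying this to $(P-Q)^*(P-Q)$ and using the faithfulness of $\tau_q$ shows $\Theta$ is well defined and $L^2$-isometric.

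Finally, the uniform norm bounds of the first paragraph together with this moment-matching identity are exactly the hypotheses under which the machinery of \cite[Theorem 4.3 and Section 5.2]{N06} produces, out of a sequence converging in $*$-distribution with uniformly bounded operator norms, a normal state-preserving $*$-homomorphism into the compressed ultraproduct $p\A p$; instantiating it here yields the stated $\Theta$, which is injective because it is state-preserving and multiplicative with $\tau_q$ faithful. The one genuinely technical point is the truncation control of the first paragraph --- keeping the $\widetilde g_{j,m}$'s $L^2(\tau^\eps)$-close to the $g_{j,m}$'s while their operator norms stay bounded, which is where the strict inequality $\norm{G_j}_{\Gamma_q} < C$ enters --- and this is precisely \cite[Lemma 5.7]{N06}; the remaining steps are the standard bookkeeping of the von Neumann algebraic ultraproduct, carried out in full (non-tracial) generality in \cite{N06}, so in the paper we would only sketch them.
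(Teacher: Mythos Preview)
Your proposal is correct and takes essentially the same approach as the paper: both defer entirely to Nou \cite[Lemma 5.7, Theorem 4.3, Section 5.2]{N06}, the paper by a one-line citation and you by unpacking the truncation/moment-matching argument and then invoking Nou's ultraproduct machinery. Your expanded sketch (Chebyshev control of the spectral tails, telescoping replacement of $\tilde g$ by $g$, moment identity giving well-definedness and $L^2$-isometry, and finally the abstract ultraproduct construction) is exactly the content of those references, so there is no genuine divergence.
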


Then by Proposition \ref{prop-state-preserving-map}, Proposition \ref{prop-Lp-ext} and \cite[Theorem 3.6]{Ray}
for any polynomial $P$ in $2n$ non-commuting variables we have
	$$\lim_{m,\U}\norm{P(\tilde{g}_{-n,m}, \cdots, \tilde{g}_{n,m})D_m^{\frac{1}{p}}}_p = \norm{P(G_{-n},\cdots, G_n)D_q^{\frac{1}{p}}}_p,$$
where $D_m$ is the density of $\tau^\eps$ restricted to $\Gamma_{n,m}$.
Now we need to replace $\tilde{g}_{i,m}$ back with $g_{i,m}$.

	\begin{lem}\label{lem-Lp-conv}
	Let $\U$ be a fixed free ultrafilter on $\n$ and $1\le p\le 2$. For any polynomial $P$ in $2n$ non-commuting variables we have
		$$\lim_{m,\U}\norm{P(g_{-n,m}, \cdots, g_{n,m})D_m^{\frac{1}{p}}}_p
		= \norm{P(G_{-n},\cdots, G_n)D_q^{\frac{1}{p}}}_p.$$
	\end{lem}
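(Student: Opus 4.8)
The plan is to pass from the truncated variables $\tilde g_{i,m}=h(g_{i,m})$ back to the untruncated $g_{i,m}$ by controlling the $L^p$-norm of the difference $g_{i,m}-\tilde g_{i,m}$ and then invoking an approximate-triangle-inequality argument inside the polynomial. First I would record that, since we already know from the discussion preceding Proposition \ref{prop-state-preserving-map} (via \cite[Lemma 5.7]{N06}) that the moments of $g_{i,m}$ converge to those of the bounded operator $G_i$ with $\norm{G_i}_{\Gamma_q}<C$, we get a uniform bound $\sup_m\norm{g_{i,m}}_{L^{p'}(\Gamma_{n,m})}\le K_{p'}$ for every finite $p'$ by computing the even moment $\tau^\eps((g_{i,m}^2)^{p'})$ and passing to the limit. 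This gives, for any fixed polynomial $P$ in $2n$ variables, a uniform-in-$m$ bound on $\norm{P(g_{-n,m},\dots,g_{n,m})D_m^{1/p}}_p$ and on $\norm{P(\tilde g_{-n,m},\dots,\tilde g_{n,m})D_m^{1/p}}_p$, using Hölder in the Haagerup $L^p$-space to split a product into single factors against powers of $D_m^{1/p}$.

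Next, the key quantitative estimate: $\norm{(g_{i,m}-\tilde g_{i,m})D_m^{1/(2p')}}_{2p'}\to 0$ as $m\to\U$ for each fixed $p'$. Since $g_{i,m}-\tilde g_{i,m}=(x-h(x))|_{x=g_{i,m}}$ and $x-h(x)$ is supported on $\{|x|\ge C\}$ and bounded by $|x|$ there, we have $|g_{i,m}-\tilde g_{i,m}|^{2p'}\le |g_{i,m}|^{2p'}\,1_{\{|g_{i,m}|\ge C\}}$; taking the $\tau^\eps$-state (weighted appropriately by the density, noting $g_{i,m}$ need not commute with $D_m$, so one first symmetrizes or uses the tracial trick on $\Gamma_{n,m}$ which is a finite-dimensional matrix algebra) and using that a higher moment $\tau^\eps(|g_{i,m}|^{2p'+2})$ stays bounded while the measure of $\{|g_{i,m}|\ge C\}$ tends to $0$ (because $\norm{G_i}<C$ means the limiting spectral distribution has no mass at or beyond $C$, so $\tau^\eps(1_{[C,\infty)}(|g_{i,m}|))\to 0$), we obtain the claim via Chebyshev/Hölder. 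The finite-dimensionality of $\Gamma_{n,m}$ makes all these manipulations legitimate, and passing along $\U$ turns ``$\to 0$'' into an honest limit.

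Then I would conclude with a telescoping argument: write $P(g_{-n,m},\dots,g_{n,m})-P(\tilde g_{-n,m},\dots,\tilde g_{n,m})$ as a sum of terms, each of which is a product of factors that are either $g_{j,m}$, $\tilde g_{j,m}$, or exactly one factor of the form $g_{j,m}-\tilde g_{j,m}$. Applying Hölder in $L^p(\Gamma_{n,m})$ (distributing the density $D_m^{1/p}$ as $\prod D_m^{1/(pN)}$ over the $N$ factors, which is permissible since in a matrix algebra we may also just estimate in a fixed trace and use norm equivalences, or more cleanly use the uniform $L^{p'}$-bounds above together with the generalized Hölder inequality for Haagerup spaces), each such term is bounded by (product of uniform bounds) $\times\ \norm{(g_{j,m}-\tilde g_{j,m})D_m^{\theta}}_{p'}$ for suitable exponents, hence tends to $0$ along $\U$. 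Therefore $\lim_{m,\U}\big|\norm{P(g_{-n,m},\dots,g_{n,m})D_m^{1/p}}_p-\norm{P(\tilde g_{-n,m},\dots,\tilde g_{n,m})D_m^{1/p}}_p\big|=0$, and combining with the already-established limit for the truncated variables gives the statement.

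The main obstacle I expect is the bookkeeping in the non-tracial Haagerup $L^p$-setting: the variables $g_{i,m}$ do not commute with the density $D_m$, so one cannot naïvely split $\norm{\cdots D_m^{1/p}}_p$ as a product of $L^{p'}$-norms times $\norm{D_m^{1/p'}}$. The clean way around this is to exploit that $\Gamma_{n,m}$ is finite-dimensional, fix the canonical trace, identify $L^p(\Gamma_{n,m},\tau^\eps)$ with a weighted Schatten class $\|D_m^{1/2p}\,x\,D_m^{1/2p}\|_{S^p}$, and then run all Hölder estimates there — or alternatively invoke the uniform boundedness of the operator norms $\norm{\tilde g_{i,m}}_\infty\le C$ for the truncated ones and handle the single ``bad'' factor $g_{j,m}-\tilde g_{j,m}$ separately in $L^{p}$ with the remaining bounded factors absorbed into the operator norm. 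Either route is routine once set up carefully, but getting the exponents and the placement of $D_m$ consistent is where the care is needed.
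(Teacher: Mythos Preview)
Your approach is genuinely different from the paper's and considerably more involved. The paper exploits the hypothesis $1\le p\le 2$ in a decisive way: it first shows
\[
\big\|\big(P(g_{-n,m},\dots,g_{n,m})-P(\tilde g_{-n,m},\dots,\tilde g_{n,m})\big)D_m^{1/2}\big\|_2 \longrightarrow 0,
\]
which is immediate because $\|xD_m^{1/2}\|_2^2=\tau^\eps(x^*x)$ reduces everything to mixed moments in the $g_{j,m}$ and $\tilde g_{j,m}$, and those are handled directly by the moment limits already extracted from \cite[Lemma~5.7]{N06} (together with their adjoints). Then the contractive embedding $L^2(\tau^\eps)\hookrightarrow L^p(\tau^\eps)$, valid precisely for $p\le 2$, transports this to $L^p$ for free. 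No H\"older splitting, no distribution of the density among the factors, and no operator-norm control on the untruncated $g_{j,m}$ are needed.

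Your route works directly in $L^p$, and the obstacle you yourself flag at the end is real and not resolved in what you wrote. In the telescoping sum a typical term has the shape
\[
g_{j_1,m}\cdots g_{j_{k-1},m}\,(g_{j_k,m}-\tilde g_{j_k,m})\,\tilde g_{j_{k+1},m}\cdots\tilde g_{j_N,m}\,D_m^{1/p};
\]
the factors to the left of the difference are untruncated and hence \emph{not} uniformly bounded in operator norm (indeed $\|s_{i,m}\|_\infty$ grows like $\sqrt m$), so ``absorb the bounded factors into the operator norm'' does not dispose of them. The alternative of writing $D_m^{1/p}=\prod D_m^{1/(pN)}$ and applying H\"older requires commuting pieces of the density past the $g_{j,m}$; since $g_{j,m}=\mathrm{Re}(s_{j,m})$ is a sum of modular eigenvectors with \emph{distinct} eigenvalues $\mu_j^{\pm 4it}$, and $\tilde g_{j,m}=h(g_{j,m})$ has no tractable modular behaviour, this commutation is not innocent. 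The weighted-Schatten workaround is in principle available, but then you must track the $m$-dependence of all implied constants (both $D_m$ and the ambient algebra vary with $m$), which you have not done. As written there is a gap at the H\"older step; the paper sidesteps it entirely by passing through $L^2$. If your argument were made rigorous it would have the merit of not being tied to $p\le 2$, which is presumably why the paper records separately, without proof, that the lemma extends to $2<p<\infty$.
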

\begin{proof}
In the proof of \cite[Lemma 5.7]{N06} it is shown that
	$$\lim_{m\rightarrow \infty}\abs{\tau^\eps(\tilde{g}_{n,j_1}\cdots \tilde{g}_{n,j_{k-1}}(g_{n,j_k} - \tilde{g}_{n,j_k})
	g_{n,j_{k+1}} \cdots g_{n,j_l})} = 0$$
for any indices $j_1,\cdots, j_l$ and $1\le k\le l$.
By taking involution inside the functional $\tau^\eps$ we also get
	$$\lim_{m\rightarrow \infty}\abs{\tau^\eps(g_{n,j_1}\cdots g_{n,j_{k-1}}(g_{n,j_k} - \tilde{g}_{n,j_k})
	\tilde{g}_{n,j_{k+1}} \cdots \tilde{g}_{n,j_l})} = 0.$$
If we apply the above limits repeatedly, then we have
	\begin{equation}\label{lim1}
	\lim_{m\rightarrow \infty}\abs{\tau^\eps(\tilde{g}_{n,j_1}\cdots \tilde{g}_{n,j_{k-1}}g_{n,j_k}\cdots g_{n,j_l})
	-\tau^\eps(g_{n,j_1}\cdots g_{n,j_{k-1}}g_{n,j_k}\cdots g_{n,j_l})} = 0
	\end{equation}
and
	\begin{equation}\label{lim2}
	\lim_{m\rightarrow \infty}\abs{\tau^\eps(g_{n,j_1}\cdots g_{n,j_{k-1}}\tilde{g}_{n,j_k}\cdots \tilde{g}_{n,j_l})
	-\tau^\eps(\tilde{g}_{n,j_1}\cdots \tilde{g}_{n,j_{k-1}}\tilde{g}_{n,j_k}\cdots \tilde{g}_{n,j_l})} = 0.
	\end{equation}
Now we consider any polynomial $P$ in $2n$ non-commuting variables, then we have
	\begin{align*}
	\lefteqn{\norm{P(g_{-n,m}, \cdots, g_{n,m})D_m^{\frac{1}{2}} - P(\tilde{g}_{-n,m}, \cdots, \tilde{g}_{n,m})D_m^{\frac{1}{2}}}^2_2}\\
	& = \abs{\tau^\eps(P^*P - \tilde{P}^*P - P^*\tilde{P} + \tilde{P}^*\tilde{P})}
	\le \abs{\tau^\eps(P^*P - \tilde{P}^*P)} + \abs{\tau^\eps(P^*\tilde{P} - \tilde{P}^*\tilde{P})},
	\end{align*}
where $P$ and $\tilde{P}$ denote $P(g_{-n,m}, \cdots, g_{n,m})$ and $P(\tilde{g}_{-n,m}, \cdots, \tilde{g}_{n,m})$, respectively.
Since $P^*P - \tilde{P}^*P$ and $P^*\tilde{P} - \tilde{P}^*\tilde{P}$ are linear combinations of the terms of the forms
	$$\tilde{g}_{n,j_1}\cdots \tilde{g}_{n,j_{k-1}}g_{n,j_k}\cdots g_{n,j_l} - g_{n,j_1}\cdots g_{n,j_{k-1}}g_{n,j_k}\cdots g_{n,j_l}$$
and
	$$g_{n,j_1}\cdots g_{n,j_{k-1}}\tilde{g}_{n,j_k}\cdots \tilde{g}_{n,j_l}
	- \tilde{g}_{n,j_1}\cdots \tilde{g}_{n,j_{k-1}}\tilde{g}_{n,j_k}\cdots \tilde{g}_{n,j_l},$$
respectively, \eqref{lim1} and \eqref{lim2} imply that
	$$\lim_{m\rightarrow \infty}
	\norm{P(g_{-n,m}, \cdots, g_{n,m})D_m^{\frac{1}{2}} - P(\tilde{g}_{-n,m}, \cdots, \tilde{g}_{n,m})D_m^{\frac{1}{2}}}_2 = 0.$$
Since $L^2(\tau^\eps)$ embeds into $L^p(\tau^\eps)$ contractively we get
	$$\lim_{m\rightarrow \infty}
	\norm{P(g_{-n,m}, \cdots, g_{n,m})D_m^{\frac{1}{p}} - P(\tilde{g}_{-n,m}, \cdots, \tilde{g}_{n,m})D_m^{\frac{1}{p}}}_p = 0,$$
so that
	\begin{align*}
	\lim_{m,\U}\norm{P(g_{-n,m}, \cdots, g_{n,m})D_m^{\frac{1}{p}}}_p
	& = \lim_{m,\U}\norm{P(\tilde{g}_{-n,m}, \cdots, \tilde{g}_{n,m})D_m^{\frac{1}{p}}}_p\\
	& = \norm{P(G_{-n},\cdots, G_n)D_q^{\frac{1}{p}}}_p.
	\end{align*}
\end{proof}

	\begin{rem}{\rm We can extend Lemma \ref{lem-Lp-conv} for the case $2<p<\infty$.
	}
	\end{rem}

The following lemma is a non-tracial version of \cite[Lemma 5]{Bi97}.

	\begin{lem}\label{lem-lim-OU-semigp}
	For any $*$-polynomial $Q$ in $n$ non-commuting variables and $1\le p \le 2$ we have
		$$\lim_{m\rightarrow \infty}\norm{P^\eps_t(Q(s_{1,m}, \cdots, s_{n,m}))D_m^{\frac{1}{p}}}_p
		= \norm{P^q_t(Q(g_{q,1},\cdots, g_{q,n}))D_q^{\frac{1}{p}}}_p.$$
	\end{lem}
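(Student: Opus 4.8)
The plan is to follow the scheme of \cite[Lemma 5]{Bi97}, upgrading convergence in distribution to the $L^p$-convergence already secured in Lemma \ref{lem-Lp-conv}. The conceptual point is that $P^q_t$ and $P^\eps_t$ are both \emph{diagonal on chaos}: $P^q_t$, being the second quantization of $e^{-t}\,\mathrm{id}$, multiplies the degree-$k$ component of a vector in $\F_q(\Hi)$ by $e^{-kt}$, and likewise $P^\eps_t=e^{-tN_\eps}$ multiplies the span of reduced words of length $k$ by $e^{-kt}$. So applying either semigroup to a polynomial only rescales its Wick (chaos) components, and the passage ``polynomial $\leftrightarrow$ Wick components'' is governed, in bounded degree, by universal formulas whose coefficients depend solely on the covariances of the generators.

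Concretely, first I would fix $d=\deg Q$ and record the Wick decomposition on the $q$-side: there is a $*$-polynomial $R$ of degree $\le d$ in $n$ variables, whose coefficients are universal expressions (involving the factors $e^{-kt}$, $0\le k\le d$) in the mixed moments $\tau_q(g_{q,i_1}^{\#}\cdots g_{q,i_l}^{\#})$, $l\le d$, such that
$$R(g_{q,1},\dots,g_{q,n}) = P^q_t\big(Q(g_{q,1},\dots,g_{q,n})\big);$$
indeed each tensor $e_{j_1}\otimes\cdots\otimes e_{j_k}$ ($\abs{j_l}\le n$, $k\le d$) is obtained by applying a $*$-polynomial in the $g_{q,i}$'s to $\Om$ (induction on $k$ using $g_{q,i}\Om=\mu_i^{-1}e_i$, $g_{q,i}^*\Om=\mu_ie_{-i}$ and the creation/annihilation relations), hence so is, as an element of $\Gamma_q$, each chaos component of $Q(g_{q,1},\dots,g_{q,n})$, and $P^q_t$ simply weights the degree-$k$ component by $e^{-kt}$. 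Running the \emph{same} construction inside $\Gamma_{n,m}$ with $s_{i,m}$ in place of $g_{q,i}$ (and $N_\eps$ in place of the number operator) produces $*$-polynomials $R^{(m)}$ of degree $\le d$ with
$$R^{(m)}(s_{1,m},\dots,s_{n,m}) = P^\eps_t\big(Q(s_{1,m},\dots,s_{n,m})\big),$$
the coefficients of $R^{(m)}$ being the same universal expressions evaluated at the moments $\tau^\eps(s_{i_1,m}^{\#}\cdots s_{i_l,m}^{\#})$. By Proposition \ref{prop-n-var} these moments converge, for the fixed choice of sign $\eps$, to the $q$-moments, so the coefficients of $R^{(m)}$ converge to those of $R$.

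Then I would pass to the limit using Lemma \ref{lem-Lp-conv}. Writing $s_{i,m}=g_{i,m}+ig_{-i,m}$ and $g_{q,i}=G_i+iG_{-i}$, both $R^{(m)}(s_{1,m},\dots,s_{n,m})$ and $R(s_{1,m},\dots,s_{n,m})$ are $*$-polynomials of degree $\le d$ in the $2n$ self-adjoint variables $g_{\pm1,m},\dots,g_{\pm n,m}$. Lemma \ref{lem-Lp-conv} shows that $\norm{g_{j_1,m}\cdots g_{j_l,m}D_m^{1/p}}_p$ converges, hence is bounded in $m$, for every monomial with $\abs{j_r}\le n$; combining this with the coefficient convergence of the previous step and the triangle inequality gives $\norm{(R^{(m)}-R)(s_{1,m},\dots,s_{n,m})D_m^{1/p}}_p\to 0$. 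A second application of Lemma \ref{lem-Lp-conv} to the \emph{fixed} polynomial $R$ gives $\norm{R(s_{1,m},\dots,s_{n,m})D_m^{1/p}}_p\to\norm{R(G_{-n},\dots,G_n)D_q^{1/p}}_p=\norm{P^q_t(Q(g_{q,1},\dots,g_{q,n}))D_q^{1/p}}_p$; putting these together yields the assertion along every free ultrafilter, and, the right-hand side being ultrafilter-independent, along the genuine limit $m\to\infty$ as stated. I expect the only real work to be in the first half, namely checking carefully that $P^\eps_t$ of a polynomial is again $R^{(m)}$ applied to the $s_{i,m}$ and that the Wick coefficients depend continuously (indeed polynomially) on moments controlled by the central limit --- in particular that the covariance forms one inverts when extracting chaos components stay non-degenerate throughout the range $-1\le q<1$. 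Everything after that is a soft consequence of Lemma \ref{lem-Lp-conv}.
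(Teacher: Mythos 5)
Your overall scheme --- chaos (Wick) decomposition, diagonal action of both semigroups on chaos, then Lemma \ref{lem-Lp-conv} applied to a fixed limiting polynomial --- is the intended route (the paper simply defers to \cite[Lemma 5]{Bi97}). However, the step you flag as ``the only real work'' in fact fails as you state it: there is in general \emph{no} $*$-polynomial $R^{(m)}$ with $R^{(m)}(s_{1,m},\dots,s_{n,m})=P^\eps_t(Q(s_{1,m},\dots,s_{n,m}))$, because $P^\eps_t$ does not preserve the $*$-algebra generated by the $s_{i,m}$. Take $Q=X_iX_kX_i^*$ with $i\neq k$. A direct computation with the creations/annihilations $\beta^{\#}_{(\pm r,\pm j)}$ shows that the degree-one component of $s_{i,m}s_{k,m}s^*_{i,m}1$ (the $e^{-t}$-eigenvector part for $e^{-tN_\eps}$) equals
$$\frac{\mu_i^2}{\mu_k\,m^{3/2}}\sum_{j}\Big(\sum_{j'}\eps\big((i,j'),(k,j)\big)\Big)x_{(k,j)},$$
coming from the contraction of the first and third letters across the second. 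Any vector of the form $p(s_{1,m},\dots,s_{n,m})1$ has degree-one component lying in the span of the $s^{\#}_{r,m}1$, i.e.\ with coefficients \emph{constant} in $j$; but for the fixed generic choice of signs the inner sums $\sum_{j'}\eps((i,j'),(k,j))$ equal $mq$ only up to fluctuations of order $\sqrt{m\log m}$ depending on $j$. Only the ``mean part'' $q\mu_i^2\,s_{k,m}1$ of the displayed vector is of the required form; the chaos components of $Q(s_{\cdot,m})1$ are therefore \emph{not} obtained by applying polynomials in the $s_{i,m}$ to $1$, and your central identity is false for finite $m$.

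What makes the lemma true is that the discrepancy is asymptotically negligible: the fluctuation above has $L^2(\tau^\eps)$-norm $O(\sqrt{\log m/m})$, and more generally one has $\norm{\big(P^\eps_t(Q(s_{\cdot,m}))-R(s_{\cdot,m})\big)D_m^{1/2}}_2\to 0$ with $R$ the limiting polynomial; since $xD_m^{1/2}\mapsto xD_m^{1/p}$ is an $L^2\to L^p$ contraction for $p\le 2$ (as already used in the proof of Lemma \ref{lem-Lp-conv}), the rest of your argument (Lemma \ref{lem-Lp-conv} applied to the fixed $R$, then the ultrafilter remark) goes through unchanged. But establishing that $L^2$ estimate for a general $*$-monomial is exactly the Speicher-type second-moment/pairing computation that constitutes the substance of \cite[Lemma 5]{Bi97} and of \cite[Lemma 5.7]{N06}; it cannot be replaced by an exact algebraic identity, and it is the piece missing from your proposal. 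Note also that Proposition \ref{prop-n-var} alone does not deliver it, since the error term is not a polynomial in the $s_{i,m}$ and so its $L^2$-norm is not directly one of the converging moments.
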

\begin{proof}
The proof is essentially the same as \cite[Lemma 5]{Bi97}, so that we omit it.
Note that we need Lemma \ref{lem-Lp-conv} for the conclusion.
\end{proof}
\begin{proof}[Proof of Theorem \ref{thm-main-if}] By a standard density argument it is enough to consider the case dim$\Hi = n$.
Then for any $*$-polynomial $Q$ in $n$ non-commuting variables and a fixed free ultrafilter $\U$ on $\n$ we have
	$$\norm{P^q_t(Q(g_{q,1},\cdots, g_{q,n}))D_q^{\frac{1}{2}}}_2
	= \lim_{m, \U}\norm{P^\eps_t(Q(s_{1,m}, \cdots, s_{n,m}))D_m^{\frac{1}{2}}}_2$$
by Lemma \ref{lem-lim-OU-semigp}. Theorem \ref{thm-eps} implies that
	$$\norm{P^\eps_t(Q(s_{1,m}, \cdots, s_{n,m}))D_m^{\frac{1}{2}}}_2 \le \norm{Q(s_{1,m}, \cdots, s_{n,m}))D_m^{\frac{1}{p}}}_p$$
if $e^{-2t} \le C\alpha_\mu^{4-\frac{8}{p}}(p-1)$, where $C$ is the constant in Theorem \ref{thm-eps}.
Applying Lemma \ref{lem-Lp-conv} we get
	$$\norm{P^q_t(Q(g_{q,1},\cdots, g_{q,n}))D_q^{\frac{1}{2}}}_2
	\le \norm{P^q_t(Q(g_{q,1},\cdots, g_{q,n}))D_q^{\frac{1}{p}}}_p.$$
\end{proof}

\begin{rem}
{\rm
For the most general case of $\Gamma_q(H_\Real,(U_t))$ we use the discretization argument in \cite[section 6]{N06},
where the following embedding has been established.

For a fixed free ultrafilter $\U$ on $\n$ we have the following normal state-preserving $*$-isomorphism.
	$$\Theta : (\Gamma_q(H_\Real,(U_t)), \tau_q) \rightarrow \Pi_{n,\U}(\Gamma_n, \tau_n),\;\;
	G(e_i) \mapsto p\cdot( G_n(e_i) )_{n,\U}\cdot p,$$
where $(\Gamma_n, \tau_n)$'s are almost periodic $q$-Araki-Woods algebras, $G(e_i)$, $G_n(e_i)$'s are corresponding gaussians
and $p \in \Pi_{n,\U}\Gamma_n$ be the support of $\Pi_{n,\U}\tau_n$.

Then, the same ultraproduct argument as above proves Theorem \ref{thm-general}.
}
\end{rem}

\section{1-dimensional estimate}\label{sec-1dim}
We consider the ``only if" direction by examining 1-dimensional behavior as usual.
We start by an estimate of the $L^p$-norm of $g_i$, the $q$-gaussian with the parameter $\mu_i$.
As $g_i^*g_i$ is in the centralizer of $\varphi$.
$$\|g_iD^{\frac 1 p}_{\la i \ra}\|_p=\|D^{\frac 1 p}_{\la i
\ra}g_i^*g_iD^{\frac 1 p}_{\la i
\ra}\|_{p/2}^{\frac 1 2}=\varphi((g_i^*g_i)^{p/2})^{\frac 1 p}.$$ The self-adjoint
element $y=g_i^*g_i$ can be seen as a commutative random variable in
some probability space with measure induced by $\varphi$. It is well
known that $q$-creations are bounded for $-1\leq q<1$ (\cite[Lemma 4]{BS91}) so $\|y\|_\infty\sim \mu_i^2$ with constants
depending only on $q$. Moreover, we have already
seen that $\|y\|_1=\frac 1 {\mu_i^2}$ and $\|y\|_2\sim 1$. It follows
from the H\"older inequality, that
$\|y\|_p=\varphi((g_i^*g_i)^{p})^{\frac 1 p}\sim \mu_i^{2-4/p}$ with constants
depending only on $q$. So we conclude that for $p\geq 2$ 
$$\|g_iD^{\frac 1 p}_{\la i \ra}\|_p\sim \mu_i^{1-\frac 4p}.$$
By duality  if $P^q_t$ can be extended to a contraction from $L^p(\Gamma_q)$ into $L^2(\Gamma_q)$,
then it can also be extended from $L^2(\Gamma_q)$ into $L^{p'}(\Gamma_q)$, where $\frac{1}{p} + \frac{1}{p'} = 1$, so
	$$e^{-t}\leq \mu_i^{2-\frac 4p}.$$
That is
	\begin{thm}\label{thm-unbdd-case}
	Suppose that $\alpha_\mu = \sup_n \mu_n = \infty$,
	then $P^q_t$ can not be extended to a contraction from $L^p(\Gamma_q)$ into $L^2(\Gamma_q)$ for any $1\leq p<2$.
	\end{thm}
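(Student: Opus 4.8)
The plan is to establish the claimed non-contractivity by restricting attention to a single generalized gaussian $g_i$ and tracking its $L^p \to L^2$ behaviour under $P^q_t$, then letting $\mu_i \to \infty$. First I would record that for a single index $i$, the subalgebra $\Gamma_{\langle i\rangle}$ sits inside $\Gamma_q$ in a state-preserving way, so by Proposition~\ref{prop-Lp-ext} the relevant $L^p$-norms may be computed inside $\Gamma_{\langle i\rangle}$ alone; this is already implicit in the computation preceding the statement. Since $g_i^*g_i$ lies in the centralizer of $\varphi$, one has $\|g_i D^{1/p}_{\langle i\rangle}\|_p = \varphi((g_i^*g_i)^{p/2})^{1/p}$, and treating $y=g_i^*g_i$ as a bounded commutative random variable with $\|y\|_1 = \mu_i^{-2}$, $\|y\|_2 \sim 1$, $\|y\|_\infty \sim \mu_i^2$ (the last from boundedness of $q$-creation operators, \cite[Lemma 4]{BS91}), H\"older interpolation gives $\|g_i D^{1/p}_{\langle i\rangle}\|_p \sim \mu_i^{1-4/p}$ for $p\ge 2$, with constants depending only on $q$.

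Next I would exploit the self-adjointness structure: $P^q_t(g_i) = e^{-t} g_i$ by the second-quantization formula, so if $P^q_t$ extends to a contraction $L^p(\Gamma_q)\to L^2(\Gamma_q)$ for some $1\le p<2$, then by duality (using that $P^q_t$ is symmetric with respect to the trace-pairing, since it intertwines the modular group and is state-preserving) its adjoint extends to a contraction $L^2(\Gamma_q)\to L^{p'}(\Gamma_q)$ with $p'>2$ the conjugate index. Testing this adjoint contraction against $g_i D^{1/2}_{\langle i\rangle}$, whose $L^2$-norm is $\mu_i^{-1}$, against the output $e^{-t} g_i D^{1/p'}_{\langle i\rangle}$ with $L^{p'}$-norm $\sim \mu_i^{1-4/p'}$, yields an inequality of the form $e^{-t}\,\mu_i^{1-4/p'} \lesssim \mu_i^{-1}$, i.e.\ $e^{-t} \lesssim \mu_i^{-2+4/p'} = \mu_i^{4/p - 2}$ (rewriting via $1/p+1/p'=1$). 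Wait — I need to be careful with the direction of the exponent; the point is that for $p<2$ one gets $4/p-2>0$ only if... in fact for $1\le p<2$ we have $4/p-2 > 0$, so this does \emph{not} immediately blow up. The correct reading is the one in the excerpt: the inequality forces $e^{-t} \le \mu_i^{2-4/p}$ (for the $L^p\to L^2$ direction directly applied to $g_i$, comparing $\|e^{-t}g_i D^{1/2}\|_2 = e^{-t}\mu_i^{-1}$ with $\|g_i D^{1/p}\|_p$), and since $2-4/p<0$ for $p<2$, the right-hand side tends to $0$ as $\mu_i\to\infty$, which is impossible for any fixed $t\ge 0$.

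Assembling these pieces gives the theorem: under $\alpha_\mu = \sup_n \mu_n = \infty$ there is a sequence $i_k$ with $\mu_{i_k}\to\infty$, and for any fixed $1\le p<2$ and any $t\ge 0$ the necessary bound $e^{-t}\le c\,\mu_{i_k}^{2-4/p}$ fails for $k$ large, so $P^q_t$ cannot be a contraction $L^p(\Gamma_q)\to L^2(\Gamma_q)$. The main obstacle — and the place requiring the most care — is justifying the duality step rigorously: one must check that the $L^p\to L^2$ contraction of $P^q_t$ genuinely dualizes to an $L^2\to L^{p'}$ contraction in the Haagerup $L^p$ framework, which uses that $P^q_t$ is self-adjoint for the trace pairing (a consequence of state-preservation plus commuting with the modular group), together with the isometric identifications $L^p(\M)^* = L^{p'}(\M)$; and relatedly one should make sure the asymptotics $\|g_i D^{1/p}_{\langle i\rangle}\|_p \sim \mu_i^{1-4/p}$ have constants uniform in $\mu_i$, which follows from the uniform boundedness of $q$-creation operators. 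The rest is routine interpolation and a limiting argument.
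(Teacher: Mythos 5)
Your strategy is exactly the paper's: test on a single $g_i$, establish $\|g_iD^{1/r}_{\la i\ra}\|_r\sim\mu_i^{1-4/r}$ for $r\ge 2$ via H\"older, and pass to the dual map $L^2\to L^{p'}$. The only problem is a small arithmetic slip that then leads you to abandon the correct mechanism for an unjustified one. In the duality step you compute $e^{-t}\lesssim \mu_i^{-2+4/p'}$ and then rewrite $-2+4/p'$ as $4/p-2$; in fact $4/p'=4-4/p$, so $-2+4/p'=2-4/p<0$ for $1\le p<2$, and the duality argument closes immediately: $e^{-t}\lesssim\mu_i^{2-4/p}\to 0$ as $\mu_i\to\infty$, a contradiction for any fixed $t$. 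There was nothing to ``correct''.

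The fallback you adopt instead --- applying the $L^p\to L^2$ contraction directly to $g_i$ and comparing $e^{-t}\mu_i^{-1}$ with $\|g_iD^{1/p}_{\la i\ra}\|_p$ for $p<2$ --- is the step that would actually fail as written. To derive a contradiction that way you need the \emph{upper} bound $\|g_iD^{1/p}_{\la i\ra}\|_p\lesssim\mu_i^{1-4/p}$ for $p<2$, i.e.\ $\varphi\big((g_i^*g_i)^{p/2}\big)\lesssim\mu_i^{p-4}$ with $p/2<1$. The interpolation argument from $\|y\|_1=\mu_i^{-2}$, $\|y\|_2\sim1$, $\|y\|_\infty\sim\mu_i^2$ gives two-sided bounds only for exponents $\ge 1$ (hence for $\|g_iD^{1/r}\|_r$ only for $r\ge2$); for $r<1$ concavity gives only $\varphi(y^{r})\le\varphi(y)^{r}=\mu_i^{-2r}$, which is far weaker than $\mu_i^{2r-4}$, and whether the stronger bound holds depends on the fine structure of the spectral distribution of $g_i^*g_i$ near $0$, which neither you nor the paper establishes. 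So delete the detour, fix the sign, and keep the duality argument; your identification of the duality step (self-adjointness of $P^q_t$ for the trace pairing plus $L^p$--$L^{p'}$ duality) as the point needing care is correct and is exactly how the paper proceeds.
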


We give a more precise estimate for $p\to 1$. Let $n\in \n$, and we set
$a(\eps) = (1+\eps g_i)D^{\frac{1}{2n}}_{\la i \ra}$, $\eps>0$.  Then
we have
	$$\abs{a(\eps)}^{2n}
	= (D^{\frac{1}{2n}}_{\la i \ra}(1+\eps g_i+\eps g^*_i+\eps^2 g^*_ig_i)D^{\frac{1}{2n}}_{\la i \ra})^n.$$
If we expand the right hand side, then we get
	\begin{align*}
	D_{\la i \ra}
	& + \eps (D^{\frac{1}{2n}}_{\la i \ra} g_i D^{\frac{1}{2n}}_{\la i \ra} D^{\frac{n-1}{n}}_{\la i \ra}
	+ \cdots + D^{\frac{n-1}{n}}_{\la i \ra} D^{\frac{1}{2n}}_{\la i \ra} g_i D^{\frac{1}{2n}}_{\la i \ra})\\
	& + \eps (D^{\frac{1}{2n}}_{\la i \ra} g^*_i D^{\frac{1}{2n}}_{\la i \ra} D^{\frac{n-1}{n}}_{\la i \ra}
	+ \cdots + D^{\frac{n-1}{n}}_{\la i \ra} D^{\frac{1}{2n}}_{\la i \ra} g^*_i D^{\frac{1}{2n}}_{\la i \ra})\\
	& + n\eps^2 D_{\la i \ra}g^*_ig_i\\
	& + \eps^2 (D^{\frac{1}{2n}}_{\la i \ra} g^*_i D^{\frac{1}{n}}_{\la i \ra}
	g_i D^{\frac{1}{2n}}_{\la i \ra} D^{\frac{n-2}{n}}_{\la i \ra} + \cdots \\
	& \;\;\;\;\;\;\;\; + D^{\frac{1}{2n}}_{\la i \ra} g_i D^{\frac{1}{n}}_{\la i \ra}
	g^*_i D^{\frac{1}{2n}}_{\la i \ra} D^{\frac{n-2}{n}}_{\la i \ra} + \cdots \\
	& \;\;\;\;\;\;\;\; + D^{\frac{1}{2n}}_{\la i \ra} g^*_i D^{\frac{2}{n}}_{\la i \ra}
	g_i D^{\frac{1}{2n}}_{\la i \ra} D^{\frac{n-3}{n}}_{\la i \ra} + \cdots \\
	& \;\;\;\;\;\;\;\; + D^{\frac{1}{2n}}_{\la i \ra} g_i D^{\frac{2}{n}}_{\la i \ra}
	g^*_i D^{\frac{1}{2n}}_{\la i \ra} D^{\frac{n-3}{n}}_{\la i \ra} + \cdots)\\
	& + o(\eps^2)D_{\la i \ra}.
	\end{align*}
Thus, we have
	\begin{align*}
	\text{tr}(\abs{a(\eps)}^{2n})
	& = 1 + n\eps^2\mu^{-2}_i
	+ \eps^2 \Big(\sum^{n-1}_{k=1}(n-k)\mu^{\frac{4}{n}k-2}_i
	+ \sum^{n-1}_{k=1}k\mu^{-\frac{4}{n}(n-k)+2}_i\Big) + o(\eps^2)\\
	& = 1 + n \eps^2 \sum^{n-1}_{k=0}\mu^{\frac{4}{n}k-2}_i + o(\eps^2)
	= 1 + n \eps^2 \mu^{-2}_i \frac{\mu^4_i - 1}{\mu^{\frac{4}{n}}_i - 1} + o(\eps^2),
	\end{align*}
so that
	\begin{align*}
	\norm{P^t_q(a(\eps))}_{2n} & = \norm{a(e^{-t} \eps)}_{2n}\\
	& = \Big(1 + n e^{-2t} \eps^2 \mu^{-2}_i \frac{\mu^4_i - 1}{\mu^{\frac{4}{n}}_i - 1} + o(\eps^2)\Big)^{\frac{1}{2n}}\\
	& = 1 +  \frac{e^{-2t}\eps^2}{2}\mu^{-2}_i \frac{\mu^4_i - 1}{\mu^{\frac{4}{n}}_i - 1} + o(\eps^2)\\
	& \ge 1 + \frac{n}{2} e^{-2t}\eps^2 \mu^{2-\frac{4}{n}}_i + o(\eps^2).
	\end{align*}
	
Consequently, $\norm{P^t_q(a(\eps))}_{2n} \le \norm{a(\eps)}_2$ implies that
	$$1 + \frac{n}{2} e^{-2t}\eps^2 \mu^{2-\frac{4}{n}}_i + o(\eps^2) 
	\le 1 +  \frac{\eps^2}{2}\mu^{-2}_i + o(\eps^2),$$
which means
	\begin{equation}\label{1-dim-estimate}
	e^{-2t} \le \frac{1}{n}\mu^{-4+\frac{4}{n}}_i \le 2\mu^{-4+\frac{8}{2n}}_i\frac{1}{2n-1}
	\end{equation}
by taking $\eps \rightarrow 0$.
By duality we get the following.

	\begin{thm}\label{thm-1-dim}
	Let $\frac{1}{p} = 1 - \frac{1}{2n}$, $n(\ge 2) \in \n$. Then $\norm{P^t_q}_{L^p \rightarrow L^2} \le 1$ implies that
		$$e^{-2t} \le 2\alpha_\mu^{4-\frac{8}{p}}(p-1).$$
	\end{thm}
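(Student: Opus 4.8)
The plan is to derive the statement from duality together with the one-dimensional estimate \eqref{1-dim-estimate} already obtained above. First I would note that $P^q_t$ is state-preserving, completely positive and commutes with the modular group of $\tau_q$, so with respect to the standard $L^1$-pairing its $L^p$-extensions are mutually adjoint; consequently $\norm{P^q_t}_{L^p\to L^2}\le1$ is equivalent to $\norm{P^q_t}_{L^2\to L^{p'}}\le1$, where $\tfrac1p+\tfrac1{p'}=1$. Since $\tfrac1p=1-\tfrac1{2n}$, this dual exponent is exactly $p'=2n$, so it suffices to show that $\norm{P^q_t}_{L^2\to L^{2n}}\le1$ forces the stated bound on $e^{-2t}$.

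Next I would localize to one variable. Fix $i\ge1$ and let $\Gamma_{\la i\ra}=\{g_{q,i}\}''$. Because $\sigma^{\tau_q}_t(g_{q,i})=\mu_i^{4it}g_{q,i}$, the $\tau_q$-preserving conditional expectation $E_i\colon\Gamma_q\to\Gamma_{\la i\ra}$ is completely positive, state-preserving and intertwines the modular groups, so by Proposition \ref{prop-Lp-ext} it and the inclusion $\Gamma_{\la i\ra}\hookrightarrow\Gamma_q$ extend compatibly to the associated $L^2$- and $L^{2n}$-spaces, the inclusion being isometric; moreover $P^q_t$ commutes with $E_i$ by the second quantization formula. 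Hence $\norm{P^q_t}_{L^2(\Gamma_{\la i\ra})\to L^{2n}(\Gamma_{\la i\ra})}\le\norm{P^q_t}_{L^2(\Gamma_q)\to L^{2n}(\Gamma_q)}\le1$ for every $i$.

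At this point the hard analytic work is already behind us: the first–second order perturbation carried out before the statement — testing $\norm{P^q_t(a(\eps))}_{2n}\le\norm{a(\eps)}_2$ on $a(\eps)=(1+\eps g_i)D^{1/2n}_{\la i\ra}$, expanding in $\eps$ using \eqref{gq-in-Lp}, and letting $\eps\to0$ — yields precisely \eqref{1-dim-estimate}, i.e. $e^{-2t}\le\tfrac1n\mu_i^{-4+4/n}\le 2\mu_i^{-4+\frac8{2n}}\tfrac1{2n-1}$ for each $i$. It remains to unwind the exponents: from $\tfrac1p=1-\tfrac1{2n}$ one gets $p-1=\tfrac1{2n-1}$ and $4-\tfrac8p=-4+\tfrac4n\le0$; since this exponent is nonpositive, $\inf_i\mu_i^{4-8/p}=\alpha_\mu^{4-8/p}$. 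Taking the infimum over $i$ in \eqref{1-dim-estimate} therefore gives $e^{-2t}\le\tfrac2{2n-1}\inf_i\mu_i^{-4+4/n}=2(p-1)\alpha_\mu^{4-\frac8p}$, which is the claim (and vacuous, hence still correct, when $\alpha_\mu=\infty$, in agreement with Theorem \ref{thm-unbdd-case}).

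The only point requiring any care — since all the estimates are already in place — is the bookkeeping of the first two paragraphs: verifying that $P^q_t$ is self-dual for the $L^p$-pairing and that passing to $\Gamma_{\la i\ra}$ via $E_i$ is compatible both with the $L^p$-norms and with the semigroup. Both reduce to $g_{q,i}$ being analytic for $\sigma^{\tau_q}$ with \eqref{gq-in-Lp}, together with Proposition \ref{prop-Lp-ext}; after that the theorem is an immediate consequence of \eqref{1-dim-estimate} and the exponent identities.
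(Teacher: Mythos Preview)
Your proposal is correct and follows exactly the paper's approach: the paper's own proof is the single phrase ``By duality we get the following,'' and you have simply spelled out that duality step together with the localization to $\Gamma_{\la i\ra}$ and the exponent bookkeeping needed to pass from \eqref{1-dim-estimate} to the stated bound in terms of $\alpha_\mu$ and $p-1$.
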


If we turn back to the baby Fock model, this one dimensional estimate
can be extended for all $1<p<2$. That is, Theorem \ref{thm-eps} is optimal.

\begin{prop}
Let $d\in \mathbb M_n$ be an invertible self-adjoint matrix, and $g\in \mathbb
M_n$, such that $dg=\lambda gd$ for some $\lambda>1$. Then, for
any $p>2$,
$$\| (1+\eps g)d \|_p^p={\rm Tr}\, d^{p} + \eps^2\left(\Big(\frac p
2 +c_{p,\lambda}\Big) \, {\rm Tr}\, d^{p}g^*g + 
c_{p,\frac 1 \lambda}{\rm Tr}\, d^{p}gg^*\right) + O(\eps^2), $$
where 
$$c_{p,\lambda}=\frac {{\lambda^p}-1} {(\lambda^2-1)(1-\frac
1{\lambda^2})}-\frac {p\lambda^2}
{2({\lambda^2}-1)}.$$
\end{prop}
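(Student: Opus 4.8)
The plan is to read $\norm{(1+\eps g)d}_p^p = {\rm Tr}\,\big(d(1+\eps g)^*(1+\eps g)d\big)^{p/2} = {\rm Tr}\,(A+\eps B+\eps^2 C)^{p/2}$ as a second-order perturbation of $A := d^2>0$, with $B := dgd+dg^*d=B^*$ and $C := dg^*gd$, and to apply the standard expansion
$$
{\rm Tr}\,\phi(A+R) = {\rm Tr}\,\phi(A) + {\rm Tr}\,\big(\phi'(A)R\big) + \tfrac12\,{\rm Tr}\,\big(D\phi'(A)[R]\,R\big) + o(\norm{R}^2),\qquad \phi(t)=t^{p/2},
$$
with $R := \eps B+\eps^2 C$; here $D\phi'(A)[\,\cdot\,]$ is the Fr\'echet derivative and, $d$ not being assumed positive, $d^p$ is understood as $(d^2)^{p/2}=|d|^p$ throughout. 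Since $R=\eps B+\eps^2 C$, the coefficient of $\eps$ in $\norm{(1+\eps g)d}_p^p$ is ${\rm Tr}(\phi'(A)B)$ and the coefficient of $\eps^2$ is ${\rm Tr}(\phi'(A)C)+\tfrac12{\rm Tr}(D\phi'(A)[B]\,B)$.

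Next I would exploit $dg=\lambda gd$. Diagonalizing $d$, the relation reads $d_ig_{ij}=\lambda g_{ij}d_j$, so $g_{ij}\neq 0$ forces $d_i=\lambda d_j$ and hence $a_i:=d_i^2=\lambda^2 a_j$; in particular $g$ has zero diagonal, $g_{ij}$ and $g_{ji}$ are never simultaneously nonzero, and a degenerate eigenvalue of $A$ carries no entry of $g$. Moreover $|d|^sg=\lambda^sg\,|d|^s$ for all $s$ and $g^*g,\,gg^*$ lie in the centralizer of $d$; consequently ${\rm Tr}\,d^pg=\lambda^p\,{\rm Tr}\,d^pg$, which forces ${\rm Tr}\,d^pg=0$ since $\lambda>1$, and likewise ${\rm Tr}\,d^pg^*=0$ and ${\rm Tr}\,d^pgg^*=\lambda^p\,{\rm Tr}\,d^pg^*g$. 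The first of these kills the $\eps$-term: ${\rm Tr}(\phi'(A)B)=\tfrac p2({\rm Tr}\,d^pg+{\rm Tr}\,d^pg^*)=0$. Also ${\rm Tr}(\phi'(A)C)=\tfrac p2\,{\rm Tr}\big((d^2)^{p/2-1}\,dg^*gd\big)=\tfrac p2\,{\rm Tr}\,d^pg^*g$, using that $g^*g$ commutes with $d$.

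It remains to compute $\tfrac12{\rm Tr}(D\phi'(A)[B]\,B)$. By the Daleckii--Krein formula in the eigenbasis of $A$, $(D\phi'(A)[B])_{ij}=(\phi')^{[1]}(a_i,a_j)\,B_{ij}$ with $(\phi')^{[1]}$ the first divided difference of $\phi'=\tfrac p2 t^{p/2-1}$, so $\tfrac12{\rm Tr}(D\phi'(A)[B]\,B)=\tfrac12\sum_{i,j}(\phi')^{[1]}(a_i,a_j)\,|B_{ij}|^2$. As $B_{ij}=d_id_j(g_{ij}+\overline{g_{ji}})$ with at most one summand nonzero and the $(i,j)$- and $(j,i)$-terms equal, this equals $\sum_{(i,j):\,g_{ij}\neq 0}(\phi')^{[1]}(a_i,a_j)\,a_ia_j\,|g_{ij}|^2$; substituting $a_i=\lambda^2 a_j$ turns $(\phi')^{[1]}(a_i,a_j)$ into $\tfrac p2 a_j^{p/2-2}\tfrac{\lambda^{p-2}-1}{\lambda^2-1}$ and $a_ia_j$ into $\lambda^2a_j^2$, and then summing over $i$ with $\sum_i|g_{ij}|^2=(g^*g)_{jj}$ collapses the whole sum to $\tfrac p2\lambda^2\tfrac{\lambda^{p-2}-1}{\lambda^2-1}\,{\rm Tr}\,d^pg^*g$. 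Adding the two contributions, the $\eps^2$-coefficient is $\tfrac p2\big(1+\lambda^2\tfrac{\lambda^{p-2}-1}{\lambda^2-1}\big){\rm Tr}\,d^pg^*g=\tfrac p2\tfrac{\lambda^p-1}{\lambda^2-1}\,{\rm Tr}\,d^pg^*g$. Finally, using ${\rm Tr}\,d^pgg^*=\lambda^p{\rm Tr}\,d^pg^*g$ and the elementary identity $c_{p,\lambda}+\lambda^pc_{p,1/\lambda}=\tfrac p2\lambda^2\tfrac{\lambda^{p-2}-1}{\lambda^2-1}$ (a short computation from the definition of $c_{p,\lambda}$, in which the $(\lambda^2-1)^{-2}$-terms cancel), one rewrites this coefficient as $\big(\tfrac p2+c_{p,\lambda}\big){\rm Tr}\,d^pg^*g+c_{p,1/\lambda}{\rm Tr}\,d^pgg^*$, which is the assertion (the final $O(\eps^2)$ being meant as $o(\eps^2)$).

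The main obstacle is the divided-difference bookkeeping in the previous paragraph: one must keep $(\phi')^{[1]}(a_i,a_j)$ straight under the constraint $a_i/a_j=\lambda^2$, correctly use the $(i,j)\leftrightarrow(j,i)$ symmetry and the fact that $g_{ij}$ and $g_{ji}$ are never both nonzero, and recognize the final index sum as ${\rm Tr}\,d^pg^*g$. A cleaner equivalent route for that step is to decompose $g=\sum_\mu P_{\lambda\mu}gP_\mu$ into one-step shifts with pairwise orthogonal domains and ranges; the cross-terms between distinct shifts drop out of the $\eps^2$-coefficient, and each shift's contribution is read off from the $2\times2$ block $\left(\begin{smallmatrix}\mu&0\\ \eps\mu t&\lambda\mu\end{smallmatrix}\right)$ (with $t$ one of its singular values), whose determinant is independent of $\eps$ because the off-diagonal part of $g$ is nilpotent there, so that the perturbed eigenvalues $\nu_\pm$ follow from $\nu_+\nu_-=\lambda^2\mu^4$ and $\nu_++\nu_-=\mu^2(1+\lambda^2+\eps^2t^2)$ alone.
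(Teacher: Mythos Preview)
Your proof is correct and follows essentially the same route as the paper: both expand ${\rm Tr}\,(d^2+\eps B+\eps^2 C)^{p/2}$ to second order via divided differences of $t\mapsto t^{p/2}$ and exploit the commutation $dg=\lambda gd$ at the spectral level. The only organisational difference is that the paper tracks the $g^*g$ and $gg^*$ contributions separately (via $p_\alpha g=g p_{\alpha/\lambda}$) and lands directly on the coefficients $\tfrac p2+c_{p,\lambda}$ and $c_{p,1/\lambda}$, whereas you first collapse everything to the single scalar $\tfrac p2\,\tfrac{\lambda^p-1}{\lambda^2-1}\,{\rm Tr}\,d^pg^*g$ and then split it back using the identity $c_{p,\lambda}+\lambda^p c_{p,1/\lambda}=\tfrac p2\lambda^2\tfrac{\lambda^{p-2}-1}{\lambda^2-1}$ together with ${\rm Tr}\,d^pgg^*=\lambda^p\,{\rm Tr}\,d^pg^*g$.
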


\begin{proof}
We use the well known fact that on positive definite matrices,
the map $f: x\mapsto x^{p/2}$ is $C^\infty$. Moreover, its derivative
at $x$ can be expressed easily in terms of the spectral decomposition of
$x$ and divided differences of $f$; if $x=\sum s p_s$ is the spectral
decomposition of $x$, then for $h\in \mathbb M_n^{sa}$:
$${\rm diff}_x f. h= \sum_{s,t} f_1(s,t) p_shp_t$$
$${\rm diff^2}_x f. (h,h)= 2.\sum_{s,t,u} f_2(s,t,u) p_shp_thp_u$$
where 
$$f_1(a,b)=\left\{ \begin{array}{ll}\frac{f(a)-f(b)}{a-b} & \textrm{if } a\neq b \\
f'(a) & \textrm{if } a= b \end{array}\right.$$
$$f_2(a,b,c)=\left\{ \begin{array}{ll}
\frac{f_1(a,c)-f_1(b,c)}{a-b} & \textrm{if } a\neq b \\
\lim_{h\to 0}\frac{f_1(a+h,c)-f_1(a,c)}{h} & \textrm{if } a= b \end{array}\right.$$
Under the trace, for our choice of $f$ :
$${\rm Tr}\,({\rm diff}_x  f. h)= \frac p 2 \, {\rm Tr}\, x^{p/2-1} h$$ 
$${\rm Tr}\,({\rm diff^2}_x  f. (h,h))=  2 \cdot {\rm Tr}\, \Big(\sum_{s,t} f_2(s,t,s) p_shp_th\Big).$$ 
We want the expansion at the second order in $\eps$ of 
$$\|(1+\eps g)d\|_p^p= {\rm Tr}\, (d^2 + \eps d(g+g^*)d +\eps^2 d
g^*gd)^{p/2}.$$ By the above formula, with $x=d^2$, the first order
term is $\frac p 2 \, {\rm Tr}\, d^{p}(g+g^*)=0$ because of the
commutation relation as $\lambda\neq 1$.

By the Taylor expansion, the second order term has two contributions,
one from the first derivative, the other coming from one half the second one.
The first is given by $\frac p 2 \, {\rm Tr}\, d^{p}g^*g$. The second
is more involved; let $d=\sum_{\alpha\in \sigma(d)} \alpha p_{\alpha}$
be its spectral decomposition, we get
\begin{eqnarray*}
 A&=&{\rm Tr}\, \Big(\sum_{\alpha,\beta
\in \sigma (d)} f_2(\alpha^2,\beta^2,\alpha^2) 
p_\alpha d(g+g^*)dp_\beta d(g+g^*)d \Big)\\ 
&=&{\rm Tr}\, \Big(\sum_{\alpha,\beta
\in \sigma (d)} (\alpha\beta)^2f_2(\alpha^2,\beta^2,\alpha^2) 
p_\alpha (g+g^*)p_\beta (g+g^*) p_\alpha\Big).
\end{eqnarray*}
The relation $dg=\lambda gd$ gives that $P(d) g=g P(\lambda d)$ for
any polynomial $P$. It yields $p_\alpha g= g p_{\frac \alpha\lambda}$, where
$p_{\frac\alpha\lambda}$ is zero if $\frac\alpha \lambda$ is not a eigenvalue of $d$.  
In particular, 
$$p_\alpha (g+g^*)p_\beta(g+g^*) p_\alpha= g^2p_{\frac{\alpha}{\lambda^2}}
p_{\frac\beta\lambda}p_\alpha+gg^*p_\alpha p_{ \beta\lambda}p_\alpha+g^*g p_\alpha
p_{ \frac\beta\lambda}p_\alpha +g^{*2}p_{\alpha \lambda^2}p_\alpha p_{ \beta\lambda} p_\alpha.$$
Thus
$$
 A={\rm Tr}\Big(\sum_{\alpha\in \sigma (d)}
\frac {\alpha^4}{\lambda^2} f_2(\alpha^2,\frac{\alpha^2}{\lambda^2},\alpha^2)
p_\alpha gg^*\Big)+ {\rm Tr}\Big(\sum_{\alpha\in \sigma (d)}
{\alpha^4}{\lambda^2} f_2(\alpha^2,{\alpha^2}{\lambda^2},\alpha^2)
p_\alpha g^*g\Big).$$
Then, 
$$\frac {\alpha^4}{\lambda^2}
f_2(\alpha^2,\frac{\alpha^2}{\lambda^2},\alpha^2) =\alpha^p
\left(\frac p
{2(\lambda^2-1)}-\frac {1-\frac 1{\lambda^p}} {(\lambda^2-1)(1-\frac
1{\lambda^2})}\right)=\alpha^p c_{p,\frac 1 \lambda}.$$
Finally, 
$$A= c_{p,\frac 1 \lambda}{\rm Tr}\, d^p gg^* +c_{p, \lambda}{\rm Tr}\, d^p g^*g.$$
\end{proof}

To conclude, using the notation of Section 3, we apply it with 
$d=D_{\la 1,...,n\ra}^{\frac 1 p}$, $g=\gamma_n$, $\lambda=\mu_n^{\frac 4 p}$.
Recall that ${\rm Tr}\, d^{p}g^*g=\mu_n^{-2}$ and  ${\rm Tr}\, d^{p}gg^*=\mu_n^{2}$,
\begin{eqnarray*}
\|(1+\eps \gamma_n)D_{\la 1,...,n\ra}^{\frac 1 p}\|_p^p&=&1+\eps^2 \left(
\frac p{2\mu_n^2} - \frac {p \mu_n^{\frac 8 p}}{2\mu_n^2(\mu_n^{\frac 8 p}-1)}+
\frac {p\mu_n^2} {2 (\mu_n^{\frac 8 p}-1)}\right)+ O(\eps^2)\\
& =&  1 + \eps^2 \cdot \frac{p}{2\mu_n^2} \cdot \frac {\mu_n^4 -1}{\mu_n^{\frac 8p}-1}+ O(\eps^2).
\end{eqnarray*}
Then, the conclusion about the optimality follows as above.
\bibliographystyle{amsplain}
%\bibliography{hidden}
\providecommand{\bysame}{\leavevmode\hbox
to3em{\hrulefill}\thinspace}

\end{document}